\title[KL Convergence Guarantees for SGMs under minimal data assumptions]{KL Convergence Guarantees for Score diffusion models under minimal data assumptions}
\author{Giovanni Conforti}
\address{\'Ecole Polytechnique}
\curraddr{D\'epartement de Math\'ematiques Appliqu\'es,
Palaiseau, France.}
\email{giovanni.conforti@polytechnique.edu}
\thanks{GC acknowledges funding from the grant SPOT (ANR-20-CE40-
0014)}
\author{Alain Durmus}
\address{\'Ecole Polytechnique}
\curraddr{D\'epartement de Math\'ematiques Appliqu\'es,
Palaiseau, France.}
\email{alain.durmus@polytechnique.edu}
\thanks{AD would like
to thank the Isaac Newton Institute for Mathematical Sciences for support and hospitality during the programme
\emph{The mathematical and statistical foundation of future data-driven engineering} when work on this paper was undertaken. This work was supported by: EPSRC grant number
EP/R014604/1.}
\author{Marta Gentiloni Silveri}
\address{\'Ecole Polytechnique}
\email{marta.gentiloni-silveri@polytechnique.edu}
\newcommand{\tildes}{\tilde{s}}
\DeclareMathAlphabet{\mathpzc}{OT1}{pzc}{m}{it}
\newtheorem{theorem}{Theorem}
\crefname{theorem}{theorem}{Theorems}
\Crefname{Theorem}{Theorem}{Theorems}
\newtheorem{lemma}{Lemma}
\crefname{lemma}{lemma}{lemmas}
\Crefname{Lemma}{Lemma}{Lemmas}
\newtheorem{corollary}{Corollary}
\crefname{corollary}{corollary}{corollaries}
\Crefname{Corollary}{Corollary}{Corollaries}
\newtheorem{proposition}{Proposition}
\crefname{proposition}{proposition}{propositions}
\Crefname{Proposition}{Proposition}{Propositions}
\crefname{definition}{definition}{definitions}
\Crefname{Definition}{Definition}{Definitions}
\newtheorem{remark}{Remark}
\crefname{remark}{remark}{remarks}
\Crefname{Remark}{Remark}{Remarks}
\crefname{example}{example}{examples}
\Crefname{Example}{Example}{Examples}
\crefname{figure}{figure}{figures}
\Crefname{Figure}{Figure}{Figures}
\newtheorem{assumption}{\textbf{H}\hspace{-3pt}}
\Crefname{assumption}{\textbf{H}\hspace{-3pt}}{\textbf{H}\hspace{-3pt}}
\crefname{assumption}{\textbf{H}}{\textbf{H}}
\Crefname{assumptionAO}{\textbf{AO}\hspace{-3pt}}{\textbf{AO}\hspace{-3pt}}
\crefname{assumptionAO}{\textbf{AO}}{\textbf{AO}}
\Crefname{assumptionL}{\textbf{L}\hspace{-3pt}}{\textbf{L}\hspace{-3pt}}
\crefname{assumptionL}{\textbf{L}}{\textbf{L}}
\Crefname{assumptionA}{\textbf{A}\hspace{-3pt}}{\textbf{A}\hspace{-3pt}}
\crefname{assumptionA}{\textbf{A}}{\textbf{A}}
\Crefname{assumptionG}{\textbf{G}\hspace{-3pt}}{\textbf{G}\hspace{-3pt}}
\crefname{assumptionG}{\textbf{G}}{\textbf{G}}
\Crefname{assumptionAp}{\textbf{A'}\hspace{-3pt}}{\textbf{A'}\hspace{-3pt}}
\crefname{assumptionAp}{\textbf{A'}}{\textbf{A'}}
\def\msa{\mathsf{A}}
\def\mse{\mathsf{E}}
\def\mcf{\mathcal{F}}
\def\rset{\mathbb{R}}
\def\mrl{\mathrm{L}}
\def\rml{\mathrm{L}}
\def\rmd{\mathrm{d}}
\def\rme{\mathrm{e}}
\def\rmC{\mathrm{C}}
\def\mrC{\mathrm{C}}
\def\rmE{\mathrm{E}}
\newcommand{\Z}{\mathbb Z}
\newcommandx{\functionspace}[2][1=+]{\mathbb{F}_{#1}(#2)}
\newcommandx{\VarDeux}[3][3=]{\operatorname{Var}^{#3}_{#1}\left\{#2 \right\}}
\newcommand{\1}{\mathbbm{1}}
\newcommand{\LeftEqNo}{\let\veqno\@@leqno}
\newcommand{\floor}[1]{\left\lfloor #1 \right\rfloor}
\newcommand{\floorLigne}[1]{\lfloor #1 \rfloor}
\newcommand{\N}{\ensuremath{\mathbb{N}}}
\newcommand{\PE}{\mathbb{E}}
\newcommand{\PP}{\mathbb{P}}
\newcommand{\abs}[1]{\left\vert #1 \right\vert}
\newcommandx{\Vnorm}[2][1=V]{\| #2 \|_{#1}}
\newcommandx{\VnormEq}[2][1=V]{\left\| #2 \right\|_{#1}}
\newcommandx{\norm}[2][1=]{\ifthenelse{\equal{#1}{}}{\left\Vert #2 \right\Vert}{\left\Vert #2 \right\Vert^{#1}}}
\newcommandx{\normLigne}[2][1=]{\ifthenelse{\equal{#1}{}}{\Vert #2 \Vert}{\Vert #2\Vert^{#1}}}
\newcommand{\parenthese}[1]{\left(#1 \right)}
\newcommand{\parentheseDeux}[1]{\left[ #1 \right]}
\newcommand{\defEns}[1]{\left\lbrace #1 \right\rbrace }
\newcommand{\ps}[2]{\left\langle#1,#2 \right\rangle}
\newcommandx\probaMarkovTilde[2][2=]
\newcommand{\expe}[1]{\PE \left[ #1 \right]}
\newcommand{\plusinfty}{+\infty}
\def\ie{\textit{i.e.}}
\def\eqsp{\;}
\newcommand{\ocint}[1]{\left(#1\right]}
\newcommand{\ccint}[1]{\left[#1\right]}
\newcommand{\ccintLigne}[1]{[#1]}
\newcommandx{\weight}[2][2=n]{\omega_{#1,#2}^N}
\def\TV{\mathrm{TV}}
\newcommandx\sequence[3][2=,3=]
\newcommandx\sequenceD[3][2=,3=]
\newcommandx{\sequencen}[2][2=n\in\N]{\ensuremath{\{ #1_n, \eqsp #2 \}}}
\newcommandx\sequenceDouble[4][3=,4=]
\newcommandx{\sequencenDouble}[3][3=n\in\N]{\ensuremath{\{ (#1_{n},#2_{n}), \eqsp #3 \}}}
\def\iid{\text{i.i.d.}}
\newcommand{\opnorm}[1]{{\left\vert\kern-0.25ex\left\vert\kern-0.25ex\left\vert #1 
    \right\vert\kern-0.25ex\right\vert\kern-0.25ex\right\vert}}
\def\Id{\operatorname{Id}}
\newcommandx{\CPE}[3][1=]{{\mathbb E}_{#1}\left[\left. #2 \, \middle \vert \, #3 \right. \right]} 
\newcommandx{\CPELigne}[3][1=]{{\mathbb E}_{#1}[\left. #2 \,  \vert \, #3 \right. ]} 
\newcommandx{\CPVar}[3][1=]{\mathrm{Var}^{#3}_{#1}\left\{ #2 \right\}}
\newcommand{\CPP}[3][]
{\ifthenelse{\equal{#1}{}}{{\mathbb P}\left(\left. #2 \, \right| #3 \right)}{{\mathbb P}_{#1}\left(\left. #2 \, \right | #3 \right)}}
\newcommandx{\osc}[2][1=]{\mathrm{osc}_{#1}(#2)}
\def\Id{\operatorname{Id}}
\def\Ltwo{\mathrm{L}^2}
\def\transpose{\operatorname{T}}
\def\tT{\tilde{T}}
\newcommand\coupling[2]{\Gamma(\mu,\nu)}
\renewcommand{\geq}{\geqslant}
\renewcommand{\leq}{\leqslant}
\def\Leb{\mathrm{Leb}}
\def\Ltt{\mathtt{L}}
\def\Mtt{\mathtt{M}}
\newcommand{\wasserstein}{\mathscr{W}}
\def\wiener{\mathbb{W}}
\newcommand{\txts}{\textstyle}
\def\rmL{\mathrm{L}}
\def\bfo{\mathbf{o}}
\newcommandx{\Voi}[1][1=i]{\mathfrak{V}_{\bfo,#1}}
\newcommandx{\Vlyapc}[2][1=\bfo,2=i]{\mathfrak{V}_{#1,#2}}
\def\Wlyap{\mathfrak{W}}
\def\bfomega{\boldsymbol{\omega}}
\newcommandx{\Woi}[1][1=i]{\Wlyap_{\bfomega,#1}}
\newcommandx{\Wlyapc}[2][1=\bfomega,2=i]{\Wlyap_{#1,#2}}
\def\g{g}
\def\bfA{\mathbf{A}}
\newcommand{\app}{X^{\theta^{\star}}}
\newcommand{\appl}{V^{\theta^{\star}}}
\newcommand{\bbE}{\mathbb{E}}
 \newcommand{\foX}{\overrightarrow{X}}
 \newcommand{\foV}{\overrightarrow{V}}
 \newcommand{\baX}{\overleftarrow{X}}
 \newcommand{\baV}{\overleftarrow{V}}
 \newcommand{\mustar}{\mu_{\star}}
 \newcommand{\muprior}{\mu_0}
 \newcommand{\baP}{\overleftarrow{P}}
 \newcommand{\fom}{\overrightarrow{p}}
 \newcommand{\ptilde}{\Tilde{p}}
 \newcommand{\KL}{\mathrm{KL}}
 \newcommand{\fisher}{\mathscr{I}}
\newcommand{\thetastar}{\theta^{\star}}
\newcommand{\gY}{g}
\newcommand{\sbp}{P^{\theta^{\star}}}
\newcommand{\applaw}{p^{\theta^{\star}}}
\def\wiener{\mathbb{W}}
\def\Fr{\mathrm{Fr}}
\def\bbf{\mathbf{b}}
\def\bfSigma{\boldsymbol{\Sigma}}
\def\rmE{\mathrm{E}}
\def\Mtt{\mathtt{M}}
\def\thetas{\theta^{\star}}
\def\div{\operatorname{div}}
\def\mfrZ{\mathfrak{Z}}
\begin{document}

\maketitle

\begin{abstract}
    Diffusion models are a new class of generative models that revolve around the estimation of the score function associated with a stochastic differential equation. Subsequent to its acquisition, the approximated score function is then harnessed to simulate the corresponding time-reversal process, ultimately enabling the generation of approximate data samples. Despite their evident practical significance these models carry, a notable challenge persists in the form of a lack of comprehensive quantitative results, especially in scenarios involving non-regular scores and estimators. In almost all reported bounds in Kullback Leibler (KL) divergence, it is assumed that either the score function or its approximation is Lipschitz uniformly in time. However, this condition is very restrictive in practice or appears to be difficult to establish. 
    
    To circumvent this issue, previous works mainly focused on establishing convergence bounds in KL for an early stopped version of the diffusion model and a smoothed version of the data distribution, or assuming that the data distribution is supported on a compact manifold. These explorations have led to interesting bounds in either Wasserstein or Fortet-Mourier  metrics. However, the question remains about the relevance of such early-stopping procedure or compactness conditions. In particular, if there exist a natural and mild condition ensuring explicit and sharp convergence bounds in KL.
   
    In this article, we tackle the aforementioned limitations by focusing on score diffusion models with fixed step size stemming from the Ornstein-Uhlenbeck semigroup and its kinetic counterpart. Our study provides a rigorous analysis, yielding simple, improved and sharp convergence bounds in KL applicable to any data distribution with finite Fisher information with respect to the standard Gaussian distribution. 
\end{abstract}



\section{Introduction}
Over the past few years, deep generative models (DGMs) have emerged as a thriving research field in artificial intelligence \cite{salakhutdinov2015learning,ruthotto2021introduction} owing to their remarkable performance. Roughly speaking, generative modeling consists in
learning a map capable of generating new data instances that resemble a given set of observations, starting from a simple prior distribution, most often a standard Gaussian distribution.
 Successfully trained DGMs have the capability to approximate complex and high-dimensional probability distributions \cite{goodfellow2020generative, kingma2014stochastic, zhao2016energy, papamakarios2021normalizing} and can serve as a proxy for the data likelihood \cite{turner2019metropolis}.

The applications of DGMs are diverse and widespread, spanning statistical physics and computational chemistry \cite{carleo2019machine, brehmer2020madminer, noe2019boltzmann}, medicine \cite{sandfort2019data}, meteorology \cite{ravuri2021skilful}, reinforcement learning \cite{ho2016generative, houthooft2016curiosity},  and inverse problems \cite{song2020score}. As a result, the literature on DGMs is rapidly growing. 

Score-based diffusion generative models (SGMs) are now one of the essential pillars in generative modeling,  \cite{NEURIPS2020_4c5bcfec, dhariwal2021diffusion, song2020score, sohl2015deep}, playing a pivotal role in the most recent theoretical and practical achievements. The basic idea behind SGMs is to sample from the time-reversal of a diffusion process in order to convert noise into new data instances. In their initial step, these models construct an estimator of the score function of an ergodic (forward) diffusion process over a fixed time window $[0,T]$. After learning the score function, the second step consist in simulating the trajectories of the time-reversal of the forward process. In order to make this step computationally feasible, a suitable time-discretization of the score is introduced and the backward process is initialized at the invariant distribution of the ergodic process, whose samples are typically much easier to obtain than samples from the marginals of the forward process. The final outcome of this approximate time-reversal diffusion yields samples that are expected to be good approximations of the data distribution. 

The impressive empirical performances of SGMs have motivated the development of an intense research activity aiming at quantifying how the various sources of error (score approximation, time-discretization and initialization) affect the quality of the returned samples, thus providing with a theoretical framework that justifies the success of SGMs in applications. Specifically, there has been a growing interest in understanding the generative sampling phase (\ie, the second step described earlier) and providing theoretical guarantees on its effectiveness.
However, despite remarkable and very recent progresses \cite{chen2022sampling,chen2022improved,block2020generative,de2021diffusion,de2022convergence,lee2022convergence,lee2023convergence,wibisono2022convergence} in developing a mathematical theory for diffusion models, there is currently no comprehensive quantitative result that provides a priori bounds on the discrepancy  between the generated samples and the data distribution without relying on smoothness assumptions on the score function or its estimator, in particular a Lipschitz type condition. On the other hand, if one accepts to introduce an early stopping rule that consists in running the approximated backward process up to time $T-\delta$, some of above mentioned works have shown that, under minimal assumptions on the data, it is possible to quantify the discrepancy between the returned samples and the law of the forward process at time $\delta$, that may be regarded as a noised (smoothed) version of the data distribution: we refer to \Cref{sec:comparison} for a more thorough review of the existence literature and comparisons with the results of this article.

\subsection*{Our contribution} In this work we analyze the performances of two popular families of score based diffusion models obtained by considering as a forward process either the Ornstein-Uhlenbeck (OU) diffusion or its kinetic counterpart (kOU) under different assumptions on the data distribution. To generate the approximate time-reversal diffusion, we consider the exponential integrator Euler-Maruyama discretization scheme with constant step size, a scheme that has been widely considered in existing research on the subject. 

In our main contribution (see \Cref{theo_absolute_OU_just_fisher} and \Cref{theo_absolute_kOU}) we establish explicit, simple, and sharp bounds on the Kullback-Leibler divergence between the data distribution and the law of SGM both in the overdamped and kinetic setting. We achieve this under the sole assumptions of an $\rml^2$-score approximation error and that data distribution has finite relative Fisher information with respect to the standard Gaussian distribution. We remark that previous results that do not require the introduction of an early stopping rule and/or exponential decreasing step sizes    \cite{lee2022convergence,lee2023convergence,chen2022improved} were obtained assuming either the data distribution is supported on a bounded manifold or that the score (or its approximation) is Lipschitz regular,  uniformly over the sampling interval $[0,T]$. Note that requiring finite Fisher information amounts to a mere integrability assumption on the score of the data distribution. 
Finally, our bounds are sharp in the sense that if the data distribution is the standard Gaussian distribution, the only term appearing in our bounds corresponds to the approximation of the score function of the diffusion process.
Moreover, they match and often improve the accuracy of previously obtained bounds.

In addition we show that replacing classical assumptions on the absolute $\rml^2$-score approximation error with natural assumptions on the relative $\rml^2$-score approximation error leads to a substantial improvement of all the above mentioned results, see  \Cref{theo_relative_OU_just_fisher}, \Cref{theo_relative_kOU}. To the best of our knowledge, such an assumption is new in the literature about SGMs. As pointed out above, we refer the reader to \Cref{sec:comparison} for a detailed comparison between our findings and the existing literature on SGMs.
Our approach is characterized by the introduction of a stochastic control perspective, \ie , by the interpretation of the backward process as the solution of a stochastic control problem. The control-theoretic (or variational) interpretations of densities of diffusion processes we made use of is, by now, quite common and exploited for different purposes, spanning nonequilibrium thermodynamical systems \cite{pavon1989stochastic}, parabolic PDEs \cite{fleming1985stochastic} (\cite{fleming1985stochastic} deals also with the issue of early stopping via the so-called penalty method in the theory of optimal control), functional inequalities \cite{lehec2013representation} and variational characterizations of Langevin diffusions \cite{karatzas2022variational}. Furthermore, in the contest of GMs,  such perspective has already been successfully implemented in \cite{tzen2019theoretical} and \cite{coleman2021sampling} to propose and analyze a probabilistic generative model which share similarities with the usual SGMs that we consider in this work. In contrast to \cite{tzen2019theoretical}, our focus lies in investigating the dynamics of the \emph{relative score process},  see \eqref{eq:def_Y_t} below. We interpret this process as a solution of the adjoint equation within a stochastic maximum principle (SMP) for the aforementioned control problem. A similar discussion has been prompted also by \cite{coleman2021sampling} but to tackle the problem of statistical inference, rather than convergence. By adopting this approach and by leveraging a standard (\cite{chen2022sampling}, \cite{chen2022improved}) decomposition of the KL divergence (see \eqref{general_KL_decomposition} below), we are able to derive accurate convergence bounds for SGMs avoiding any consideration on the regularity of the derivative of the score. Moreover, we are able to steer away from any representation of the score derivative in terms of conditional covariance matrices, a key ingredient in most recent approaches \cite{chen2022improved,chen2022sampling}.
Moreover, our stochastic control approach unveils an interesting connection between the convergence analysis of SGMs and Bakry \'Emery theory \cite{bakry2014analysis}. This connection prompts a natural conjecture that our results extend to more general choices of forward processes. This includes diffusions with non-linear drifts and diffusions on Riemannian manifolds.  We leave the verification of these conjectures to future work. 

\subsection*{Notation}
Given a measurable space $(\mse, \mathcal{F})$, we denote by $\mathcal{P}(\mse)$ the set of probability measures on $\mse$. Also, given a topological space $(\mse, \tau)$, we use $\mathcal{B}(\mse)$ to denote the Borel $\sigma$-algebra on $\mse$. Given $T>0$, we denote by $\wiener_T := \mrC([0,T], \rset^d)$ the space of continuous functions from $[0,T]$ to $\rset^d$. Recall that $(\wiener_T, \norm{\cdot}_\infty)$, with $\norm{\omega}_\infty = \sup_{t \in \ccint{0,T}} \abs{\omega_t}$ for $\omega \in \wiener_T$, is a complete separable normed space often referred to as the Wiener space. 
Let $(X_t)_{t \in [0,T]}$ be a continuous stochastic process. We denote by $P_{[0,T]}^X$ the distribution induced by this process on $\wiener_T$. We denote by $\gamma^d$ the density of the standard Gaussian distribution on $\rset^d$. With abuse of notation, we identify the standard Gaussian measure with its density. Also, we denote by $\Id_d$ the identity matrix of order $d$. Given two vectors $x, y\in \rset^d$, we denote by $\norm{x}$ and $x\cdot y$ respectively the Euclidean norm of $x$ and the canonical scalar product between $x$ and $y$. Also we write $x\lesssim y$ (resp. $x \gtrsim y$) to mean $x\le C y$ (resp. $x\ge C y$) for a universal constant $C>0$. 
Given two probability measures $\mu, \nu \in \mathcal{P}(\mse)$, the relative entropy (or $\KL$-divergence) of $\mu$ with respect to $\nu$ is defined by $\KL(\mu |\nu) := \int \log (\rmd \mu /\rmd \nu) \rmd\mu$ if $\mu$ is absolutely continuous with respect to $\nu$, and $\KL(\mu |\nu) := +\infty$ otherwise. 
If $\mse=\rset^d$, the Fisher information of $\mu$ with respect to $\nu$ is defined by $\fisher(\mu |\nu) := \int \norm{\nabla\log (\rmd \mu /\rmd \nu)}^2\rmd\mu$ if $\mu$ is absolutely continuous with respect to $\nu$ and $\log \rmd \mu/\rmd \nu$ belongs to the Sobolev space of order two \cite{hajlasz1996sobolev} associated with $\nu$ , and $\fisher(\mu |\nu) := +\infty$ otherwise. We also denote by $\Mtt^2_2$ the second moment of a probability measure $\mu\in\mathcal{P}(\mathbb{R}^d)$, \ie, $\Mtt^2_2:= \int \norm{x}^2\mu(\rmd x)$. We denote by $\Pi(\mu,\nu)$ the set of couplings between $\mu$ and $\nu$, \ie , 
  $\xi\in\Pi(\mu,\nu)$ if and only if $\xi$ is a probability measure on $\rset^d\times\rset^d$ and $\xi(\msa \times \rset^d) = \mu(\msa)$ and $\xi(\rset^d\times \msa) = \nu(\msa)$ for all measurable $\msa \subseteq \rset^d$.  If $\mu$ and $\nu$ have finite second moment, the $2-$Wasserstein distance is defined by $\wasserstein_2^2 (\mu, \nu) := \inf_{\xi \in \Pi(\mu,\nu)} \int \norm{x-y}^2 \rmd \xi(x,y)$. 
  Given a matrix $\mathbf{A}\in \rset^{d\times d}$, the Frobenius norm of $\mathbf{A}$ is given by $\norm{\mathbf{A}}_{\Fr}:= \sqrt{\sum_{i,j=1}^{d} |\mathbf{A}_{ij}|^2}=\sqrt{\text{Tr}(\mathbf{A} \cdot \mathbf{A}^{\top})}$. 
For $T \geq 0$ and $F: \ccint{0,T}\times \rset^d\to \rset^d$, $f :\ccint{0,T}\times  \rset^d \to \rset$ regular enough,  we denote by $\div, \Delta$ the divergence and Laplacian operators with respect to the space variable $x$, \ie,  $\div F := \sum_{i=1}^d \partial_{x_i} F_i$, $\Delta f := \sum_{i=1}^d \partial_{x_i}^2 f$, where $\partial_{x_i}$ denotes the partial derivative with to $x_i$.

  \section{Main results}
  
  \subsection{Score generative models}
  \label{sec:score-gener-models}
  In this section, we provide a brief summary of the ideas behind the construction of SGMs based on diffusion \cite{song2020score}. Subsequently, we introduce the settings we will be working on.
  
  Denote by $\mustar \in \mathcal{P}(\mathbb{R}^d)$ the data distribution. The first building block in SGMs is to consider a $d$-dimensional ergodic diffusion on $\ccint{0,T}$, for a fixed time horizon $T>0$, that is a stochastic differential equation (SDE) of the form
\begin{equation}
  \label{eq:diffusion_forward_general}
    \rmd\foX_t=\bbf(\foX_t)\rmd t+\bfSigma \rmd B_t \eqsp, \quad t\in [0,T]\eqsp,
  \end{equation}
  where $\bbf : \rset^d \to \rset^d$ is a drift function,
  $\bfSigma \in \rset^{d \times d}$ is a fixed covariance matrix (\ie , a
  symmetric and semi-definite positive matrix) and $(B_t)_{t \geq 0}$ is a
  $d$-dimensional Brownian motion.  Under mild assumptions on $\bbf$,
  \eqref{eq:diffusion_forward_general} admits unique strong solutions
  and is associated to a Markov semigroup $(P_t)_{t \geq 0}$ with a unique stationary distribution $\muprior$ defined for any $x \in\rset^d$, $\msa \in \mathcal{B}(\rset^d)$ and
  $t >0$, by $P_t(x,\msa) = \PP(\foX_t^x \in\msa)$, where $(\foX_t^x)_{t \geq 0}$
  is the solution of \eqref{eq:diffusion_forward_general} starting
  from $x$. Common choices of forward processes are either the Brownian motion\footnote{Note that this process does not admit a stationary distribution but $\mu_0$ in practice is replaced by a uniform distribution on an appropriate region or a Gaussian distribution with large variance \cite{song2019generative}}  or the Ornstein-Uhlenbeck process, where $\bbf$ is either $0$ or $\Id/2$ respectively, and $\bfSigma=\Id$.
  
  The second step of SGM is to initialize \eqref{eq:diffusion_forward_general} at the data distribution $\mustar$. This means setting $\foX_0 $ to have the distribution $\mustar$, and considering the family of probability
  measures  $\{\mustar P_t \, :\, t \in \ccint{0,T}\}$ with corresponding marginal time densities (with respect to the Lebesgue measure) $\{\fom_t\, :\, t\in \ccint{0,T}\}$, \ie , $\rmd \mustar P_t/\rmd \Leb = \fom_t$ for any $t \in \ccint{0,T}$, which exist under appropriate conditions on $\bbf$ and $\bfSigma$ \cite{kusuoka2010existence}.

  Remarkably, as shown in \cite{anderson1982reverse,follmer2005entropy}, \eqref{eq:diffusion_forward_general} admits a time-reversal process, in the sense that the SDE   defined by
\begin{equation}
  \label{eq:diffusion_backward_general}
    \rmd \baX_t=(-\bbf(\baX_t) + \bfSigma\bfSigma^T \nabla \log \fom_{T-t}(\baX_t)) \rmd t+\bfSigma \rmd B_t \eqsp, \quad t\in [0,T] \eqsp,
  \end{equation}
  admits a weak solution  $(\baX_t)_{t\in\ccint{0,T}}$ with initial distribution $\mustar P_T$, such that $\foX_t$ and $\baX_{T-t}$ have the same distribution, for any $t\in \ccint{0,T}$. Rigorously speaking, the Brownian motion driving \eqref{eq:diffusion_backward_general} is different from $(B_t)_{t\geq 0}$ and can be explicitly characterized \cite[Remark 2.5]{haussmann1986time}. However, since we only deal with the distribution of $(\baX_t)_{t\in\ccint{0,T}}$ (rather than its actual random trajectory), for sake of simplicity, we identify the two.
Therefore, the last step of SGM involves following the SDE \eqref{eq:diffusion_backward_general} with $\baX_0$ initialized at $\mustar P_T$, resulting in a sample from the data distribution.

When implementing these ideas in practice, three computational challenges arise:

\begin{enumerate}[label=(\alph*),wide, labelwidth=!, labelindent=0pt]
\item  One cannot obtain \iid~samples from $\mustar P_T$. As a solution, samples from the
  stationary distribution $\muprior$ of
  \eqref{eq:diffusion_forward_general} are used instead

\item The score of the forward process, $\nabla \log \fom_{T-t} (x)$, which appears in \eqref{eq:diffusion_backward_general}, is intractable. To address this, an estimator $s_{\thetastar}$ is learned based on a family of parameterized functions $\{(t,x)\mapsto s_\theta(t,x)\}_{\theta\in \Theta}$ parameterized by $\Theta$, aiming at approximating the score. The parameter $\thetastar$ is typically determined by optimizing a discretized version of the score-matching objective:
\begin{equation}
\label{score-matching-objective}
\theta \mapsto \int_{0}^{T} \mathbb{E}\left[\norm{s_\theta(t,\foX_t) - \bfSigma \bfSigma^T\nabla \log \fom_{t}(\foX_t)}^2\right] \mathrm{d} t\eqsp,
\end{equation}
such that the $\rml^2$ estimation error is minimized
\item Once approximations for $\mustar P_T$ and the score are obtained, the continuous dynamics can be simulated using various discretization schemes. A common choice is the Euler-Maruyama (EM) discretization scheme. For a choice of sequence of step sizes $\{h_k\}_{k=1}^N$, $N \geq 1$, and the corresponding time discretization $t_k= \sum_{i=1}^{k} h_i $, such that $t_N = T$, it
defines the continuous process $(X^{\rmE}_t)_{t \in\ccint{0,T}}$
  recursively on the intervals $[t_k, t_{k+1}]$ by
  $$\rmd X^{\rmE}_t=\{-\bbf(X^{\rmE}_{t_k}) + s_{\thetastar}(T-t_k,X^{\rmE}_{t_k})\} \rmd t+\bfSigma \rmd B_t \eqsp, \quad t\in [t_k, t_{k+1}]\eqsp, \quad \text{with}\quad X^{\rmE}_0 \sim \muprior \eqsp. $$
  An alternative discretization scheme considered in this work is the stochastic Euler Exponential Integrator (EI) \cite{durmus2015quantitative}. It defines a process $(\app_t)_{t \in \ccint{0,T}}$ approximating \eqref{eq:diffusion_backward_general} recursively on intervals $[t_k, t_{k+1}]$, as the solution to the SDE:
  \begin{equation}
    \label{eq:EI_v0}
    \rmd\app_t=\{-\bbf(\app_{t}) + s_{\thetastar}(T-t_k,\app_{t_k})\} \rmd t+\bfSigma \rmd B_t, \quad t\in [t_k, t_{k+1}]\eqsp, \quad \text{with}\quad  \app_0 \sim \muprior \eqsp,\end{equation}
It turns out for some choices of $\bbf$ that this SDE can be exactly solved. In particular, in the sequel, we only consider linear drift functions $\bbf$, for which it is the case. 
\end{enumerate}

\subsection{Convergence bounds for OU-based SGM}

First, we focus on the Ornstein -Uhlenbeck case by taking $\bbf(x)= -x$ and $\bfSigma=\sqrt{2}\Id$. In this case $\muprior$ is the standard Gaussian distribution denoted by $\gamma^d$, and \eqref{eq:diffusion_forward_general} and \eqref{eq:diffusion_backward_general} turn respectively into 
\begin{equation}\label{FE}
    \rmd \foX_t=-\foX_t \rmd t+\sqrt{2} \rmd B_t\eqsp, \quad t\in [0,T]\eqsp, \quad \foX_0 \sim \mustar \eqsp,
  \end{equation}
  and
  \begin{equation}\label{BE_v0}
    \rmd\baX_t=\{\baX_t + 2 \nabla \log \fom_{T-t}(\baX_t)\}\rmd t+\sqrt{2} \rmd B_t \eqsp, \quad t\in [0,T]\eqsp,\quad \baX_0 \sim \mustar P_T \eqsp.
  \end{equation}
Keeping for simplicity, the same notation as in \Cref{sec:score-gener-models}, $(P_t)_{t \in\ccint{0,T}}$ and $\{\fom_t \,:\, t \in \ccint{0,T}\}$ denote in particular the semigroup and the marginal densities associated with \eqref{FE} respectively.
  A simple observation that will play a key role in our analysis is that since $\nabla\log \gamma^d(x) = -x$, \eqref{BE_v0} can be rewritten in the following equivalent form:
\begin{equation}\label{BE}
    \rmd\baX_t=(-\baX_t + 2 \nabla \log \ptilde_{T-t}(\baX_t)) \rmd t+\sqrt{2} \rmd B_t \eqsp, \quad t\in [0,T]\eqsp,\quad \baX_0 \sim \mustar P_T \eqsp,
  \end{equation}
  where for any $(t,x)\in\ccint{0,T}\times \rset^d$, $\ptilde_t$ is the density of the law of $\foX_t$ against the Gaussian distribution, \ie ,
\begin{equation}\label{def-ptilde}
\ptilde_t(x)= \fom_t(x)/\gamma^d(x)\eqsp.
\end{equation}
This viewpoint on time-reversal has indeed already been fruitfully employed in \cite{cattiaux2021time} to give a rigorous meaning to the backward SDE \eqref{eq:diffusion_backward_general} under minimal regularity assumptions. In the context of this article, the key observation is that if we define the relative score process for $t\in[0,T]$,
\begin{equation}
Y_t = 2\nabla \log \ptilde_{T-t}(\baX_t)\eqsp,
\end{equation}
then the It\^o differential of $(Y_t)_{t\in[0,T]}$ can be computed in explicit form, see \Cref{PMP} below.

In this article, we shall consider SGMs that generate approximate trajectories of the backward process based on its representation \eqref{BE}, thus slightly deviating from the standard literature on SGMs that relies on the representation \eqref{BE_v0}. Note however that this comes at no extra computational cost. The resulting algorithm is then obtained following exactly the same numerical procedure that leads to \eqref{eq:EI_v0}. However, let us nevertheless proceed to give a full description of the algorithm considered here for the sake of clarity. To do so, we start by considering a parametric family $\{\tildes_{\theta}\}_{\theta \in\Theta}$ for the relative score function 
$(t,x) \mapsto \nabla \log \ptilde_t(x)$. Since $\nabla \log \ptilde_t(x) = x + \nabla \log \fom_t(x)$ for any $x\in\rset^d$ and $t\in[0,T]$, this family can be constructed explicitly from a family of estimators  $\{s_{\theta}\}_{\theta \in\Theta}$ as considered in \eqref{score-matching-objective}.
   Then, for a learned parameter $\thetastar$, a sequence of step sizes $\{h_k\}_{k=1}^N$, $N \geq 1$, such that $\sum_{k=1}^N h_k = T$, the resulting
  OU-based SGM is then described by: $\app_0 \sim \gamma^d$ and for
  $k\in \{0,\ldots,N-1\}$,
\begin{equation}\label{eq:SGM-OU-Based}
\rmd \app_t=(-\app_{t} + \tildes_{\thetastar}(T-t_k,\app_{t_k})) \rmd t+\sqrt{2} \rmd B_t\eqsp, \quad t\in [t_k, t_{k+1}]\eqsp,
\end{equation}
where we recall that $\{t_k\}_{k=0}^N$ are defined by $t_0=0$ and the recursion  $t_{k+1} = t_k + h_{k+1}$. Let us now proceed to the statement of our main results for the OU-based SGM. As explained above we consider two types of assumption on the $\rml^2$-score approximation error. The first one is an assumption on the absolute error.
\begin{assumption}
  \label{ass:estimation-absolute}
There exist $\varepsilon^2 >0$ and $\thetastar \in \Theta$ such that
     \begin{equation}\label{ass}
\frac{1}{T}\sum_{k=0}^{N-1} h_{k+1}\mathbb{E} \left[\norm{\tildes_{\thetastar}(T-t_k,\foX_{T-t_k})- 2\nabla \log \ptilde_{T-t_{k}}(\foX_{T-t_k})}^2\right] \le \varepsilon^2 \eqsp,
      \end{equation}
      where $(\foX_t)_{t\in[0,T]}$ is the processes defined in \eqref{FE} and $\ptilde$ is given by \eqref{def-ptilde}.
    \end{assumption}
Note that, recalling the definition of $\ptilde_t$, the condition \eqref{ass} is equivalent to the same condition on the original score function, namely if one is able to construct an estimator $s_{\thetastar}$ such that for any $k\in\{0,...,N-1\}$, 
 \begin{equation}\label{ass_v0}
         \mathbb{E} \left[\norm{s_{\thetastar}(T-t_k,\foX_{T-t_k})- 2\nabla \log \fom_{T-t_{k}}(\foX_{T-t_k})}^2\right] \le \varepsilon^2\eqsp,
\end{equation}
then this gives also an estimator $\tildes_{\theta*}$ satisfying \eqref{ass}. Assumptions of the type \eqref{ass_v0} have already been considered in most analyses of SGM; see e.g., \cite{lee2022convergence,lee2023convergence,chen2022sampling}. In addition, as shown in \cite[Appendix A]{chen2022improved}, \Cref{ass:estimation-absolute} is satisfied in some simple scenarios.

Note that in practice, we do not have access to the function
\begin{equation}
  \label{eq:12}
  \theta \mapsto \frac{1}{T}\sum_{k=0}^{N-1} h_{k+1}\mathbb{E} \left[\norm{\tildes_{\theta}(T-t_k,\foX_{T-t_k})- 2\nabla \log \ptilde_{T-t_{k}}(\foX_{T-t_k})}^2\right] \eqsp,
\end{equation}
 but only to an empirical version of this function based on \iid~samples from $\mu^{\star}$. This raises an additional complexity level in the analysis that is out of the scope of the paper. Nevertheless, one possible direction is to rely on the new developments introduced in \cite{oko2023diffusion}. In particular, under appropriate conditions and for a well-chosen class of neural networks, \cite[Theorem 4.3]{oko2023diffusion} essentially shows that the second moment of the difference between the minimizer of the empirical loss function and the true score can be bounded as $n_{\text{s}}^{\frac{-2s}{d+2s}}$, where $n_{\text{s}}$ is the number of available samples from $\mu^{\star}$ and $s$ is a parameter associated to the smoothness of the density of $\mu^{\star}$ with respect to the Lebesgue measure. Therefore, we may expect that $\varepsilon$ in \Cref{ass:estimation-absolute} is of the same order with respect to $n_{\text{s}}^{\frac{-2s}{d+2s}}$, at least.

In order to be able to compare the law at time $T$ of \eqref{eq:SGM-OU-Based} with the data distribution we require finite relative Fisher information, \ie , $\rml^2$-integrability of the score of the data distribution.
    \begin{assumption}
  \label{ass:hyp_on_mustar}
$\mustar$ is absolutely continuous with respect to the Gaussian measure $\gamma^d$ and has finite relative Fisher information against $\gamma^d$, that is 
    \begin{equation}\label{fish}
        \fisher(\mustar|\gamma^d)= \int \norm{\nabla \log \left( \frac{\rmd \mustar}{\rmd \gamma^d}\right)}^2 \rmd \mustar < +\infty\eqsp.
    \end{equation}
\end{assumption}
Note that under \Cref{ass:hyp_on_mustar}, the Kullback Leibler divergence of $\mustar$ with respect to $\gamma^d$ is finite since $\gamma^d$ satisfies a log-Sobolev inequality $\KL(\mustar | \gamma^d) \lesssim \fisher(\mustar|\gamma^d)$; see \cite[Proposition 5.5.1]{bakry2014analysis}. This implies in turn that $\mustar$ has finite second order moment applying the Donsker Varadhan representation of the Kullback Leibler divergence. As a result, we denote by
\begin{equation}
  \label{eq:defMtt_2}
  \Mtt_2^2 = \int \norm{x}^2 \rmd \mustar(x) \eqsp.
\end{equation}

Equipped with these two conditions, we now state our first result.

\begin{theorem}\label{theo_absolute_OU_just_fisher}
Let $T\geq1,h\leq1$ and  assume  \Cref{ass:estimation-absolute}-\Cref{ass:hyp_on_mustar}. Consider the EI scheme $(\app_t)_{t\in [0,T]}$ with constant step size $h>0$ defined by \eqref{eq:SGM-OU-Based}. Denoting for any $t \in [0,T]$ by $\applaw_t$ the distribution of $\app_{t}$ we have that
    \begin{equation} \label{i}
      \KL(\mustar|\applaw_T)\lesssim \rme^{-2T} \KL(\mustar|\gamma^d) + \mathbf{C}(T,\varepsilon) + h \fisher(\mustar|\gamma^d)\eqsp,
    \end{equation}
    where $\mathbf{C}(T,\varepsilon) = T \varepsilon^2$. Moreover, the bound \eqref{i} also holds if we replace the term $\KL(\mustar|\gamma^d)\rme^{-2T}$ with $ (\Mtt_2^2+d)\rme^{-T}$. In particular, choosing 
    \begin{equation}\label{TandNfinitefishabserr}
    T = (1/2)\log((\Mtt_2+d)/\varepsilon^2), \quad N= (T \mathscr{I}(\mustar|\gamma^d))/\varepsilon^2
    \end{equation}
    makes the approximation error $\tilde{O}(\varepsilon^2)$, where the notation $\tilde{O}$ indicates that logarithmic factors of $d$ and $\varepsilon$
have been dropped.
  \end{theorem}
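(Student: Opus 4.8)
The plan is to combine a path-space comparison via Girsanov's theorem with the It\^o dynamics of the relative score process. Since $\mustar$ is the law of $\baX_T$, where $(\baX_t)_{t\in[0,T]}$ is the time-reversal \eqref{BE} (whose rigorous construction under \Cref{ass:hyp_on_mustar} follows the viewpoint of \cite{cattiaux2021time}), and $\applaw_T$ is the law of $\app_T$, the data-processing inequality applied to the evaluation map $\omega\mapsto\omega_T$ on $\wiener_T$ gives $\KL(\mustar|\applaw_T)\le \KL(P^{\baX}_{[0,T]}|P^{\app}_{[0,T]})$. Because the exponential integrator integrates the linear part $-x$ of the drift exactly, the two path laws differ only through the drift mismatch $2\nabla\log\ptilde_{T-t}(\baX_t)-\tildes_{\thetastar}(T-t_{k(t)},\baX_{t_{k(t)}})$, where $t_{k(t)}$ denotes the largest grid point $\le t$; Girsanov together with the chain rule for relative entropy then yields
\begin{equation*}
  \KL(\mustar|\applaw_T)\le \KL(\mustar P_T|\gamma^d)+\tfrac14\,\mathbb{E}\!\left[\int_0^T\bigl\|2\nabla\log\ptilde_{T-t}(\baX_t)-\tildes_{\thetastar}(T-t_{k(t)},\baX_{t_{k(t)}})\bigr\|^2\,\rmd t\right].
\end{equation*}
The first term is a pure mixing contribution of the forward OU semigroup; the second collects the score and discretization errors.

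For the mixing term I would use that $\gamma^d$ satisfies a log-Sobolev inequality with constant $1$: writing $\KL(\mustar P_t|\gamma^d)=\Ent{\gamma^d}{\ptilde_t}$ and using the de Bruijn identity $\tfrac{\rmd}{\rmd t}\Ent{\gamma^d}{\ptilde_t}=-\fisher(\mustar P_t|\gamma^d)\le-2\,\Ent{\gamma^d}{\ptilde_t}$, Gr\"onwall gives $\KL(\mustar P_T|\gamma^d)\le\rme^{-2T}\KL(\mustar|\gamma^d)$. The alternative bound $(\Mtt_2^2+d)\rme^{-T}$ follows instead from the explicit identity $\mustar P_T=\int\gauss(\rme^{-T}x_0,(1-\rme^{-2T})\Id_d)\,\mustar(\rmd x_0)$, joint convexity of $\KL$ in its first argument, and the closed form of $\KL$ between Gaussians. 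For the error term, the triangle inequality splits the integrand into a discretization part $\tfrac12\mathbb{E}\int_0^T\|Y_t-Y_{t_{k(t)}}\|^2\,\rmd t$ with $Y_t=2\nabla\log\ptilde_{T-t}(\baX_t)$, and a score part $\tfrac12\mathbb{E}\int_0^T\|Y_{t_{k(t)}}-\tildes_{\thetastar}(T-t_{k(t)},\baX_{t_{k(t)}})\|^2\,\rmd t$. Since the time-reversal identity gives $\baX_{t_k}\overset{d}{=}\foX_{T-t_k}$ and the score part is piecewise constant in $t$, it equals $\tfrac12\sum_k h_{k+1}\,\mathbb{E}\|2\nabla\log\ptilde_{T-t_k}(\foX_{T-t_k})-\tildes_{\thetastar}(T-t_k,\foX_{T-t_k})\|^2$, hence is at most $\tfrac12 T\varepsilon^2$ by \Cref{ass:estimation-absolute}.

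The heart of the argument is the discretization part, and this is where \Cref{PMP} is used: it provides the It\^o representation $\rmd Y_t=Y_t\,\rmd t+\sqrt2\,Z_t\,\rmd B_t$ with $Z_t=2\nabla^2\log\ptilde_{T-t}(\baX_t)$, so that on $[t_k,t]$ one has $Y_t-Y_{t_k}=\int_{t_k}^t Y_s\,\rmd s+\sqrt2\int_{t_k}^t Z_s\,\rmd B_s$. Jensen's inequality, It\^o's isometry, integration over $[t_k,t_{k+1}]$, summation over $k$, and $t-t_{k(t)}\le h$ then give $\mathbb{E}\int_0^T\|Y_t-Y_{t_{k(t)}}\|^2\,\rmd t\lesssim h^2\int_0^T\mathbb{E}\|Y_s\|^2\,\rmd s+h\int_0^T\mathbb{E}\|Z_s\|_{\Fr}^2\,\rmd s$. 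Using $\baX_s\overset{d}{=}\foX_{T-s}$ and the change of variable $u=T-s$, $\int_0^T\mathbb{E}\|Y_s\|^2\,\rmd s=4\int_0^T\fisher(\mustar P_u|\gamma^d)\,\rmd u$ and $\int_0^T\mathbb{E}\|Z_s\|_{\Fr}^2\,\rmd s=4\int_0^T\!\!\int\|\nabla^2\log\ptilde_u\|_{\Fr}^2\,\rmd(\mustar P_u)\,\rmd u$. The key point is the $\Gamma_2$ identity for the OU semigroup (curvature $CD(1,\infty)$): $\tfrac{\rmd}{\rmd u}\fisher(\mustar P_u|\gamma^d)=-2\int\bigl(\|\nabla^2\log\ptilde_u\|_{\Fr}^2+\|\nabla\log\ptilde_u\|^2\bigr)\rmd(\mustar P_u)$. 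Integrating it yields both $\int_0^T\fisher(\mustar P_u|\gamma^d)\,\rmd u\le\tfrac12\fisher(\mustar|\gamma^d)$ and $\int_0^T\!\!\int\|\nabla^2\log\ptilde_u\|_{\Fr}^2\,\rmd(\mustar P_u)\,\rmd u\le\tfrac12\fisher(\mustar|\gamma^d)$, both \emph{uniformly in $T$ and with no early stopping}, precisely because finiteness of $\fisher(\mustar|\gamma^d)$ is what keeps these integrals finite down to $u=0$. With $h\le1$ this makes the discretization part $\lesssim h\,\fisher(\mustar|\gamma^d)$; combining with the two previous bounds proves \eqref{i}, and the stated orders of $T$ and $N$ follow by substituting $h=T/N$ and absorbing $T\varepsilon^2$ into logarithmic factors.

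I expect the main obstacle to be the rigorous justification of the Girsanov change of measure and of the $\Gamma_2$ computations under \Cref{ass:hyp_on_mustar} alone, that is, with no Lipschitz control on the score or its estimator. One must check that the relevant stochastic exponential is a genuine martingale and that $\KL(P^{\baX}_{[0,T]}|P^{\app}_{[0,T]})$ is finite; this requires a priori $\rml^2$-control of $Y$ and $Z$ along the flow, which is again supplied by $\fisher(\mustar|\gamma^d)<+\infty$, possibly after localization or a smooth approximation of $\mustar$ removed at the end by lower semicontinuity of $\KL$. The smoothing provided by $P_u$ for $u>0$ makes all differential identities classical on $(0,T]$, so the only delicate endpoint is $u\to0$, and it is handled exactly by the integrability furnished by the finite Fisher information — which is the whole point of the theorem.
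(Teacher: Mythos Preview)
Your proposal is correct and follows essentially the same route as the paper: Girsanov plus data-processing to reduce to three error terms, LSI for the mixing term, \Cref{ass:estimation-absolute} for the score term, and the It\^o dynamics of $Y_t$ from \Cref{PMP} for the discretization term. The one organizational difference is how you close the discretization estimate: you bound $\int_0^T\mathbb{E}\|Y_s\|^2\,\rmd s$ and $\int_0^T\mathbb{E}\|Z_s\|_{\Fr}^2\,\rmd s$ separately via the global $\Gamma_2$ identity $\tfrac{\rmd}{\rmd u}\fisher(\mustar P_u|\gamma^d)=-2\int(\|\nabla^2\log\ptilde_u\|_{\Fr}^2+\|\nabla\log\ptilde_u\|^2)\,\rmd(\mustar P_u)$, whereas the paper packages the same identity locally as $g(t_{k+1})-g(t_k)\gtrsim\int_{t_k}^{t_{k+1}}(\mathbb{E}\|Y_s\|^2+\mathbb{E}\|Z_s\|_{\Fr}^2)\,\rmd s$ (this is \eqref{up2}) and then telescopes to $h\,g(T)=h\,\fisher(\mustar|\gamma^d)$. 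These are the same computation in forward versus backward time.

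On the technical side, what you flag as ``the main obstacle'' is exactly what the paper spends most of its effort on, and it handles it in two concrete steps that you only sketch: (i) it works on $[0,T-\delta]$ so that all score derivatives are classical by \Cref{mapm_smooth}, and passes to $\delta\to0$ at the very end via lower semicontinuity of $\KL$; (ii) to make the stochastic integrals in \Cref{PMP} genuine martingales it replaces $\mustar$ by the eighth-moment truncations $\fom_0^n\propto\rme^{-\|x\|^2/n}\fom_0$, proves the estimate for each $n$, and recovers the general case through weak convergence plus Fatou (this is \Cref{lastmin_lemma} and \Cref{null_E}). Your final paragraph anticipates both moves, so nothing is missing conceptually; the paper simply carries them out explicitly.
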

  \begin{proof}
    The proof is postponed to \Cref{sec:proof-theor-refth}.
  \end{proof}
  
  The term $\mathbf{C}(T,\varepsilon)$ in \eqref{i} accounts for the fact that the score $(t,x) \mapsto \ptilde_t(x)$ is replaced in the discretization \eqref{eq:SGM-OU-Based}  by the estimator the score estimator $\tildes_{\thetastar}$ satisfying \Cref{ass:estimation-absolute}. 


Secondly, we consider the case of relative small $\mrl^2$ estimation error, \ie ,
\begin{assumption}\label{ass:estimation-relative}
There exist $\varepsilon^2 >0$ and $\thetastar \in \Theta$ such that, for any $k\in\{0,...,N-1\}$, 
    \begin{equation}\label{assr}
        \mathbb{E} \left[\norm{\tildes_{\thetastar}(T-t_k,\foX_{T-t_k})- 2\nabla \log \ptilde_{T-t_{k}}(\foX_{T-t_k})}^2\right]
        \le \varepsilon^2 \mathbb{E} \left[\norm{2\nabla \log \ptilde_{T-t_k}(\foX_{T-t_k})}^2\right]\eqsp,
    \end{equation}

      where $(\foX_t)_{t\in[0,T]}$ is the processes defined in \eqref{FE} and $\ptilde$ is given by \eqref{def-ptilde}.
    \end{assumption}
    Note that the above assumption could also be written in an integral (averaged) form as it is the case for \Cref{ass:estimation-absolute}. Assumption \Cref{ass:estimation-relative} appears meaningful, considering that as $t\to \infty$, the function ${\ptilde_{t}}$ converges exponentially fast to the constant function equal to $1$ in $\rml^1(\gamma^d)$, given that the OU process converges to $0$ in total variation at this rate. Therefore, it is expected that the task of learning the relative score $\nabla \log \ptilde_t$ becomes easier as $t\to \infty$, and the additional term $\mathbb{E}[\normLigne{\nabla \log \ptilde_{T-t_k}(\foX_{T-t_k})}^2]$ in the right-hand side of \eqref{assr} is introduced to account for this. In addition, in the simple case $\mustar\equiv \mathcal{N}(\mu, \Id)\eqsp,$ for some $\mu \in \rset^d\eqsp,$ as
shown below in \Cref{Mean_of_H3}, \Cref{ass:estimation-relative} holds with high probability. \\
    Under \Cref{ass:estimation-relative}, we can improve the convergence bounds stated in \Cref{theo_absolute_OU_just_fisher}:

\begin{theorem}\label{theo_relative_OU_just_fisher}
  Let $T\geq1,h\leq1$ and assume  \Cref{ass:estimation-absolute}-\Cref{ass:estimation-relative}. Consider the EI scheme $(\app_t)_{t\in [0,T]}$ with constant step size $h>0$ defined by \eqref{eq:SGM-OU-Based}. Denoting for any $t \in [0,T]$ by $\applaw_t$ the distribution of $\app_{t}$, \eqref{i} holds with 
  $\mathbf{C}(T,\varepsilon) = {\varepsilon^2\fisher(\mustar|\gamma^d)}$. 
\end{theorem}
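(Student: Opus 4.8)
The plan is to re-run the proof of \Cref{theo_absolute_OU_just_fisher} line by line, modifying only the single step where \Cref{ass:estimation-absolute} is used. In that argument the score estimation error enters exclusively through a weighted sum of local $\rml^2$ errors of the form $\sum_{k=0}^{N-1}h_{k+1}\,\mathbb{E}\big[\|\tildes_{\thetastar}(T-t_k,\baX_{t_k})-2\nabla\log\ptilde_{T-t_k}(\baX_{t_k})\|^2\big]$ (up to possibly an extra factor $\rme^{-2(T-t_k)}$ coming from the contractivity of the backward dynamics, which can only help), and under \Cref{ass:estimation-absolute} this sum is bounded by $T\varepsilon^2$, producing the term $\mathbf{C}(T,\varepsilon)=T\varepsilon^2$. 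The contraction term $\rme^{-2T}\KL(\mustar|\gamma^d)$ and the discretization term $h\fisher(\mustar|\gamma^d)$ in \eqref{i} do not depend on $\varepsilon$ and are kept verbatim, so the whole content of the improvement lies in replacing $T\varepsilon^2$ by $\varepsilon^2\fisher(\mustar|\gamma^d)$, which is now independent of $T$.

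First I would apply \Cref{ass:estimation-relative} pointwise in $k$, which bounds each summand by $\varepsilon^2\,\mathbb{E}\big[\|2\nabla\log\ptilde_{T-t_k}(\baX_{t_k})\|^2\big]$, hence the estimation contribution by $\varepsilon^2\sum_{k=0}^{N-1}h_{k+1}\,\mathbb{E}\big[\|2\nabla\log\ptilde_{T-t_k}(\baX_{t_k})\|^2\big]$. Since $\baX_{t_k}$ has law $\mustar P_{T-t_k}$ with density $\fom_{T-t_k}$ and $\ptilde_{T-t_k}=\fom_{T-t_k}/\gamma^d$ by \eqref{def-ptilde}, each expectation equals $4\int\|\nabla\log(\rmd(\mustar P_{T-t_k})/\rmd\gamma^d)\|^2\,\rmd(\mustar P_{T-t_k})=4\,\fisher(\mustar P_{T-t_k}|\gamma^d)$. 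Thus the quantity to control is $4\varepsilon^2\sum_{k=0}^{N-1}h_{k+1}\,\fisher(\mustar P_{T-t_k}|\gamma^d)$, i.e. a Riemann sum of $s\mapsto\fisher(\mustar P_{T-s}|\gamma^d)$.

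The key new ingredient is the exponential decay of the relative Fisher information along the OU flow. Because the OU dynamics \eqref{FE} is reversible with respect to $\gamma^d$, the density $h_s=\rmd(\mustar P_s)/\rmd\gamma^d=\ptilde_s$ satisfies $h_s=P_s h_0$ with $h_0=\rmd\mustar/\rmd\gamma^d$, and since $\gamma^d$ verifies the curvature condition $\mathrm{CD}(1,\infty)$, the standard $\Gamma$-calculus estimate yields $\fisher(\mustar P_s|\gamma^d)\le\rme^{-2s}\,\fisher(\mustar|\gamma^d)$ for all $s\ge0$; see \cite{bakry2014analysis}. With constant step size $t_k=kh$, $T=Nh$, and using that $t\mapsto\rme^{-2t}$ is decreasing so that $h\,\rme^{-2jh}\le\int_{(j-1)h}^{jh}\rme^{-2t}\,\rmd t$, we obtain $\sum_{k=0}^{N-1}h_{k+1}\,\fisher(\mustar P_{T-t_k}|\gamma^d)\le\fisher(\mustar|\gamma^d)\,h\sum_{j=1}^{N}\rme^{-2jh}\le\fisher(\mustar|\gamma^d)\int_0^{\infty}\rme^{-2t}\,\rmd t=\tfrac12\,\fisher(\mustar|\gamma^d)$. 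Hence the estimation contribution is at most $2\varepsilon^2\fisher(\mustar|\gamma^d)$, which gives \eqref{i} with $\mathbf{C}(T,\varepsilon)=\varepsilon^2\fisher(\mustar|\gamma^d)$.

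The only point requiring care, rather than a genuine obstacle, is to verify that in the proof of \Cref{theo_absolute_OU_just_fisher} the estimation error is indeed isolated in the clean additive form above and evaluated along a process whose time-$t_k$ marginal is $\mustar P_{T-t_k}$; if instead it appears along the simulated process $(\app_{t_k})_k$, one must first transfer it to the forward marginals by a change of measure, which is routine given the a priori bounds already established there. Everything else is identical to \Cref{theo_absolute_OU_just_fisher}, and the appearance of $\fisher(\mustar|\gamma^d)$ in place of $T\varepsilon^2$ is exactly the manifestation of the Bakry--\'Emery contraction anticipated in the introduction.
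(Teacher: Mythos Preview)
Your proposal is correct and follows essentially the same route as the paper: the proof of \Cref{theo_absolute_OU_just_fisher} is repeated verbatim except for the bound on the score-approximation term $E_2$, which is handled by applying \Cref{ass:estimation-relative}, identifying $\mathbb{E}[\|2\nabla\log\ptilde_{T-kh}(\baX_{kh})\|^2]=g(kh)=\fisher(\mustar P_{T-kh}|\gamma^d)$, and then using the exponential decay of Fisher information along the OU semigroup (the paper's \Cref{gYup}, i.e.\ precisely the Bakry--\'Emery $\mathrm{CD}(1,\infty)$ estimate you invoke) to sum the resulting geometric series. Your caveat is unnecessary: in the decomposition \eqref{three_errors} the term $E_2$ is evaluated along the true backward process $(\baX_{kh})_k$, whose marginals are exactly $\mustar P_{T-kh}$, so no change of measure is required.
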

  \begin{proof}
    The proof is postponed to \Cref{sec:proof-theor-refth}.
  \end{proof}

  Note that considering \Cref{ass:estimation-relative} allows us to mitigate the effects of the score approximation error and to relate it to the Fisher information of the data distribution. We present our third and last result: we show that employing an exponential-then-constant scheme for the step sizes, we can achieve error bounds that scale logarithmically instead then linearly in the Fisher information.
  \begin{theorem}\label{theo_exponential_then_linear}
Let $c \in \ocint{0,1/2}$ and  $T\geq1+2c$ and assume \Cref{ass:estimation-absolute}-\Cref{ass:hyp_on_mustar}. Set $\Ltt=d^{-1}\fisher(\mustar|\gamma^d)$ and assume that $\Ltt \geq 2$. Choose the constant and exponentially decreasing sequence of  step-size,  \ie, satisfying for $k < N$, $h_{k+1}= c \min\{ \max\{T-t_k,a\},1\}$\footnote{An explicit expression is provided in \eqref{step_size_dec} in the supplementary document.}, with $a \leq 1/\Ltt$. Denoting for any $t\in[0,T]$ by $p^{\thetastar}_t$ the distribution of $X^{\thetastar}_t$ we have that
 \begin{equation} \label{i-exponential_step_coro}
      \KL(\mustar|\applaw_T)\lesssim \rme^{-2T} \KL(\mustar|\gamma^d) + \mathbf{C}(T,\varepsilon) + c [a\Ltt d+(d+\Mtt_2^2)( \log (1/a)+1)]\eqsp,
    \end{equation}
    where $\mathbf{C}(T,\varepsilon) = T \varepsilon^2$. Moreover, the bound \eqref{i-exponential_step} also holds if we replace the term $\KL(\mustar|\gamma^d)\rme^{-2T}$ with $ (\Mtt_2^2+d)\rme^{-T}$. In particular, choosing 
    \begin{equation}\label{TandNfinitefishabserr_exponential_step}
T = (1/2)\log((\Mtt_2+d)/\varepsilon^2) \eqsp, \quad c =  \frac{\varepsilon^2}{(d+\Mtt^2_2)\log \Ltt}   \eqsp, \quad a = 1/\Ltt \eqsp,
    \end{equation}
    implies that
    \begin{equation}
      \label{eq:statemeent_and_bound}
       N\lesssim (d+\Mtt_2)\log(\Ltt)(T+\log(\Ltt))/\varepsilon^2 \eqsp,
     \end{equation}
     and
    makes the approximation error $\tilde{O}(\varepsilon^2)$, where the notation $\tilde{O}$ indicates that logarithmic factors of $d,\varepsilon$
and $\Mtt^2_2$ have been dropped.
\end{theorem}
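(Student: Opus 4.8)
The plan is to bootstrap from the step-size-agnostic inequality that underlies \Cref{theo_absolute_OU_just_fisher} and \Cref{theo_relative_OU_just_fisher}. Inspecting \Cref{sec:proof-theor-refth}, before one specialises to a constant step, the analysis of the relative score process $Y_t=2\nabla\log\ptilde_{T-t}(\baX_t)$ through its adjoint (stochastic maximum principle) dynamics produces, under \Cref{ass:estimation-absolute}, a bound of the form
\[
  \KL(\mustar|\applaw_T)\lesssim\rme^{-2T}\KL(\mustar|\gamma^d)+T\varepsilon^2+\sum_{k=0}^{N-1}h_{k+1}^2\,\Phi(T-t_k)\eqsp,
\]
where, by the same computations that produce the $h\,\fisher(\mustar|\gamma^d)$ term in the constant-step case, the per-step cost $\Phi(s)$ is controlled by a Fisher-information part $g(s):=\fisher(\mustar P_s|\gamma^d)=\mathbb{E}[\normLigne{\nabla\log\ptilde_s(\foX_s)}^2]$ together with a dimensional/noise part of size $(d+\Mtt_2^2)$ times a time-profile $w(s)$ that is $O(1)$ for $s\gtrsim1$ and blows up like $s^{-2}$ as $s\to0$. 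The first step is to extract this inequality in disaggregated form, that is, keeping the time-dependence of $g$ and $w$ intact rather than majorising up front by the time-uniform estimate that yields $h\,\fisher(\mustar|\gamma^d)$.

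The second step collects the a priori controls on $g$ that make adaptivity pay off. The entropy-dissipation identity along OU, $\int_0^{\tau}\fisher(\mustar P_s|\gamma^d)\,\rmd s=\KL(\mustar|\gamma^d)-\KL(\mustar P_{\tau}|\gamma^d)$, governs its time-integrals (and, with the Gaussian log-Sobolev inequality, already recovers $\sum_k h^2 g(T-t_k)\lesssim h\,\fisher(\mustar|\gamma^d)$ for a uniform step). Since OU satisfies the curvature condition $\mathrm{CD}(1,\infty)$, Bakry--\'Emery theory \cite{bakry2014analysis} gives the contraction $g(s)\le\rme^{-2s}\fisher(\mustar|\gamma^d)$, so $g$ is non-increasing and $g(s)\le\fisher(\mustar|\gamma^d)=\Ltt d$ everywhere. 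Finally, writing $\foX_s=\rme^{-s}\foX_0+\sqrt{1-\rme^{-2s}}\,Z$ exhibits $\fom_s$ as a Gaussian convolution, whence the smoothing bound $g(s)\lesssim d/(1-\rme^{-2s})+\Mtt_2^2+d\lesssim(d+\Mtt_2^2)/s$ for $s\le1$; the crossover between this and the cap $\Ltt d$ happens at scale $s\sim1/\Ltt$, which is where the floor $a\le1/\Ltt$ sits.

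The third and central step inserts the schedule $h_k=c\min\{\max\{T-t_k,a\},1\}$ and splits the discretisation sum over the regimes of $s_k:=T-t_k$. For $s_k\ge1$ the step equals $c$; bounding $g$ through the entropy-dissipation identity together with the moment estimate $\KL(\mustar P_1|\gamma^d)\lesssim\rme^{-1}(\Mtt_2^2+d)$ already used for \Cref{theo_absolute_OU_just_fisher}, and using $w=O(1)$, this regime contributes $\lesssim c(d+\Mtt_2^2)$, which after the final choice of $c$ is of the order of $\mathbf{C}(T,\varepsilon)=T\varepsilon^2$. For $a\le s_k<1$ one has $s_{k+1}=(1-c)s_k$, so the $s_k$ form a geometric sequence of length $O(c^{-1}\log(1/a))=O(c^{-1}\log\Ltt)$, and since $h_{k+1}=cs_k$ each step contributes $h_{k+1}^2\Phi(s_k)\lesssim c^2 s_k^2[(d+\Mtt_2^2)/s_k+(d+\Mtt_2^2)/s_k^2]\lesssim c^2(d+\Mtt_2^2)$; summing over the segment gives $O(c(d+\Mtt_2^2)(\log\Ltt+1))$ --- this is exactly where the logarithmic, rather than linear, dependence on $\Ltt$ appears. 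For $s_k<a$ the step equals $ca$, there are $O(1/c)$ such steps with left-endpoints $s_k\ge h_N=ca$, and the cap $\Phi(s)\lesssim\Ltt d$ against $h_{k+1}=ca\le c/\Ltt$ yields $\lesssim c\,a\,\Ltt d$. Summing the three regimes gives \eqref{i-exponential_step_coro}; the replacement of $\rme^{-2T}\KL(\mustar|\gamma^d)$ by $(\Mtt_2^2+d)\rme^{-T}$ is inherited verbatim from \Cref{theo_absolute_OU_just_fisher} via the second-moment bound on $\KL(\mustar P_T|\gamma^d)$. Plugging in $T=(1/2)\log((\Mtt_2+d)/\varepsilon^2)$, $c=\varepsilon^2/((d+\Mtt_2^2)\log\Ltt)$ and $a=1/\Ltt$ then makes every term of the bound $\tilde{O}(\varepsilon^2)$ (using $\Ltt\ge2$ to absorb bare $d$ terms), while counting steps across the three regimes gives $N\lesssim c^{-1}(T+\log\Ltt)\lesssim(d+\Mtt_2^2)\log\Ltt\,(T+\log\Ltt)/\varepsilon^2$, that is, \eqref{eq:statemeent_and_bound}.

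The main obstacle is the middle (geometric) regime: one must show that the rule $h_{k+1}\asymp c(T-t_k)$ exactly flattens the per-step cost $h_{k+1}^2\Phi(T-t_k)$ to $O(c^2(d+\Mtt_2^2))$, uniformly over the $\Theta(c^{-1}\log\Ltt)$ geometrically-spaced steps, so that their sum is $O(c(d+\Mtt_2^2)\log\Ltt)$ and not $O(c\,\Ltt d)$. This forces one to inject the smoothing bound $g(s)\lesssim(d+\Mtt_2^2)/s$ (for the Fisher part) and the $s^{-2}$ profile of the noise term (for the dimensional part) at precisely the resolution of the geometric grid, and to check that the two crossovers --- with the cap $\Ltt d$ near $s=0$ and with the $O(1)$ regime near $s=1$ --- coincide with the points where the schedule changes shape. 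A subsidiary, organisational difficulty is that the proof of \Cref{theo_absolute_OU_just_fisher}--\Cref{theo_relative_OU_just_fisher} must be reworked so as to keep the discretisation error in the disaggregated form $\sum_k h_{k+1}^2[\fisher(\mustar P_{T-t_k}|\gamma^d)+(d+\Mtt_2^2)w(T-t_k)]$, with the profile $w$ made explicit, rather than pre-summed into the single term $h\,\fisher(\mustar|\gamma^d)$ of the constant-step bounds.
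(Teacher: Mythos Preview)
Your plan diverges from the paper at the crucial point and contains a real gap. You propose to write the discretisation error as $\sum_k h_{k+1}^2\,\Phi(T-t_k)$ for an explicit per-step cost $\Phi(s)$ whose ``noise part'' is $(d+\Mtt_2^2)w(s)$ with $w(s)\sim s^{-2}$. But obtaining such a $\Phi$ means bounding $\int_{t_k}^{t_{k+1}}\mathbb{E}[\|Z_r\|^2_{\Fr}]\,\rmd r$, i.e.\ you need a pointwise-in-time estimate $\mathbb{E}[\|Z_r\|^2_{\Fr}]\lesssim (d+\Mtt_2^2)/(T-r)^{2}$ under \Cref{ass:hyp_on_mustar} alone. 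No such linear-in-$d$ bound is available here; the known pointwise Hessian estimates (as in \cite{chen2022improved}) give $d^{2}/(T-r)^{2}$, which is precisely why their Theorem~2.5 has quadratic dimension dependence. With that input your middle-regime sum becomes $O(c\,d^2\log\Ltt)$, not $O(c\,(d+\Mtt_2^2)\log\Ltt)$, and you miss the theorem as stated. The crude alternative $g(t_{k+1})-g(t_k)\le g(t_{k+1})$ loses a factor $c$ in the middle regime and also fails.

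The paper's mechanism is genuinely different and is what buys linearity in $d$. From \Cref{PMP} one has only $\mathbb{E}[\|Y_t-Y_{t_k}\|^2]\lesssim g(t_{k+1})-g(t_k)$, so $E_3\lesssim\sum_k h_{k+1}\bigl(g(t_{k+1})-g(t_k)\bigr)$, with the $Z$-contribution absorbed into the \emph{increment} of $g$ via the identity \eqref{up2}. The key step is then an Abel (summation-by-parts) transformation,
\[
\sum_k h_{k+1}\bigl(g(t_{k+1})-g(t_k)\bigr)=g(T)h_N-g(0)h_1+\sum_{k\ge1}g(t_k)(h_k-h_{k+1})\eqsp,
\]
which trades increments of $g$ for point values of $g$, and those are controlled linearly in $d$ by \Cref{CD0N}. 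The three regimes then correspond to where $h_k-h_{k+1}$ vanishes (constant steps) or satisfies $h_k-h_{k+1}\lesssim c\,h_{k+1}$ (geometric steps), and the boundary term $g(T)h_N=ca\,\Ltt d$ produces the $c\,a\Ltt d$ in \eqref{i-exponential_step_coro}. Your regime split and step-count for \eqref{eq:statemeent_and_bound} are fine, but to make the argument go through you must replace the $h_{k+1}^2\Phi$ ansatz by the Abel-summation device.
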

  \begin{proof}
    The proof is postponed to \Cref{appendix_exponential_case}.
  \end{proof}

\begin{corollary}\label{coro:theo_exponential_then_linear}
Let $c \in \ocint{0,1/2}$, $\delta\in (0,1/2],$ $T\geq1+2c$ and assume \Cref{ass:estimation-absolute}. Assume only that $\mustar$ has finite second order moment. Set $\Ltt=d^{-1}\fisher( \mustar P_{\delta} |\gamma^d)$ and assume that $\Ltt \geq 2$. Choose the constant and exponentially decreasing sequence of  step-size, \ie, satisfying for $k < N$, $h_{k+1}= c \min\{ \max\{T-t_k,1/L\},1\}$. Denoting for any $t\in[0,T]$ by $p^{\thetastar}_t$ the distribution of $X^{\thetastar}_t$ we have that 
 \begin{equation} \label{i-exponential_step}
      \KL(\mustar P_{\delta}|\applaw_{T-\delta})\lesssim \rme^{-T}\{\Mtt_2^2 +d\} + \mathbf{C}(T,\varepsilon) + c [(d+\Mtt_2^2)( \log(d +\Mtt^2_2) + \log(\delta^{-1}) +1)]\eqsp,
    \end{equation}
    where $\mathbf{C}(T,\varepsilon) = T \varepsilon^2$. 
  \end{corollary}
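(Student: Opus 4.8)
The proof follows closely that of \Cref{theo_exponential_then_linear}, the only genuinely new ingredient being a relative-Fisher-information bound for the $\delta$-smoothing $\mustar P_\delta$: although $\mustar$ is only assumed to have a finite second moment, the Ornstein--Uhlenbeck semigroup is instantaneously smoothing, so $\mustar P_\delta$ always has finite relative Fisher information against $\gamma^d$, with a bound blowing up only like $\delta^{-1}$ as $\delta\to0$. Plugging this into the logarithmic-in-Fisher estimate underlying \Cref{theo_exponential_then_linear} produces the $\log(\delta^{-1})$ term in \eqref{i-exponential_step}.

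First I would set up the reduction. Put $\mu'_\star=\mustar P_\delta$ and $T'=T-\delta$. The OU process started from $\mu'_\star$ is exactly $(\foX_{\delta+s})_{s\geq0}$, so its time-$s$ marginal has relative density $\ptilde_{\delta+s}$ against $\gamma^d$, and its time-reversal on $[0,T']$ coincides in law with $(\baX_t)_{t\in[0,T']}$; in particular $\KL(\mustar P_\delta\,|\,\applaw_{T-\delta})$ is the divergence, at the terminal time $T'$, between that time-reversal and the OU-based SGM \eqref{eq:SGM-OU-Based} restricted to $[0,T']$. Moreover, the corollary's discretization on $[0,T']$ --- step sizes $h_k=c\min\{T-t_k,1\}=c\min\{\delta+(T'-t_k),1\}$, and drift at step $k$ equal to $-\app_t+\tildes_{\thetastar}(\delta+(T'-t_k),\app_{t_k})$ --- is exactly the OU-based SGM for the shifted data distribution $\mu'_\star$ with horizon $T'$ and relative-score estimator $(s,x)\mapsto\tildes_{\thetastar}(\delta+s,x)$; and the sequence $\{h_k\}$ agrees, pointwise up to a universal factor $2$, with the exponential-then-constant schedule of \Cref{theo_exponential_then_linear} for horizon $T'$ and floor $a=\delta$. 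Finally, \Cref{ass:estimation-absolute} for the original problem, whose averaged sum only decreases when the steps with $t_k>T-\delta$ are dropped, yields the absolute-error assumption for the shifted problem with the same $\varepsilon^2$.

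Next I would prove the Fisher estimate. Writing $\foX_r\overset{d}{=}\rme^{-r}\foX_0+\sqrt{1-\rme^{-2r}}\,Z$ with $Z\sim\gamma^d$ independent of $\foX_0\sim\mustar$, Tweedie's identity gives, for $r>0$,
\[\nabla\log\ptilde_r(x)=\frac{\rme^{-r}}{1-\rme^{-2r}}\bigl(\mathbb{E}[\foX_0\mid\foX_r=x]-\rme^{-r}x\bigr);\]
since $\foX_r=\rme^{-r}\foX_0+\sqrt{1-\rme^{-2r}}\,Z$, the numerator equals $\mathbb{E}[(1-\rme^{-2r})\foX_0-\rme^{-r}\sqrt{1-\rme^{-2r}}\,Z\mid\foX_r=x]$, so conditional Jensen, independence of $(\foX_0,Z)$ and the elementary inequality $1-\rme^{-2r}\geq r$ for $r\leq1/2$ give $\fisher(\mustar P_r\,|\,\gamma^d)\leq\rme^{-2r}\Mtt_2^2+d/(1-\rme^{-2r})\lesssim\Mtt_2^2+d/\min\{r,1/2\}$. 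Setting $\Ltt_\delta=d^{-1}\fisher(\mustar P_\delta\,|\,\gamma^d)$ and using $d\geq1$, this gives $\Ltt_\delta\lesssim\Mtt_2^2/d+\delta^{-1}$ and $\log\Ltt_\delta\lesssim\log(d+\Mtt_2^2)+\log(\delta^{-1})+1$. Carrying $(\mu'_\star,T')$ and floor $a=\delta$ through the argument of \Cref{theo_exponential_then_linear}, in its moment version --- so that $\KL(\mu'_\star|\gamma^d)\rme^{-2T'}$ is replaced by $(\Mtt_2^2(\mu'_\star)+d)\rme^{-T'}\lesssim(\Mtt_2^2+d)\rme^{-T}$, using $\Mtt_2^2(\mu'_\star)=\rme^{-2\delta}\Mtt_2^2+(1-\rme^{-2\delta})d\leq\Mtt_2^2+d$ and $\delta\leq1/2$ --- one obtains a bound of the form $(\Mtt_2^2+d)\rme^{-T}+T'\varepsilon^2+c[a\Ltt_\delta d+(d+\Mtt_2^2)(\log\Ltt_\delta+1)]$; here $a\Ltt_\delta d=\delta\,\fisher(\mustar P_\delta\,|\,\gamma^d)\lesssim\delta\Mtt_2^2+d\lesssim\Mtt_2^2+d$, $T'\varepsilon^2\leq T\varepsilon^2$, and the logarithm is bounded as above, which is exactly \eqref{i-exponential_step}.

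The main obstacle is making this last step legitimate: with floor $a=\delta$ one has $a\leq1/\Ltt_\delta$ only when $\Mtt_2^2\lesssim d/\delta$, so in the complementary regime \Cref{theo_exponential_then_linear} cannot be invoked as a black box and one must re-examine its proof. The key point is that the only feature of the floor region entering the discretization estimate is an upper bound on the relative-score energy $\mathbb{E}[\norm{2\nabla\log\ptilde_{T-t}(\baX_t)}^2]$ there; thanks to the early stopping at $T-\delta$, every forward time involved is $\geq\delta$, so this energy never exceeds $4\,\fisher(\mustar P_\delta\,|\,\gamma^d)\lesssim\Mtt_2^2+d/\delta$, and re-running the argument of \Cref{theo_exponential_then_linear} with this bound in place of $4\,\fisher(\mustar|\gamma^d)$ reproduces a floor-region contribution $\lesssim c\,\delta\,\fisher(\mustar P_\delta\,|\,\gamma^d)\lesssim c(\Mtt_2^2+d)$, which is harmless; the factor-$2$ discrepancy between the two schedules is likewise absorbed into the constants, since the discretization bound is a sum of per-step errors each of degree two in the step size. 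Two minor points remain: invoking \Cref{theo_exponential_then_linear} with horizon $T'=T-\delta$ nominally requires $T'\geq1+2c$, costing an additive $\delta\leq1/2$ over the corollary's hypothesis $T\geq1+2c$ which is absorbed into the constants; and the differentiation under the integral sign in Tweedie's identity is immediate since $\fom_r$ is a Gaussian mixture.
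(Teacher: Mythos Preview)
Your proposal is correct and follows essentially the same route as the paper: reduce to the smoothed data $\mu'_\star=\mustar P_\delta$, invoke \Cref{theo_exponential_then_linear} (or rather its proof), and feed in a Fisher bound $\fisher(\mustar P_\delta\,|\,\gamma^d)\lesssim d/\delta+\Mtt_2^2$. The Fisher estimate you derive via Tweedie is precisely what the paper extracts from \Cref{CD0N}.

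The one substantive difference is your choice of floor. You take $a=\delta$, which forces you to confront the possible failure of $a\leq 1/\Ltt_\delta$ and then to re-open the proof of \Cref{theo_exponential_then_linear} to control the floor region by hand; it also introduces the factor-$2$ schedule mismatch you have to argue away. The paper instead applies \Cref{theo_exponential_then_linear} with $a=0$. This is cleaner on all three counts: the hypothesis $a\leq 1/\Ltt$ is vacuous, the term $a\Ltt d$ in \eqref{i-exponential_step_coro} disappears, and the resulting step-size rule $h_k=c\min\{\max\{T-t_k,0\},1\}=c\min\{T-t_k,1\}$ is \emph{exactly} the schedule in the corollary, so no schedule comparison is needed. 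Your entire ``main obstacle'' paragraph then evaporates. What you gain from your route is a self-contained derivation of the smoothing estimate and a more explicit matching of horizons; what the paper's choice of $a=0$ buys is that \Cref{theo_exponential_then_linear} can be invoked as a black box with $\Ltt$ replaced by $\Ltt_\delta$, and $\log\Ltt_\delta\lesssim \log(\delta^{-1})+\log(d+\Mtt_2^2)+1$ gives the stated bound directly.
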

  \begin{proof}
    Once observed that  \Cref{CD0N} implies $\fisher( \mustar P_{\delta} |\gamma^d) \lesssim d/\delta + \Mtt_2^2$, the proof just consists in applying \Cref{theo_exponential_then_linear} with $a= 1/\Ltt$ to the smoothed density $\mustar P_{\delta}$ instead of $\mustar$.
  \end{proof}

\subsection{Convergence bounds for the kOU-based SGM}

Second and last, following \cite{dockhorn2021score}, we deal with an other diffusion, namely with the kinetic Ornstein -Uhlenbeck (kOU) process. Compared to the OU process, the kOU process is defined by a coupled system of SDEs which involves a new variable representing the velocity process
\begin{equation}\label{FEG}
        \rmd \foX_t= \foV_t \rmd t \eqsp,\quad \rmd \foV_t=-\{\foX_t+2\foV_t\} \rmd t+2 \rmd B_t\eqsp, \quad t\in [0,T]\eqsp, \quad (\foX_0, \foV_0) \sim \mustar\otimes\gamma^d\eqsp,
      \end{equation}
      and admits $\muprior=\gamma^{2d}$ as unique stationary distribution.
Keeping for simplicity, the same notation as in \Cref{sec:score-gener-models}, $(P_t)_{t \in\ccint{0,T}}$ and $\{\fom_t \,:\, t \in \ccint{0,T}\}$ denote in particular the semigroup and the transition density associated with \eqref{FEG} respectively.
      Reasoning as before, the time reversal of \eqref{FEG} is a weak solution of the SDE: for $t \in \ccint{0,T}$,
\begin{align} \label{BEG}
&        \rmd \baX_t= -\baV_t \rmd t\eqsp,\quad \rmd \baV_t=\{\baX_t-2\baV_t+4\nabla_v\log\ptilde_{T-t}(\baX_t, \baV_t) \}\rmd t+2 \rmd B_t\eqsp,
\end{align}
where $(\baX_0, \baV_0) \sim (\mustar\otimes\gamma^d) P_T$ and $\ptilde$ is defined by \eqref{def-ptilde}.
Given an estimator $\tildes_{\thetas}$ for the score $\{\nabla_v\log\ptilde_{t}\,: \, t \in \ccint{0,T}\}$ and  a sequence of step sizes $\{h_k\}_{k=1}^N$, $N \geq 1$, such that $\sum_{k=1}^N h_k = T$, the resulting kOU-based SGM from \eqref{eq:EI_v0} that we consider is described by the following discretization scheme:  for $k\in\{0,...,N-1\}$ and $t\in [t_k, t_{k+1}]$
   \begin{equation} \label{eq:SGM-kOU-Based}
        \rmd \app_t= -\appl_t \rmd t\eqsp,\quad 
        \rmd \appl_t=\{\app_t-2\appl_t+ \tildes_{\thetas}(T-t_k,\app_{t_k},\appl_{t_k} ) \}\rmd t+2 \rmd B_t\eqsp,
      \end{equation}
      where we recall that  $t_0 = 0$, $t_{k+1} = t_k + h_{k+1}$ for $k \in \{0,\ldots,N-1\}$, and  $(\app_0 ,\appl_0) \sim \gamma^{2d}$.

In the sequel we state the kOU counterparts of \Cref{theo_absolute_OU_just_fisher,theo_relative_OU_just_fisher}. That is, we consider either the case of small absolute $\mrl^2$ estimation error, \ie , 
\begin{assumption}\label{ass:estimation-absolute-G}
    There exist $\varepsilon^2 >0$ and $\thetastar\in \rset$ such that
    \begin{equation} \label{assG}
        \frac1T \sum_{k=1}^{N-1} h_{k+1} \mathbb{E} \left[\norm{\tildes_{\thetas}(T-t_k,\foX_{T-t_k},\foV_{T-t_k})- 4\nabla_v \log \ptilde_{T-t_k}(\foX_{T-t_k}, \foV_{T-t_k})}^2\right] 
         \le \varepsilon^2 
    \end{equation}
where $(\foX_t, \foV_t)_{t\in[0,T]}$ is the processes defined in \eqref{FEG} and $\ptilde$ is given by \eqref{def-ptilde}. 
\end{assumption}
As before, we also study the case of small relative $\mrl^2$ estimation error, \ie ,
\begin{assumption} \label{ass:estimation-relative-G}
    There exists $\varepsilon^2 >0$ and $\thetastar\in \rset$ such that, for any $k\in\{0,\ldots,N-1\}$,  
    \begin{multline}\label{assGr}
      \mathbb{E} \left[\norm{\tildes_{\thetas}(T-t_k,\foX_{T-t_k},\foV_{T-t_k})- 4\nabla_v \log \ptilde_{T-t_k}(\foX_{T-t_k}, \foV_{T-t_k})}^2\right] \\
      \le \varepsilon^2 \mathbb{E} \left[\norm{\nabla_v \log \ptilde_{T-t_k}(\foX_{T-t_k}, \foV_{T-t_k})}^2\right]\eqsp,
    \end{multline}where $(\foX_t, \foV_t)_{t\in[0,T]}$ is the processes defined in \eqref{FEG} and $\ptilde$ is given by \eqref{def-ptilde}.
\end{assumption}

\begin{theorem}\label{theo_absolute_kOU}
  Let $T \ge 1$ and  assume \Cref{ass:hyp_on_mustar}-\Cref{ass:estimation-absolute-G}. Consider the EI scheme \\$(\app_t)_{t\in [0,T]}$ with constant step size $h>0$ defined by \eqref{eq:SGM-kOU-Based}. Denoting for any $t \in [0,T]$ by $\applaw_t$ the distribution of $\app_{t}$, it holds:
    \begin{equation} \label{i_kOU}
      \KL(\mustar|\applaw_T)\lesssim \rme^{-T/2} \fisher(\mustar|\gamma^{2d})  + \mathbf{C}(T,\varepsilon) + h \fisher(\mustar|\gamma^{2d})\eqsp,
    \end{equation}
    where $\mathbf{C}(T,\varepsilon) = T \varepsilon^2$.
\end{theorem}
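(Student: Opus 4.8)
The plan is to transpose the proof of \Cref{theo_absolute_OU_just_fisher} to the kinetic (hypocoercive) setting. Let $Q$ be the law on path space of the exact time-reversal $(\baX_t,\baV_t)_{t\in\ccint{0,T}}$ solving \eqref{BEG}, started from $(\mustar\otimes\gamma^d)P_T$, and let $Q^{\thetas}$ be the law of the kOU-based SGM $(\app_t,\appl_t)_{t\in\ccint{0,T}}$ defined by \eqref{eq:SGM-kOU-Based}, started from $\gamma^{2d}$. By the time-reversal property the position marginal of $Q$ at time $T$ equals $\mustar$, so the data-processing inequality gives $\KL(\mustar|\applaw_T)\leq\KL(Q|Q^{\thetas})$. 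Since only the velocity coordinate is driven by noise, with diffusion coefficient $2$, Girsanov's theorem combined with the chain rule for relative entropy of path measures yields
\begin{equation}
\KL(\mustar|\applaw_T)\;\leq\;\KL\bigl((\mustar\otimes\gamma^d)P_T\,\big|\,\gamma^{2d}\bigr)\;+\;\tfrac18\sum_{k=0}^{N-1}\int_{t_k}^{t_{k+1}}\!\mathbb{E}_Q\bigl[\normLigne{4\nabla_v\log\ptilde_{T-t}(\baX_t,\baV_t)-\tildes_{\thetas}(T-t_k,\baX_{t_k},\baV_{t_k})}^2\bigr]\rmd t\eqsp.
\end{equation}
Adding and subtracting $4\nabla_v\log\ptilde_{T-t_k}(\baX_{t_k},\baV_{t_k})$ inside the norm splits the second term into a \emph{score-approximation} error and a \emph{discretisation} error, and \eqref{i_kOU} will follow from three separate bounds.

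For the initialisation term I would invoke hypocoercivity of the kinetic Fokker--Planck semigroup. Because $\mustar$ has finite relative Fisher information (\Cref{ass:hyp_on_mustar}), the initial density ratio $\mustar\otimes\gamma^d$ lies in the domain of a modified entropy functional $\mathcal{E}$ of hypocoercive type (relative entropy augmented by a suitable positive quadratic form in the gradient): its velocity-gradient and cross contributions vanish at $\mustar\otimes\gamma^d$ while the position-gradient contribution is $\fisher(\mustar|\gamma^{2d})$, so $\mathcal{E}(\mustar\otimes\gamma^d)\lesssim\fisher(\mustar|\gamma^{2d})$ using the log-Sobolev inequality for $\gamma^{2d}$. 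Since $\mathcal{E}$ decays at rate $\rme^{-T/2}$ along the flow and dominates the relative entropy, $\KL((\mustar\otimes\gamma^d)P_T|\gamma^{2d})\lesssim\rme^{-T/2}\fisher(\mustar|\gamma^{2d})$. For the score-approximation term I would use that under $Q$ the law of $(\baX_{t_k},\baV_{t_k})$ coincides with the law of $(\foX_{T-t_k},\foV_{T-t_k})$ from \eqref{FEG}, so that \Cref{ass:estimation-absolute-G} applies verbatim and gives $\tfrac18\sum_k h_{k+1}\,\mathbb{E}_Q[\normLigne{\tildes_{\thetas}(T-t_k,\baX_{t_k},\baV_{t_k})-4\nabla_v\log\ptilde_{T-t_k}(\baX_{t_k},\baV_{t_k})}^2]\lesssim T\varepsilon^2=\mathbf{C}(T,\varepsilon)$.

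The heart of the argument — and the main obstacle — is the discretisation term $\sum_k\int_{t_k}^{t_{k+1}}\mathbb{E}_Q[\normLigne{Y_t-Y_{t_k}}^2]\rmd t$, where $Y_t:=4\nabla_v\log\ptilde_{T-t}(\baX_t,\baV_t)$ is the \emph{kinetic relative score process}. Here I would first derive the Itô dynamics of $(Y_t)$ — the kinetic analogue of \Cref{PMP} — by differentiating $\nabla_v\log\ptilde_{T-t}$ along $(\baX_t,\baV_t)$ and using that $\ptilde_t$ solves the backward Kolmogorov equation of the kinetic OU generator; the stochastic-maximum-principle structure forces the leading drift terms to cancel, leaving a linear equation driven by the linearised kinetic dynamics together with Hessian terms of $\log\ptilde_t$ and a martingale part. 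A Bakry--Émery / $\Gamma_2$-type computation — here genuinely hypocoercive — then controls these contributions, producing the increment estimate $\mathbb{E}_Q[\normLigne{Y_t-Y_{t_k}}^2]\lesssim(t-t_k)\,\mathbb{E}_Q[\normLigne{Y_{t_k}}^2]$ up to lower-order terms; summing and using $h_{k+1}\leq h$ gives a bound of order $h\int_0^T\mathbb{E}_Q[\normLigne{Y_t}^2]\rmd t$. Finally, since the dissipative part of the kinetic generator acts only in the velocity variable, the entropy-dissipation identity along the forward flow reads $\tfrac{\rmd}{\rmd s}\KL((\mustar\otimes\gamma^d)P_s|\gamma^{2d})\propto-\mathbb{E}[\normLigne{\nabla_v\log\ptilde_s(\foX_s,\foV_s)}^2]$, which integrates to $\int_0^T\mathbb{E}_Q[\normLigne{Y_t}^2]\rmd t\lesssim\KL(\mustar\otimes\gamma^d|\gamma^{2d})\lesssim\fisher(\mustar|\gamma^{2d})$, yielding the term $h\fisher(\mustar|\gamma^{2d})$ and closing \eqref{i_kOU}. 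The delicate point throughout — and what distinguishes this from the overdamped proof of \Cref{theo_absolute_OU_just_fisher} — is handling the Hessian and martingale terms in the dynamics of $(Y_t)$ with no a priori regularity of $\nabla_v\log\ptilde_t$, which is precisely where hypocoercivity must replace a plain Bakry--Émery curvature lower bound.
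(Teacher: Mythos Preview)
Your overall scheme --- Girsanov decomposition into initialisation, score-approximation, and discretisation errors --- matches the paper, and your treatment of $E_2$ via \Cref{ass:estimation-absolute-G} is correct. The real gap is in the discretisation term.

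You assert that a hypocoercive $\Gamma_2$ computation yields $\mathbb{E}[\norm{Y_t-Y_{t_k}}^2]\lesssim(t-t_k)\,\mathbb{E}[\norm{Y_{t_k}}^2]$ modulo lower-order terms, and then propose to close via the entropy-dissipation identity $\int_0^T\mathbb{E}[\norm{Y^v_t}^2]\rmd t\lesssim\KL(\mustar\otimes\gamma^d|\gamma^{2d})$. But the It\^o differential of $Y^v_t=4\nabla_v\log\ptilde_{T-t}(\baX_t,\baV_t)$ (this is \Cref{PMPG}) reads $\rmd Y^v_t=(2Y^v_t-Y^x_t)\rmd t+2Z^{vv}_t\rmd B_t$, so that $\mathbb{E}[\norm{Y^v_t-Y^v_{t_k}}^2]$ contains both a drift contribution involving the \emph{position} score $Y^x_t=4\nabla_x\log\ptilde_{T-t}$ and the quadratic-variation term $\int_{t_k}^t\mathbb{E}[\norm{Z^{vv}_s}^2_{\Fr}]\rmd s$. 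Neither is controlled by $(t-t_k)\,\mathbb{E}[\norm{Y^v_{t_k}}^2]$, and your entropy-dissipation integral only sees the velocity gradient $Y^v$, not $Y^x$ or the Hessian $Z^{vv}$. The claimed increment bound is therefore unjustified and the argument does not close.

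The paper's mechanism is different. It introduces the \emph{twisted} functional $g(t)=\mathbb{E}[\norm{Y^v_t}^2]+\mathbb{E}[\norm{Y^v_t-Y^x_t}^2]$ and proves directly from the coupled SDE for $(Y^x,Y^v)$ the differential inequality
\[
g(t)-g(s)\ \geq\ \int_s^t\Bigl(g(r)+\mathbb{E}[\norm{Z^{vv}_r}^2_{\Fr}]\Bigr)\rmd r\eqsp.
\]
This single inequality absorbs both the $Y^x$-drift and the Hessian martingale term into increments of $g$, yielding $\mathbb{E}[\norm{Y^v_t-Y^v_{t_k}}^2]\lesssim g(t_{k+1})-g(t_k)$; the sum over $k$ then \emph{telescopes} to $h\,g(T)=h\,\fisher(\mustar|\gamma^d)$ (using $Y^v_T\equiv0$ by construction). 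The same inequality gives $g(0)\leq\rme^{-T/2}g(T)$ (\Cref{gup}), which after one application of the log-Sobolev inequality for $\gamma^{2d}$ also disposes of $E_1$; your separate hypocoercive entropy functional is not needed. The missing idea in your proposal is precisely this twisted Fisher-type functional and the telescoping it enables.
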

\begin{theorem}\label{theo_relative_kOU}
Let $T \ge 1$ and  assume \Cref{ass:hyp_on_mustar}-\Cref{ass:estimation-relative-G}. Consider the EI scheme \\$(\app_t)_{t\in [0,T]}$ with constant step size $h>0$ defined by \eqref{eq:SGM-kOU-Based}. Denoting for any $t \in [0,T]$ by $\applaw_t$ the distribution of $\app_{t}$, \eqref{i_kOU} holds with 
 with 
  $\mathbf{C}(T,\varepsilon) = \varepsilon^2 \fisher(\mustar|\gamma^d)$.   

\end{theorem}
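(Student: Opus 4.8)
The plan is to transpose to the degenerate kinetic dynamics \eqref{FEG}--\eqref{BEG} the stochastic control analysis behind \Cref{theo_relative_OU_just_fisher}; throughout, $\fisher(\mustar|\gamma^{2d})$ as it appears in \eqref{i_kOU} is read as $\fisher(\mustar\otimes\gamma^d|\gamma^{2d})$, which coincides with $\fisher(\mustar|\gamma^d)$ since the velocity marginal of $\mustar\otimes\gamma^d$ is $\gamma^d$. First I would reduce to a path-space estimate. Writing $P^{\baX,\baV}_{[0,T]}$ and $P^{\app,\appl}_{[0,T]}$ for the laws of the exact reversal \eqref{BEG} and of the EI scheme \eqref{eq:SGM-kOU-Based}, the data-processing inequality (evaluate at time $T$, then project on the position coordinate) gives $\KL(\mustar|\applaw_T)\le\KL(P^{\baX,\baV}_{[0,T]}|P^{\app,\appl}_{[0,T]})$. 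Since only the velocity coordinate is driven by the Brownian motion, with constant diffusion coefficient, and the two systems differ solely through their drifts and their initial laws, Girsanov's theorem together with the entropy chain rule yields
\begin{equation*}
\KL\bigl(P^{\baX,\baV}_{[0,T]}\,\big|\,P^{\app,\appl}_{[0,T]}\bigr)= \KL\bigl((\mustar\otimes\gamma^d)P_T\,\big|\,\gamma^{2d}\bigr) + \tfrac18\sum_{k=0}^{N-1}\int_{t_k}^{t_{k+1}}\mathbb{E}\Bigl[\norm{4\nabla_v\log\ptilde_{T-t}(\baX_t,\baV_t)-\tildes_{\thetas}(T-t_k,\baX_{t_k},\baV_{t_k})}^2\Bigr]\rmd t\eqsp,
\end{equation*}
where the linear part of the drift cancels thanks to the exponential-integrator construction. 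Adding and subtracting $4\nabla_v\log\ptilde_{T-t_k}(\baX_{t_k},\baV_{t_k})$ inside the norm splits the sum into a \emph{score-estimation} term and a \emph{discretization} term.

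The initialization term $\KL((\mustar\otimes\gamma^d)P_T|\gamma^{2d})$ I would control through the hypocoercive exponential decay of relative entropy along the kOU semigroup: combining the log-Sobolev inequality for $\gamma^{2d}$ with a hypocoercivity estimate gives $\KL((\mustar\otimes\gamma^d)P_T|\gamma^{2d})\lesssim\rme^{-T/2}\fisher(\mustar\otimes\gamma^d|\gamma^{2d})=\rme^{-T/2}\fisher(\mustar|\gamma^d)$, and similarly the alternative bound $(\Mtt_2^2+d)\rme^{-T/2}$. For the score-estimation term, $(\baX_{t_k},\baV_{t_k})$ has the law of $(\foX_{T-t_k},\foV_{T-t_k})$, so \Cref{ass:estimation-relative-G} bounds each summand by $\varepsilon^2\mathbb{E}[\norm{\nabla_v\log\ptilde_{T-t_k}(\foX_{T-t_k},\foV_{T-t_k})}^2]$; after summing and using $h\le1,T\ge1$ this term is at most a constant times $\varepsilon^2\int_0^T\mathbb{E}[\norm{\nabla_v\log\ptilde_{T-t}(\foX_{T-t},\foV_{T-t})}^2]\rmd t$. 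I would then invoke the de Bruijn--type entropy dissipation identity for the kinetic Fokker--Planck equation, stating that $\partial_t\KL((\mustar\otimes\gamma^d)P_t|\gamma^{2d})$ equals, up to a positive constant, minus $\int\norm{\nabla_v\log\ptilde_t}^2\fom_t$; integrating over $[0,\infty)$ and using the log-Sobolev inequality for $\gamma^{2d}$ gives $\int_0^\infty\mathbb{E}[\norm{\nabla_v\log\ptilde_t(\foX_t,\foV_t)}^2]\rmd t\lesssim\KL(\mustar\otimes\gamma^d|\gamma^{2d})\lesssim\fisher(\mustar|\gamma^d)$. Hence the score-estimation term is $\lesssim\varepsilon^2\fisher(\mustar|\gamma^d)$, which is the announced $\mathbf{C}(T,\varepsilon)$.

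It remains to bound the discretization term, which is the heart of the argument and follows \Cref{PMP}. I would study the relative score process $Y_t=4\nabla_v\log\ptilde_{T-t}(\baX_t,\baV_t)$, whose Itô differential has a closed form $\rmd Y_t=b_t\,\rmd t+Z_t\,\rmd B_t$ that is precisely the adjoint equation in a stochastic maximum principle, with $b_t$ and $Z_t$ built out of first- and second-order space derivatives of $\log\ptilde_{T-t}$. The goal is an estimate $\mathbb{E}[\norm{Y_t-Y_{t_k}}^2]\lesssim(t-t_k)\,\kappa(T-t)$ on $[t_k,t_{k+1}]$ with $\int_0^T\kappa\lesssim\fisher(\mustar|\gamma^d)$, so that summation over $k$ bounds the discretization term by $\lesssim h\fisher(\mustar|\gamma^d)$. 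The main obstacle is the control of the second-order contributions to $b_t$ and $Z_t$: because \eqref{FEG} is hypoelliptic there is no elliptic smoothing producing pointwise Hessian bounds on $\log\ptilde_t$, so instead I would propagate gradient and Hessian estimates along the flow via a hypocoercive analogue of the Bakry--\'Emery $\Gamma_2$ calculus adapted to the generator of \eqref{FEG} --- equivalently, by exploiting the explicit Gaussian form of the kOU transition kernel --- and show that the resulting quadratic functionals are time-integrable against $\fisher(\mustar\otimes\gamma^d|\gamma^{2d})$. Collecting the initialization, score-estimation and discretization contributions then gives \eqref{i_kOU} with $\mathbf{C}(T,\varepsilon)=\varepsilon^2\fisher(\mustar|\gamma^d)$, exactly mirroring the overdamped result \Cref{theo_relative_OU_just_fisher}.
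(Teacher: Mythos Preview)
Your Girsanov decomposition into initialization, score-estimation, and discretization terms is correct and matches the paper, as does the hypocoercive bound on $E_1$. The substantive divergence from the paper --- and the place where your proposal has a real gap --- is that you never introduce the twisted functional
\[
g(t)=\mathbb{E}\bigl[\norm{Y^v_t}^2\bigr]+\mathbb{E}\bigl[\norm{Y^v_t-Y^x_t}^2\bigr],\qquad Y^x_t=4\nabla_x\log\ptilde_{T-t}(\baX_t,\baV_t),
\]
which is the paper's central device for the kinetic case. The It\^o computation on the pair $(Y^v,Y^x)$ (\Cref{PMPG}) gives the differential inequality $g(t)-g(s)\ge\int_s^t\bigl(g(r)+\mathbb{E}[\norm{Z^{vv}_r}_{\Fr}^2]\bigr)\rmd r$ and hence $g(s)\lesssim\rme^{-(t-s)/2}g(t)$ (\Cref{gup}). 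Both $E_2$ and $E_3$ are then controlled through $g$, using $g(T)=\fisher(\mustar|\gamma^d)$ since $Y^v_T\equiv0$.

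For $E_2$, the paper bounds $\mathbb{E}[\norm{Y^v_{kh}}^2]\le g(kh)$ and then sums the geometric series coming from \Cref{gup}. Your alternative via the de Bruijn identity is appealing, but the step ``after summing\ldots this term is at most a constant times $\varepsilon^2\int_0^T\mathbb{E}[\norm{\nabla_v\log\ptilde_{T-t}}^2]\rmd t$'' is unjustified: in the kinetic case $t\mapsto\mathbb{E}[\norm{Y^v_t}^2]$ has no reason to be monotone (precisely the hypocoercive phenomenon), so you cannot pass from the Riemann sum to the integral. What \emph{is} monotone is $g$, and once you use that you are back to the paper's argument.

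For $E_3$, your target estimate $\mathbb{E}[\norm{Y_t-Y_{t_k}}^2]\lesssim(t-t_k)\kappa(T-t)$ with integrable $\kappa$ is not what the paper proves. The paper shows $\mathbb{E}[\norm{Y^v_t-Y^v_{kh}}^2]\lesssim g((k+1)h)-g(kh)$, with no $(t-t_k)$ factor, and then telescopes to $hg(T)$. The reason this works is that the drift of $Y^v$ involves $Y^x$, not only $Y^v$; controlling the square increment therefore forces you to track $\norm{Y^x}^2$ (or equivalently $\norm{Y^v-Y^x}^2$) as well, which is exactly why the mixed functional $g$ enters. Your sketch (``propagate Hessian estimates via a hypocoercive $\Gamma_2$ calculus'') is the right intuition, but the concrete realization is the choice of $g$ above; without it the argument does not close.
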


\begin{proof}
  The proof of \Cref{theo_absolute_kOU} and \Cref{theo_relative_kOU} are postponed to \Cref{sec:kou-case}. 
\end{proof}
\subsection{Related works and comparison with existing literature}\label{sec:comparison}
The great achievements of SGMs in real world applications have triggered an intense research activity aiming at providing a theoretical justification of their performances. Roughly speaking, there are two types of results available in the literature. 
The first category consists of results that require some form of smoothness on the data distribution and in turn are able to compare directly $p^{\thetastar}_T$ with $\mustar$ is some strong divergence or metric such as $\TV$ or $\KL$. The second family of results are results that make no assumptions on the data beyond some moment conditions but are not able to directly compare $p^{\thetastar}_T$ with $\mustar$. The strategy in this case is to introduce an early stopping rule, i.e., to  fix $\delta>0$ and compare $p^{\thetastar}_{T-\delta}$ with a smoothed version of $\mustar$, namely $\mustar P_{\delta}$ as done in \Cref{coro:theo_exponential_then_linear}. By bounding the distance between $\mustar P_{\delta}$ and $\mustar$, one can eventually obtain bounds in some weaker metric, typically the Fortet-Mourier metric \cite{rachev1985monge} or, under stronger assumptions on the data, Wasserstein distance. However, there seems to be no result available in the literature that compares \emph{directly} $p^{\thetastar}_T$ with $\mustar$ under the only assumption that the data are square integrable. The results (except \Cref{coro:theo_exponential_then_linear}) of this article fall in the first category; our main contribution is to show that one can obtain bounds that are at least as precise, and often better than those available under much weaker assumptions. In particular, we get rid of any form of Lipschitzianity assumption on the score, that is always present in former results, and replace it with a mere integrability assumption, the finiteness of the Fisher information. Before moving on with a precise comparison of our findings with existing results, let us give a brief overview of some of the most relevant recent contributions in the field.

\begin{enumerate}[label=$\bullet$,wide, labelwidth=!, labelindent=0pt]
\item \underline{Results without early stopping} Earlier works on convergence bounds for SGMs required strong assumption on the data distribution beyond Lipschitzianity of the score such as a dissipativity condition \cite{block2020generative}, the manifold hypothesis \cite{de2022convergence} or $\rmL^{\infty}$-bounds on the score approximation \cite{de2021diffusion} and often failed to exhibit convergence bounds with polynomial complexity. The work \cite{de2022convergence} obtains convergence guarantees in $1$-Wasserstein distance assuming that the data distribution satisfies the so-called manifold hypothesis. Imposing that the data distribution satisfies a logarithmic Sobolev inequality \cite{lee2022convergence,wibisono2022convergence} obtain bounds with polynomial complexity. Subsequently, in the paper \cite{chen2022sampling} the authors managed to drop the hypothesis that the data distribution satisfy a functional inequality and to include the kinetic Ornstein-Uhlenbeck in the analysis by assuming weak $L^2$-bounds on the score approximation and Lipschitzianity of the score. Moreover, this work introduces a Girsanov change of measure argument that we shall systematically employ in this work (see also \cite{liu2022let} for similar ideas). The recent work \cite{chen2022improved} obtains results for constant step size discretization that are comparable to those in \cite{chen2022sampling}. Moreover, their Theorem 2.5 improves on the results of \cite{chen2022sampling} by showing that, using an exponentially decreasing then linear step size one can obtain bounds whose complexity is logarithmic in the Lipschitz constant of the score. In addition, Lipschitzianity of the score is assumed there only at the intial time, and not for the whole trajectory. However, the dimensional dependence of these bounds is quadratic instead of linear.  Finally, let us mention the recent preprint \cite{pedrotti2023improved} where the authors study a model that is different from all the ones mentioned above in that the dynamics is deterministic and does not contain a stochastic term. Therefore results of this paper, though of clear interest, are not directly comparable to the ones discussed above. We note however, that (one-sided) Lipschitzianity of the score is a key assumption even in this work.
\item \underline{Results with early stopping}
Using an early stopping rule, \cite{chen2022sampling,lee2023convergence} are able to treat any data distribution with bounded support. An improvement over these results is obtained in \cite{chen2022improved} where, adopting an exponentially decreasing step size and using a refined estimate on the short time behavior of the derivative of the score, the authors are able to cover any distribution with finite second moment. 
The work \cite{benton2023linear}, appearing roughly at the same time than the present work on Arxiv, obtains a result encompassed in our \Cref{coro:theo_exponential_then_linear}, for which we only assume finite second order moment of the data distribution. They do not consider neither bounds without early stopping or SGMs based on the kinetic Langevin diffusion.
\end{enumerate}
\subsubsection{Contribution of this work}
\begin{enumerate}[label=$\bullet$,wide, labelwidth=!, labelindent=0pt]
\item \underline{Results for OU}
Bounds for $\KL(\mu^{\star}|p^{\thetastar}_T)$ for constant step size have recently been obtained, in several works \cite{chen2022sampling,lee2022convergence,lee2023convergence,chen2022improved}. At the moment of writing and to the best of our knowledge, the article \cite{chen2022improved} is the current state of the art; results under weaker assumptions (just $\Mtt_2^2<+\infty$) exist but require to introduce an early stopping rule and cannot be expressed in terms of $\KL(\mu^{\star}|p^{\thetastar}_T)$. In the following table we offer a synthetic comparison between Theorem 2.1 in \cite{chen2022improved} and our Theorem \ref{theo_absolute_OU_just_fisher}.
\begin{table}[h]
\centering
    \caption{Bounds on $\KL(\mu^*|p^{\thetastar}_T)$ for OU with constant step-size.}\par\medskip
\scalebox{0.9}{\begin{tabular}{|c|c|c|}
	\hline 
	Assumptions & Related & Error\\ on the data & References &  bound \\
	\hline
 \Cref{ass:estimation-absolute}  &  & \\ $\Mtt_2^2<+\infty$ & \cite[Theo 2.1]{chen2022improved}  & $(\Mtt^2_2+d)\rme^{-T}+T\varepsilon^2 +dhL^2T$ \\ $\nabla\log \fom_t\;$ $L-$ Lipschitz  & & \\
  \hline
    \Cref{ass:estimation-absolute} & &  \\  & Theorem \ref{theo_absolute_OU_just_fisher} & $(\Mtt_2^2+d)\rme^{-T}+T\varepsilon^2+h(dL+\Mtt^2_2)$ \\ $\fisher(\mustar|\gamma^d)\leq dL+\Mtt_2^2$  & &\\
   \hline
      \Cref{ass:estimation-relative} & &  \\  & Theorem \ref{theo_relative_OU_just_fisher} & $(\Mtt_2^2+d)\rme^{-T}+(\varepsilon^2+h)(dL+\Mtt_2^2)$ \\ $\fisher(\mustar|\gamma^d)\leq dL+\Mtt_2^2$  & &\\
   \hline
\end{tabular}}
\end{table}

\begin{table}[h]
\centering
    \caption{Number of iterations to get $\KL(\mu^*|p^{\thetastar}_T) \lesssim \varepsilon^2$ for OU with constant and exponentially decreasing step sizes.}\par\medskip
\begin{tabular}{|c|c|c|}
	\hline 
	Assumptions & Related & Steps to get\\ on the data & References & $\tilde{O}(\varepsilon^2)$ error \\
	\hline
 \Cref{ass:estimation-absolute}  &  & \\ $\Mtt_2^2<+\infty$ & \cite[Theo 2.5]{chen2022improved}  & $d^2\log^2(L)/\varepsilon^2$ \\ $\nabla\log \fom_0\;$ $L-$ Lipschitz  &\\
  \hline
    \Cref{ass:estimation-absolute} & &  \\  & Theorem \ref{theo_exponential_then_linear} & $(d+\Mtt_2)\log^2(L)/\varepsilon^2$ \\  $\fisher(\mustar|\gamma^d)\leq Ld+\Mtt^2_2$ & &\\
   \hline
\end{tabular}
\end{table}

\begin{table}[h]
\centering
    \caption{Bounds on $\KL(\mu^*P_{\delta}|p^{\thetastar}_{T-\delta})$ for OU  with constant and exponentially decreasing step sizes.}\par\medskip
\begin{tabular}{|c|c|c|}
	\hline 
	Assumptions & Related & Error\\ on the data & References &  bound \\
	\hline
  \Cref{ass:estimation-absolute}  &  & \\ $\Mtt_2^2<+\infty$ & \cite[Theo 2.2]{chen2022improved}  & $(\Mtt^2_2+d)\rme^{-T}+T\varepsilon^2 +d^2(T+\log\delta^{-1})^2/N$ \\ 
  \hline
  \Cref{ass:estimation-absolute} & &  \\  & \Cref{coro:theo_exponential_then_linear}  & $\rme^{-T}\{\Mtt_2^2 +d\} + T\varepsilon^2 $\\
$\Mtt_2^2<+\infty$& &    $+ c [(d+\Mtt_2^2)( \log(d +\Mtt^2) + \log(\delta^{-1}) +1)]$  \\
   \hline
\end{tabular}
\label{fig:comp_early_stopp}
\end{table}

For results with constant step size, when comparing assumptions, we observe that those of \cite[Theorem 2.1]{chen2022improved} or \cite[Theorem 2]{chen2022sampling} are considerably stronger than the condition $\fisher(\mustar|\gamma^d)\leq dL+\Mtt_2^2$ reported in the table above. Indeed, if $\nabla \log \mustar$ is $L$-Lipschitz, using integration by parts we obtain
\begin{equation}
\fisher(\mustar|\gamma^d) \lesssim \int \norm{\nabla\log\mustar}^2 \rmd \mustar + \Mtt_2^2 = -\int \Delta \log \mustar \rmd\mustar+\Mtt_2^2 \leq dL+\Mtt_2^2 \eqsp.
\end{equation}
When comparing the error bounds on $\KL(\mustar|p^{\thetastar}_T)$, the first two terms coincide. In the third term, that corresponds to the discretization error, the bound provided by Theorem \ref{theo_absolute_OU_just_fisher} is independent of $T$ and depends linearly in $L$ instead of quadratically. The fourth additional in \ref{theo_absolute_OU_just_fisher} is not relevant, since typically $\Mtt_2^2 \lesssim d$. Thus, Theorem \ref{theo_absolute_OU_just_fisher} is an improvement over \cite[Theorem 1]{chen2022improved}.

For results with exponential-then-constant step size, we are able to improve the bound of \cite[Theorem 2.5]{chen2022improved} in that the dependence of the dimension is linear and not quadratic. Note that we have an extra dependence on $\Mtt^2_2$ here; in the typical situation where $\Mtt^2_2 \lesssim d$ this is of no harm, and we retain the linear dependence on the dimension. Moreover, the hypothesis of our Theorem \ref{theo_exponential_then_linear} are much weaker, as we have already explained.

Finally, if $\Mtt_2^2 \leq d$, \Cref{coro:theo_exponential_then_linear} improve upon \cite[Theorem 2.2]{chen2022improved}
since while the first two terms are the same (see \Cref{fig:comp_early_stopp}), the third term is only linear with respect to the dimension and does not depend on $T$.

\item\underline{Results for kinetic OU.}  \cite{chen2022sampling} analyzed the convergence properties of SGMs based on the kOU process. 
\begin{table}[h]
\centering
    \caption{Bounds on $\KL(\mu^*|p^{\thetastar}_T)$ for kOU with constant step-size.}\par\medskip
\begin{tabular}{|c|c|c|}
	\hline 
	Assumptions & Related & Error\\ on the data & References &  bound \\
	\hline
  \Cref{ass:estimation-absolute}  &  & \\ $\fisher(\mustar|\gamma^d)<+\infty$ & \cite[Theo 6]{chen2022sampling}  & $(\KL(\mustar|\gamma^d)+\fisher(\mustar|\gamma^d))\rme^{-2cT}$ \\
  & &$ +\varepsilon^2 T+L^2Th(d+\Mtt_2^2h)$ \\ $\nabla_v\log \fom_t\;$ $L-$ Lipschitz  &  & \\
  \hline
    \Cref{ass:estimation-absolute} & &  \\  & Theorem \ref{theo_absolute_kOU} & $ \fisher(\mustar|\gamma^d)\rme^{-T} +\varepsilon^2 T + h \fisher(\mustar|\gamma^d)$ \\ $\fisher(\mustar|\gamma^d)<+\infty$  & &\\
   \hline
    \Cref{ass:estimation-absolute} & &  \\  & Theorem \ref{theo_relative_kOU} & $ \fisher(\mustar|\gamma^d)\rme^{-T} +(\varepsilon^2  + h) \fisher(\mustar|\gamma^d)$ \\ $\fisher(\mustar|\gamma^d)<+\infty$  & &\\
   \hline
\end{tabular}
\end{table}
Note that the bounds of \cite[Theorem 6]{chen2022sampling} are expressed through the total variation distance between $\mustar$ and $p^{\thetastar}_T$. However, since the bound in total variation is obtained from a bound in relative entropy through Pinsker's inequality (see Theorem 15 and the proof of Theorem 6 in \cite{chen2022sampling}), we prefer to report them in their entropic formulation to enable a clear comparison with our findings. In terms of assumptions, clearly those of Theorem \ref{theo_absolute_kOU} are considerably weaker as there is no requirement on the Lipschitzianity of the score. In contrast with the OU case, we cannot directly show that the Lipschianity of $\nabla_v\log p_t$ directly implies a bound on the Fisher information. However, it is likely that the term $h\fisher(\mustar|\gamma^d) $ is smaller than $\varepsilon^2 T+L^2Th(d+\Mtt_2^2h)$ for example because the former does not depend on the time horizon $T$. Finally, we remark that the constant $c$ appearing in the bound of \cite[Theorem 6]{chen2022sampling} is not made explicit by the authors.
\end{enumerate}
\section{Proofs}
\subsection{OU case}
We begin by justifying the smoothness of the map 
\begin{equation}\label{mapm}
    (0, T]\times \rset^d \ni (t,x) \mapsto \fom_t (x) \in \rset_+\eqsp, 
  \end{equation}
with $\fom_t$ density of the forward process defined in \eqref{FE},
\begin{proposition}\label{mapm_smooth}
  Assume that the data distribution  $\mustar$  is absolutely continuous with respect to the Lebesgue measure and denote by $\fom_0$ its density.    The map defined in \eqref{mapm}  is positive and solution of the Fokker-Planck equation on $\ocint{0,T}\times \rset^d$: for $(t,x) \in (0,T]\times \rset^d$,
  \begin{equation}
    \label{eq:1}
            \partial_t \fom_t(x) -\div (x  \fom_t) -\Delta \fom_t(x) =0 \eqsp,
          \end{equation}
          where $\div,\Delta$ are the divergence and Laplacian operator with respect to the space variable $x$, respectively. 
In addition, it belongs to $\mrC^{1,2}((0,T]\times \rset^d)$, \ie , for any $t \in (0,T)$, $x \mapsto \fom_t(x)$ is twice continuously differentiable and for any $x \in \rset^d$, $t\mapsto \fom_t(x)$ is continuously differentiable on $(0,T]$.
\end{proposition}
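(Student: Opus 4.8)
The plan is to exploit the fact that, for the Ornstein--Uhlenbeck forward dynamics \eqref{FE}, the marginal law of $\foX_t$ is an explicit Gaussian mixture. Solving \eqref{FE} pathwise gives $\foX_t = \rme^{-t}\foX_0 + \sqrt{2}\int_0^t \rme^{-(t-s)}\,\rmd B_s$, so that conditionally on $\foX_0 = y$ the vector $\foX_t$ is Gaussian with mean $\rme^{-t}y$ and covariance $\sigma_t^2 \Idd$, where $\sigma_t^2 := 1 - \rme^{-2t}$. Writing $g_t(x,y) := (2\pi\sigma_t^2)^{-d/2}\exp\bigl(-\norm{x-\rme^{-t}y}^2/(2\sigma_t^2)\bigr)$, we obtain the representation
\[
\fom_t(x) = \int_{\rset^d} g_t(x,y)\,\mustar(\rmd y)\eqsp, \qquad (t,x)\in\ocint{0,T}\times\rset^d\eqsp,
\]
which in fact holds for any $\mustar\in\mathcal{P}(\rset^d)$ (absolute continuity is only needed to name $\fom_0$). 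Since $\sigma_t^2>0$ for $t>0$, the kernel $g_t(x,y)$ is strictly positive for every $(x,y)$, and because $\mustar$ is a probability measure this immediately yields $\fom_t(x)>0$ on $\ocint{0,T}\times\rset^d$.

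For the $\mrC^{1,2}$ regularity I would differentiate under the integral sign. The map $(t,x,y)\mapsto g_t(x,y)$ is $\mathrm{C}^\infty$ on $\ocint{0,T}\times\rset^d\times\rset^d$, since $t\mapsto\sigma_t^2$ is smooth and bounded away from $0$ on $\ccint{t_0,T}$ for every $t_0\in(0,T)$, and by the chain rule each partial derivative $\partial_t^j\partial_x^\alpha g_t(x,y)$ equals $g_t(x,y)$ times a polynomial in the entries of $y$, $x$, $\rme^{-t}$ and $\sigma_t^{-2}$. The elementary bound $\norm{x-\rme^{-t}y}^2 \geq \tfrac{1}{2}\rme^{-2t}\norm{y}^2 - 2\norm{x}^2$ then shows that, for $(t,x)$ in any compact $K\subset\ocint{0,T}\times\rset^d$, one has $\absLigne{\partial_t^j\partial_x^\alpha g_t(x,y)} \leq C_K(1+\norm{y})^{m}\rme^{-c_K\norm{y}^2}$ with $c_K>0$ and $m=m(j,\alpha)$; as $\mustar$ is a probability measure, the right-hand side is $\mustar$-integrable. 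Dominated convergence then allows differentiating under the integral to any order, so $(t,x)\mapsto\fom_t(x)$ is $\mathrm{C}^\infty$, a fortiori $\mrC^{1,2}$, on $\ocint{0,T}\times\rset^d$.

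It remains to check the Fokker--Planck equation \eqref{eq:1}. For each fixed $y$, the Gaussian $(t,x)\mapsto g_t(x,y)$ is precisely the transition density of the OU diffusion \eqref{FE} issued from $y$, hence solves the forward Kolmogorov equation $\partial_t g_t(\cdot,y) - \div(x\,g_t(\cdot,y)) - \Delta g_t(\cdot,y) = 0$ on $\ocint{0,T}\times\rset^d$; this is classical and, if preferred, can be verified by a direct computation using $\partial_t\sigma_t^2 = 2\rme^{-2t} = 2(1-\sigma_t^2)$. Integrating this identity against $\mustar(\rmd y)$ and interchanging the $t$- and $x$-derivatives with the integral---which is justified by exactly the domination established above---yields \eqref{eq:1} for $\fom_t$. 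The only genuinely delicate point in the whole argument is this domination step: one must confirm that the Gaussian decay in $y$ holds uniformly as $(t,x)$ runs over compacts of $\ocint{0,T}\times\rset^d$ and absorbs the polynomial factors generated by the derivatives; everything else is either an explicit Gaussian computation or a routine appeal to dominated convergence.
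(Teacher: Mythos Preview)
Your proof is correct and takes a more elementary route than the paper's. The paper first observes that $\fom_\delta$ is continuous for each fixed $\delta>0$ (via dominated convergence and the explicit OU transition kernel), and then appeals to general regularity theory for Fokker--Planck equations with smooth coefficients from \cite{bogachev2022fokker} (Theorems~6.6.1 and~9.4.3), together with uniqueness results from \cite{figalli2008existence} and \cite{stroock1997multidimensional}, to identify $(\fom_t)_{t\in[\delta,T]}$ as the unique solution and conclude that it is $\mrC^{1,2}$. Your argument instead exploits the Gaussian mixture representation directly: you bound each derivative of $g_t(x,y)$ by $g_t(x,y)$ times a polynomial in $y$, and use the quadratic lower bound on $\norm{x-\rme^{-t}y}^2$ to produce a Gaussian-in-$y$ dominating function uniformly on compacts of $\ocint{0,T}\times\rset^d$. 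This buys you a fully self-contained proof with no external PDE references, and in fact yields $\mrC^{\infty}$ smoothness rather than just $\mrC^{1,2}$; the paper's route is shorter modulo citations and would generalise more readily to forward diffusions whose transition kernels are not explicit.
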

The proof of this well known result is reported in \Cref{sec:proof-crefm11} for the readers'convenience.
Using \Cref{mapm_smooth}, by a simple computation, we get that $\ptilde_{t}(x)$ is a classical solution on $(0,T]\times\mathbb{R}^d$ of
\begin{equation}\label{eq:forward_kolmogorov}
\Delta \ptilde_{t} - \langle \nabla \ptilde_t(x),x\rangle = \partial_t \ptilde_{t}(x)   \eqsp, \quad (t,x) \in \ocint{0,T}\times \rset^d\eqsp.
\end{equation}

Our proof strategy revolves around the study of what we call the \emph{relative score process} $(Y_t)_{t \in[0,T]}$, defined by
  \begin{equation}
    \label{eq:def_Y_t}
Y_t :=    2\nabla \log \ptilde_{T-t}(\baX_t) \eqsp,
\end{equation}
where $(\baX_t)_{t\in[0,T]}$ is the backward process defined by \eqref{BE}. $(Y_t)_{t\in[0,T]}$ is naturally connected to the problem under consideration: indeed the SDE \eqref{BE} writes
\begin{equation}
\rmd\baX_t=\{-\baX_t + Y_t\}\rmd t+\sqrt{2} \rmd B_t \eqsp, \quad t\in [0,T]\eqsp,\quad \baX_0 \sim \mustar P_T \eqsp.
\end{equation}

In the next proposition we show that the relative score process satisfies a SDE, that gains a natural interpretation as the adjoint equation in the stochastic maximum principle (SMP)\cite{krylov2008controlled,yong1999stochastic}. In particular, we shall use this equation to study properties of the score, such as its $\Ltwo$-norm along the backward process defined for $t \in \ccint{0,T}$ by
\begin{equation}
  \label{eq:def_g}
\gY(t):=\mathbb{E}[\norm{Y_t}^2] \eqsp.
\end{equation}

\begin{proposition}\label{PMP}
  Let $\delta$ be an arbitrarily fixed positive constant.
  Then it holds\begin{equation}\label{HJB}
    \rmd Y_t=Y_t \rmd t+ \sqrt{2} Z_t \rmd B_t\eqsp, \quad t\in [0,T-\delta]\eqsp,
  \end{equation}
  where  $  Z_t=2 \nabla ^2 \log \ptilde_{T-t}(\baX_t)$.
If moreover $\mustar$ has finite eighth-order moment then for any $0\le s\le t\le T-\delta$ ,
\begin{equation}\label{up2}
   \gY(t)-\gY(s) = \int_{s}^{t} \left(2\gY(r) + 2\mathbb{E}\left[\norm{Z_r}_{\Fr}^2\right]\right) \rmd r\eqsp.
\end{equation}
\end{proposition}

\begin{remark} \label{ptilde_smooth}
    For any fixed $\delta>0$, $(Z_t)_{t\in [0, T-\delta]}$ is well-defined because \Cref{mapm_smooth}
implies that, for any $\delta>0$, $(t,x) \mapsto \Tilde{p}_{T-t}(x)\in\mrC^{1,2}([0, T-\delta]\times \rset^d)$. 
\end{remark}
\begin{proof}[Proof of  \Cref{PMP}:] 
  Dividing by $\ptilde_{T-t}$ in \eqref{eq:forward_kolmogorov} and using the fact that
  \begin{equation}
  \Delta \log \ptilde_{T-t} = \div \left(\frac{\nabla \ptilde_{T-t}}{\ptilde_{T-t}}\right) = \frac{\Delta \ptilde_{T-t}}{\ptilde_{T-t}}- \norm{\frac{\nabla \ptilde_{T-t}}{\ptilde_{T-t}}}^2=\frac{\Delta \ptilde_{T-t}}{\ptilde_{T-t}} - \norm{\nabla \log \ptilde_{T-t}}^2\eqsp,\label{eq:3}
\end{equation}
we get that $(\Phi_t)_{t \in \ccint{0,T-\delta}}:= ( \log \ptilde_{T-t})_{t \in \ccint{0,T-\delta}}$ solves the Hamilton-Jacobi-Bellman equation
\begin{equation}\label{almost}
    \partial_t \Phi_t(x) + \Delta \Phi_t(x) + \norm{\nabla \Phi_t}^2(x) - \ps{x}{ \nabla \Phi_t(x)}=0\quad \text{on} \quad [0, T-\delta] \times \rset^d\eqsp.
\end{equation}
Since $Y_t=2\nabla \Phi_t (\baX_t)$, the result is now a straightforward consequence of \Cref{mapm_smooth}, It\^o formula and \eqref{almost}: 
\begin{equation}
\begin{split}
\rmd Y_t&=2\{\partial_t \nabla \Phi_t (\baX_t) +\Delta \nabla \Phi_t (\baX_t)+\nabla^2 \Phi_t (\baX_t)  (-\baX_t + 2\nabla\Phi_t(\baX_t)) \}\rmd t  \\
&+ 2\sqrt{2} \nabla^2 \Phi_t (\baX_t)\rmd B_t\\
&= 2\{\nabla (\partial_t \Phi_t +\Delta\Phi_t+\norm{\nabla\Phi_t}^2)(\baX_t) - \nabla^2 \Phi_t (\baX_t)  \baX_t \}\rmd t+\sqrt{2} Z_t \rmd B_t\\
&\stackrel{\eqref{almost}}{=} 2\nabla \Phi_t(\baX_t)\rmd t +\sqrt{2} Z_t \rmd B_t= Y_t \rmd t +\sqrt{2} Z_t \rmd B_t \eqsp, \quad t\in [0,T-\delta]\eqsp.
\end{split}
\end{equation}But then, using once again It\^o formula, we get 
\begin{equation}\label{eq:corrector}\rmd \norm{Y_t}^2= (2\norm{Y_t}^2+2\norm{Z_t}^2_{\Fr}) \rmd t + 2\sqrt{2} Y_t^{\transpose}  Z_t \rmd B_t, \quad t\in [0, T-\delta]\eqsp.\end{equation}
We now use the following lemma to show that, under the additional assumption of finite eighth-order moment for $\mustar$, 
the process $(\int_{0}^{t} Y_s^{\transpose} Z_s \rmd B_s)_{s\in [0,T-\delta]}$ is a true martingale and not just a local martingale. The proof of this technical result is deferred to \Cref{sec:proof-crefnull_e}.
\begin{lemma}\label{null_E}
    Assume that $\mustar$ has finite eighth-order moment. Then it holds \\$\PE[\int_0^{T-\delta} \norm{Y_s^{\transpose} Z_s}^2\rmd s] < \plusinfty$. In particular, $(\int_{0}^{t} Y_s^{\transpose} Z_s \rmd B_s)_{t\in [0,T-\delta]}$ is a martingale.
\end{lemma}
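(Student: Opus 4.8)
The plan is to prove the stronger statement $\mathbb{E}\bigl[\int_0^{T-\delta}\norm{Y_s^{\transpose}Z_s}^2\,\rmd s\bigr]<\plusinfty$ and then to conclude by the standard fact that a stochastic integral $\int_0^{\cdot}\Sigma_s\,\rmd B_s$ whose integrand satisfies $\mathbb{E}\bigl[\int_0^{T-\delta}\norm{\Sigma_s}^2\,\rmd s\bigr]<\plusinfty$ is a genuine (square-integrable) martingale; applied to $\Sigma_s=Y_s^{\transpose}Z_s$ this is precisely the Brownian part of \eqref{eq:corrector}. Since $\norm{Y_s^{\transpose}Z_s}\le\norm{Y_s}\,\opnorm{Z_s}\le\norm{Y_s}\,\norm{Z_s}_{\Fr}$ and, by Cauchy--Schwarz in $\omega$, $\mathbb{E}[\norm{Y_s^{\transpose}Z_s}^2]\le\mathbb{E}[\norm{Y_s}^4]^{1/2}\,\mathbb{E}[\norm{Z_s}_{\Fr}^4]^{1/2}$, and since $\int_0^{T-\delta}\rmd s<\plusinfty$, it is enough to bound $\mathbb{E}[\norm{Y_s}^4]$ and $\mathbb{E}[\norm{Z_s}_{\Fr}^4]$ uniformly over $s\in[0,T-\delta]$. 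Using the fact that $\baX_s$ has the same law as $\foX_{T-s}$ (recalled below \eqref{eq:diffusion_backward_general}) together with \eqref{eq:def_Y_t} and \eqref{eq:def_Z}, this in turn reduces to
\begin{equation}
\sup_{t\in[\delta,T]}\mathbb{E}\bigl[\norm{\nabla\log\ptilde_t(\foX_t)}^4\bigr]<\plusinfty\eqsp,\qquad\sup_{t\in[\delta,T]}\mathbb{E}\bigl[\norm{\nabla^2\log\ptilde_t(\foX_t)}_{\Fr}^4\bigr]<\plusinfty\eqsp,
\end{equation}
the key structural point being that on $[0,T-\delta]$ the relevant times $T-s$ stay in $[\delta,T]$, hence bounded away from the degenerate time $0$.

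For these two bounds I would use Mehler's formula: the solution of \eqref{FE} can be written $\foX_t=\rme^{-t}\foX_0+\sigma_t\xi$ with $\sigma_t^2:=1-\rme^{-2t}$ and $\xi\sim\gamma^d$ independent of $\foX_0\sim\mustar$, so that $\fom_t$ is the Gaussian mixture $\fom_t(x)=\int(2\pi\sigma_t^2)^{-d/2}\exp\bigl(-\norm{x-\rme^{-t}y}^2/(2\sigma_t^2)\bigr)\mustar(\rmd y)$. Differentiating under the integral sign (legitimate for $t\geq\delta$ thanks to the $\mrC^{1,2}$-regularity and positivity of $\fom_t$, hence of $\ptilde_t$, recorded in \Cref{mapm_smooth} and \Cref{ptilde_smooth}) and identifying the conditional law of $\foX_0$ given $\foX_t$ yields the Tweedie-type identities
\begin{align}
\nabla\log\fom_t(x)&=-\sigma_t^{-2}\bigl(x-\rme^{-t}\,\mathbb{E}[\foX_0\mid\foX_t=x]\bigr)\eqsp,\\
\nabla^2\log\fom_t(x)&=\rme^{-2t}\sigma_t^{-4}\,\mathrm{Cov}(\foX_0\mid\foX_t=x)-\sigma_t^{-2}\,\IdM\eqsp,
\end{align}
together with $\nabla\log\ptilde_t(x)=\nabla\log\fom_t(x)+x$ and $\nabla^2\log\ptilde_t(x)=\nabla^2\log\fom_t(x)+\IdM$ coming from \eqref{def-ptilde}. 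On $[\delta,T]$ one has $\sigma_t^{-2}\le\sigma_\delta^{-2}$ and $\rme^{-t}\le1$, so conditional Jensen gives $\norm{\nabla\log\ptilde_t(\foX_t)}\le C_\delta\bigl(\norm{\foX_t}+\mathbb{E}[\norm{\foX_0}\mid\foX_t]\bigr)$, while $0\preceq\mathrm{Cov}(\foX_0\mid\foX_t)$ and $\norm{\mathrm{Cov}(\foX_0\mid\foX_t)}_{\Fr}\le\trace\bigl(\mathrm{Cov}(\foX_0\mid\foX_t)\bigr)\le\mathbb{E}[\norm{\foX_0}^2\mid\foX_t]$ give $\norm{\nabla^2\log\ptilde_t(\foX_t)}_{\Fr}\le C_\delta\bigl(\sqrt d+\mathbb{E}[\norm{\foX_0}^2\mid\foX_t]\bigr)$, for some finite $C_\delta$. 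Raising to the fourth power, taking expectations, applying conditional Jensen once more and using $\mathbb{E}[\norm{\foX_t}^p]\le 2^{p}\bigl(\mathbb{E}[\norm{\foX_0}^p]+\mathbb{E}[\norm{\xi}^p]\bigr)$ (uniform in $t\geq0$ since $\rme^{-t},\sigma_t\le1$, and with $\mathbb{E}[\norm{\xi}^p]<\plusinfty$ as $\xi$ is Gaussian), one obtains finiteness of both suprema: the first under a finite fourth moment of $\mustar$, the second under a finite eighth moment of $\mustar$. The latter is exactly the hypothesis of the lemma, and it is forced by the fourth power of the conditional second moment entering the estimate for $Z_s$ (equivalently, by the $\nabla\log\fom_t\otimes\nabla\log\fom_t$ contribution to $\nabla^2\log\fom_t$, which makes $\norm{Z_s}_{\Fr}^4$ control an eighth power of the score).

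Combining the two steps,
\begin{equation}
\mathbb{E}\Bigl[\int_0^{T-\delta}\norm{Y_s^{\transpose}Z_s}^2\,\rmd s\Bigr]\le(T-\delta)\,\sup_{s\in[0,T-\delta]}\mathbb{E}[\norm{Y_s}^4]^{1/2}\,\sup_{s\in[0,T-\delta]}\mathbb{E}[\norm{Z_s}_{\Fr}^4]^{1/2}<\plusinfty\eqsp,
\end{equation}
which is the first claim, and the martingale assertion follows as explained. The single genuinely delicate point is the uniform-in-$t$ moment estimate of the previous paragraph: one must check that the constants degenerate only as $\delta\to0$ — harmless, since $\delta$ is a fixed auxiliary parameter — and remain bounded as $T\to\infty$. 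The latter is ensured by the contractivity of the OU flow, which keeps $\mathbb{E}[\norm{\foX_t}^p]$ bounded uniformly in $t\geq0$, and by $\sigma_t^2\ge\sigma_\delta^2$ on $[\delta,T]$, so that no $T$-dependent factor enters the final bound.
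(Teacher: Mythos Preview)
Your proof is correct and follows essentially the same route as the paper: both reduce via Cauchy--Schwarz to uniform-in-time bounds on $\mathbb{E}[\norm{Y_s}^4]$ and $\mathbb{E}[\norm{Z_s}^4]$, both obtain these by expressing $\nabla\log\fom_t$ and $\nabla^2\log\fom_t$ as conditional expectations of $\foX_0$ given $\foX_t$ via the explicit OU transition kernel, and both conclude by conditional Jensen together with the eighth-moment assumption on $\mustar$. The only cosmetic difference is that you package the Hessian identity as a conditional-covariance (second-order Tweedie) formula, whereas the paper writes out the three terms coming from differentiating the ratio directly.
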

But then, if $\mustar$ has finite eighth-order moment because of the above lemma for any $0\le s \le t \le T-\delta$ it holds
$\txts \mathbb{E}[\int_s^tY_r^{\transpose} Z_r\rmd B_r]=0\eqsp,$
 whence, taking expectations in \eqref{eq:corrector},
 \begin{align}
     \gY(t)&= \mathbb{E}\left[\norm{Y_{t}}^2\right]= \mathbb{E}\left[\norm{Y_{s}}^2\right] + \int_{s}^{t} \mathbb{E}\left[2\norm{Y_{r}}^2+2\norm{Z_{r}}_{\Fr}^2\right] \rmd r\\
     &=\gY(s)+\int_{s}^{t} \left(2\gY(r) + 2\mathbb{E}\left[\norm{Z_r}_{\Fr}^2\right]\right) \rmd r\eqsp,
\end{align}
which concludes the proof. 
\end{proof}
\subsubsection{Proof of Theorem \ref{theo_absolute_OU_just_fisher} and \ref{theo_relative_OU_just_fisher}}
\label{sec:proof-theor-refth}
The following result states that $\gY$ increases exponentially. The reader accustomed with Bakry-\'Emery theory will recognize that its statement is equivalent to the exponential decay of the Fisher information along the OU semigroup. Note that \Cref{gYup} would be a direct consequence of \eqref{up2} if we could take directly  $\delta=0$ in \Cref{PMP}. In this sense, \Cref{gYup} is just a technical extension of \Cref{PMP}. For this reason, its proof is deferred to \Cref{sec:proof-crefgyup}.
\begin{proposition}\label{gYup}
Assume \Cref{ass:hyp_on_mustar}. Then, for any $0\le s\le t\le T$, it holds $\gY (s) \le \rme^{-2(t-s)}\gY (t) \eqsp.$ 
\end{proposition}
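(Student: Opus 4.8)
The key identity is $\gY(t)=4\,\fisher(\mustar P_{T-t}|\gamma^d)$: indeed $\baX_t$ has the same law $\mustar P_{T-t}$ as $\foX_{T-t}$, and $\ptilde_{T-t}=\rmd(\mustar P_{T-t})/\rmd\gamma^d$, so $\gY(t)=\int\norm{2\nabla\log\ptilde_{T-t}}^2\rmd(\mustar P_{T-t})=4\fisher(\mustar P_{T-t}|\gamma^d)$. Thus \Cref{gYup} is equivalent to the exponential contraction $\fisher(\mustar P_v|\gamma^d)\le\rme^{-2(v-u)}\fisher(\mustar P_u|\gamma^d)$ for $0\le u\le v\le T$ of relative Fisher information along the OU semigroup, which is the Bakry--\'Emery estimate attached to the curvature bound $\mathrm{CD}(1,\infty)$ of the OU generator \cite{bakry2014analysis}. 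The route I would take is to establish this contraction directly. Since $(P_u)_{u\ge0}$ is reversible with respect to $\gamma^d$ one has $\ptilde_u=P_u\ptilde_0$; combining the OU gradient--commutation identity $\nabla P_u f=\rme^{-u}P_u(\nabla f)$ with a componentwise Cauchy--Schwarz bound for the Markov kernel $P_u$ yields the pointwise inequality $\absLigne{\nabla P_u\ptilde_0}^2/(P_u\ptilde_0)\le\rme^{-2u}\,P_u\!\bigl(\absLigne{\nabla\ptilde_0}^2/\ptilde_0\bigr)$; integrating against $\gamma^d$ and using its invariance gives $\fisher(\mustar P_u|\gamma^d)\le\rme^{-2u}\fisher(\mustar|\gamma^d)$. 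Applying this with $\mustar P_u$ in place of $\mustar$, together with the semigroup property $\mustar P_v=(\mustar P_u)P_{v-u}$, gives the contraction for all $0\le u\le v\le T$ and hence \Cref{gYup} under \Cref{ass:hyp_on_mustar} alone.

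It is also instructive to see how \Cref{gYup} follows from \eqref{up2}, of which it is the infinitesimal form (with $\mathbb E[\norm{Z_r}_{\Fr}^2]$ in the role of the Bochner/$\Gamma_2$ term). Assume first that $\mustar$ has a finite eighth moment, so that \Cref{PMP} applies on $[0,T-\delta]$ for every $\delta>0$. Then \eqref{up2} shows $\gY$ is absolutely continuous on $[0,T-\delta]$ with $\gY'(r)=2\gY(r)+\mathbb E[\norm{Z_r}_{\Fr}^2]\ge2\gY(r)$, hence $\tfrac{\rmd}{\rmd r}\bigl(\rme^{-2r}\gY(r)\bigr)\ge0$; since $\delta>0$ is arbitrary, $r\mapsto\rme^{-2r}\gY(r)$ is non-decreasing on $[0,T)$, which gives the claim for $0\le s\le t<T$. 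The endpoint $t=T$ is then reached by invoking the closed-form bound of the first paragraph in the form $\gY(t)\le\rme^{-2(T-t)}\gY(T)$, so that $\gY(s)\le\rme^{-2(t-s)}\gY(t)\le\rme^{-2(T-s)}\gY(T)$ for all $s<t<T$, and letting $t\uparrow T$. Finally the eighth-moment hypothesis (needed only in this route) is removed by a routine approximation: take $\mustar^{(n)}$ with $\rmd\mustar^{(n)}/\rmd\gamma^d\propto\ptilde_0(x)\rme^{-\norm{x}^2/(2n)}$, which is sub-Gaussian, hence has finite moments of every order, and satisfies $\mustar^{(n)}\to\mustar$ in total variation and $\fisher(\mustar^{(n)}|\gamma^d)\to\fisher(\mustar|\gamma^d)$ (the extra contributions produced by the Gaussian tilt are $\bigO(1/n)$ and $\bigO(1/n^2)$ since $\Mtt_2^2<\infty$); one applies the result to each $\mustar^{(n)}$ and passes to the limit using lower semicontinuity of Fisher information and the regularising effect of $P_u$ at positive times.

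The one genuinely delicate point in the second route is the passage $\delta\to0$, i.e. reaching $t=T$: \eqref{up2} is available only strictly inside $[0,T)$, and a naive limit would require upper semicontinuity of $u\mapsto\fisher(\mustar P_u|\gamma^d)$ as $u\downarrow0$, which does not follow softly from $\fisher(\mustar|\gamma^d)<\infty$. This is exactly what the explicit computation of the first paragraph bypasses: the gradient--commutation identity $\nabla P_u f=\rme^{-u}P_u(\nabla f)$ is the analytic manifestation of the $\mathrm{CD}(1,\infty)$ curvature of the OU generator --- the Bakry--\'Emery ingredient highlighted in the introduction --- and it makes the whole statement independent of any moment assumption beyond \Cref{ass:hyp_on_mustar}.
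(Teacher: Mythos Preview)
Your first paragraph is correct and is essentially the paper's own proof: the paper likewise writes $\gY$ as a Fisher information, invokes the OU gradient commutation $\nabla P_u f = \rme^{-u}P_u(\nabla f)$, applies Jensen's inequality for the convex perspective function $(x,\eta)\mapsto\norm{x}^2/\eta$ (which is your ``componentwise Cauchy--Schwarz''), and integrates using the invariance of $\gamma^d$. Your second and third paragraphs add a complementary discussion of the route via \eqref{up2} and its $\delta\to0$ obstruction; the paper alludes to this connection but deliberately bypasses it with the direct semigroup argument, exactly as you end up doing.
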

\begin{proof}[Proof of Theorem \ref{theo_absolute_OU_just_fisher}:]

  Let $({\tilde{B}}_t)_{t \geq 0}$be a $d$-dimensional Brownian motion on the complete universe $(\Omega,\mcf,\PP)$ and denote by $(\mcf_t^{\tilde{B}})_{t \geq 0}$ the corresponding generated filtration. Given a time horizon $\tT>0$ and a constant $\sigma>0$, consider two diffusion-type processes $(X^b_t)_{t \in \ccintLigne{0,\tT}}$, $(X^c_t)_{t \in \ccintLigne{0,\tT}}$ satisfying $\rmd X^b_t= b_t((X^b_u)_{u \in \ccintLigne{0,\tT}}) \rmd t +  \sigma \rmd {\tilde{B}}_t$ and $\rmd X^c_t= c_t((X^c_u)_{u \in \ccintLigne{0,\tT}}) \rmd t + \sigma \rmd {\tilde{B}}_t$. Here, the two processes $(b_t((X^c_u)_{u \in \ccintLigne{0,\tT}}))_{t \in \ccintLigne{0,\tT}}$ and $(c_t((X^c_u)_{u \in \ccintLigne{0,\tT}}))_{t \in \ccintLigne{0,\tT}}$ are supposed to be $(\mcf_t^{\tilde{B}})_{t \geq 0}$-adapted  and to satisfy
    \begin{equation}
      \label{eq:hyp_girsanov}
      \PP(\int_{0}^{\tT} [\Vert  b_t((X^b_u)_{u \in \ccintLigne{0,\tT}})\Vert^2 + \Vert  c_t((X^c_u)_{u \in \ccintLigne{0,\tT}})\Vert^2] \rmd t < \infty) = 1 \eqsp.
    \end{equation}
    Then it holds \begin{equation}\label{general_KL_decomposition}
      \KL(P^c_{[0,\tT]}|P^b_{[0,\tT]})=\KL(P^c_0|P^b_0)+ \frac{1}{2\sigma^2}\mathbb{E}\left[\int_{0}^{\tT} \norm{b-c}^2(X_{[0,t]}^c) \, \rmd t\right]\eqsp,
  \end{equation} where $P^c_{[0,\tT]}$ and $P^b_{[0,\tT]}$ are the distribution of $(X^c_u)_{u\in[0,\tT]}$ and $(X^b_u)_{u\in[0,\tT]}$ respectively. Indeed, as a direct consequence of \cite[Theorem 7.6, Theorem 7.7]{liptser2013statistics} plus a standard change of variable argument, we get that $P^b_{\ccintLigne{0,\tT}}$ and $P^c_{\ccintLigne{0,\tT}}$ are equivalent. More precisely, we get that $P^c_{\ccintLigne{0,\tT}}$ with respect to $P^b_{\ccintLigne{0,\tT}}$ satisfies $\PP$-almost surely
  \begin{multline}
    \label{eq:2}
    \frac{\rmd P^c_{\ccintLigne{0,\tT}}}{\rmd P^b_{\ccintLigne{0,\tT}}}((X_u^b)_{u \in \ccintLigne{0,\tT}}) = \exp\left\{ \frac{1}{2\sigma^2} \int_{0}^{\tT} \Big[\Vert  b_{t}((X_u^b)_{u \in \ccintLigne{0,\tT}})\Vert^2 - \Vert  c_{t}((X_u^b)_{u \in \ccintLigne{0,\tT}})\Vert^2\Big] \rmd t  \right.
    \\ \left. + \frac{1}{\sigma^2}\int_0^{\tT} \ps{c_{t}((X_u^b)_{u \in \ccintLigne{0,\tT}}) - b_{t}((X_u^b)_{u \in \ccintLigne{0,\tT}})}{\rmd X_t^b} \right\} \eqsp.
  \end{multline}
  By taking the logarithm and using the definition of KL divergence, we obtain \eqref{general_KL_decomposition}.
   
But then, by taking $\tT=T-\delta$ with $\delta$ such that $0<\delta<h$ and $(X^b_t)_{t\in[0,\tT]}\equiv(\app_t)_{t\in [0,T-\delta]}$ and $(X^c_t)_{t\in [0,\tT]}\equiv(\baX_t)_{t\in [0,T-\delta]}$ (the hypothesis of \cite[Theorem 7.6, Theorem 7.7]{liptser2013statistics}, in particular \eqref{eq:hyp_girsanov}, are satisfied because of \Cref{gYup}, \Cref{ass:hyp_on_mustar} and \Cref{ass:estimation-absolute}), we get
 \begin{equation}\label{KL_decomposition}
     \begin{split}
 &\KL(\baP_{[0,T-\delta]}|\sbp_{[0,T-\delta]})\\ &=\KL(\fom_T|\gamma^d)
  + \frac{1}{4}\int_{T-h}^{T-\delta} \mathbb{E} \left[ \norm{\tilde{s}_{\thetastar}(h, \baX_{T-h}) - 2\nabla \log \ptilde_{T-t}(\baX_t)}^2\right] \rmd t\\
 &+ \sum_{k=0}^{N-2} \frac{1}{4}\int_{kh}^{(k+1)h} \mathbb{E} \left[ \norm{\tilde{s}_{\thetastar}(T-kh, \baX_{kh}) - 2\nabla \log \ptilde_{T-t}(\baX_t)}^2\right] \rmd t
\eqsp,
\end{split}
\end{equation}where $\baP_{[0,T-\delta]}$ (resp. $\sbp_{[0,T-\delta]}$) is the law of $\baX_{[0,T-\delta]}$ (resp. $\app_{[0,T-\delta]}$) and $N=T/h$ is the number of iterations. With an application of the triangular inequality we can write
\begin{equation}\label{eq:three_errors_no_details}
\KL(\baP_{[0,T-\delta]}|\sbp_{[0,T-\delta]}) \lesssim E_{1}+E_{2}+E_{3}
\end{equation}
with 
\begin{equation}\label{three_errors}
    \begin{split}
    E_1&=\KL(\fom_T|\gamma^d) \eqsp, \quad
    E_2= h \sum_{k=0}^{N-1}\mathbb{E} \left[ \norm{\tilde{s}_{\thetastar}(T-hk, \baX_{hk}) - 2\nabla \log \ptilde_{T-hk}(\baX_{hk})}^2\right] \\
E_3&=\sum_{k=0}^{N-2} \int_{kh}^{(k+1)h} \mathbb{E} \left[ \norm{Y_t - Y_{kh} }^2\right] \rmd t +\int_{T-h}^{T-\delta} \mathbb{E} \left[ \norm{Y_t - Y_{T-h}}^2\right] \rmd t.
    \end{split}
\end{equation}
 
As a consequence of the logarithmic Sobolev inequality for $\gamma^d$ (see \cite[Theorem 5.2.1 and Proposition 5.5.1]{bakry2014analysis}), the first error term can be bounded as follows
\begin{equation}\label{first_term}
 E_{1} \lesssim \KL(\fom_T|\gamma^d) \lesssim \rme^{-2T} \KL(\mustar | \gamma^d)\eqsp.
\end{equation}
The second term can be bounded directly using \Cref{ass:estimation-absolute}. We get 
\begin{equation} \label{secondb}
\begin{split}
E_2 \lesssim  T \varepsilon^2\eqsp.
\end{split}
\end{equation}
We are then left with the task of bounding $E_3$. This is done in the following Lemma whose proof will be shortly given.
\begin{lemma} \label{lastmin_lemma}
For any $\delta >0$,    it holds $E_3 \lesssim  h\{g(T)-g(0)\} \le h \fisher(\mustar|\gamma^d)\eqsp.$
\end{lemma}
Plugging  \eqref{first_term}-\eqref{secondb} and the conclusion of  \Cref{lastmin_lemma} into \eqref{eq:three_errors_no_details} and exploiting the data processing inequality, (see \cite[Lemma 1.6] {nutz2021introduction}), we get
\begin{equation}
    \KL(\mustar P_\delta|\applaw_{T-\delta})\leq  \KL(\baP_{[0,T-\delta]}|\sbp_{[0,T-\delta]})\lesssim \rme^{-2T} \KL(\mustar | \gamma^d) + T \varepsilon^2 + h \fisher(\mustar|\gamma^d)\eqsp.
  \end{equation}

  Now, because of the continuity of $(\foX_t)_{t\in [0,T]}$ and $(\app_t)_{t\in [0,T]}$, as $\delta$ goes to zero, we have almost sure convergence, hence (using \cite[Proposition 1.5]{baldiD}) weak convergence, of $\foX_\delta$ and $\app_{T-\delta}$ to $\foX_0$ and $\app_T$ respectively. Equation \eqref{i} then follows directly from the joint lower-semicontinuity of relative entropy \cite[Theorem 19]{van2014renyi}. In order to show that the bound \eqref{i} also holds if we replace the term $\KL(\mustar  P_T|\gamma^d)$ with $\rme^{-T} (\Mtt_2^2+d)$, it suffices to recall the well known bound 
$    \KL(\mustar P_T|\gamma^d)\lesssim (\Mtt_2^2+d)\rme^{-T}$, see \cite[Lemma C.4]{chen2022improved} for a proof.

\begin{proof}[Proof of \Cref{lastmin_lemma}]
    Consider the sequence of probability distributions $(\mustar^n)_{n\geq 1}$ with density functions $(\fom_0^n)_{n\geq 1}$ defined by 
    \begin{equation}\label{sequence}
       \fom_0^n (x) =  \rme^{-\norm{x}^2/n}\fom_0(x)/ \mfrZ_n \eqsp, \mfrZ_n = \int \rme^{-\norm{x'}^2/n}\fom_0(x') \rmd x'\eqsp.
    \end{equation}
    Then, for any $n \in \mathbb{N}$, $\;\mustar^n$ has finite eighth-order moment. 
      
Let $\delta >0$.   Now, for any $n \geq 1$ and $t \in \ccint{0,T-\delta}$, define $Y^n_t:=2\nabla\log \ptilde^n_{T-t}(\baX^n_t)$, $\;Z^n_t:=2\nabla^2\log \ptilde^n_{T-t}(\baX^n_t)$ and $\gY^n:=\mathbb{E}[\norm{Y^n_t}^2]$, where $\ptilde^n:=\fom^n/\gamma^d$ and $(\baX^n_t)_{t \in \ccint{0,T-\delta}}$ is the time-reversal of the Ornstein-Uhlenbeck process defined in \eqref{FE} with initial distribution $\mustar^n$.  Then it holds
\begin{lemma}\label{technical_lemma}
    For $\Leb$-almost every $x \in\mathbb{R}^d$ it holds
    \begin{equation} \label{lim_RHS}
        \fom_0^n(x) \to \fom_0(x)\eqsp,\quad \nabla \log\fom_0^n(x) \to \nabla\log\fom_0(x)\quad \text{and}\quad \fisher(\mustar^n|\gamma^d)\to \fisher(\mustar|\gamma^d)\eqsp.
      \end{equation}
      Moreover, it holds
      
        \begin{equation}
  \label{eq:conv_Y_n_Y}
(Y^n_t)_{t \in \ccint{0,T-\delta}} \text{ converges in distribution to } (Y_t)_{t \in \ccint{0,T-\delta}} \eqsp, \text{as $n\to \plusinfty$} \eqsp.
\end{equation}
 
\end{lemma}
We postpone to \Cref{app_technical_lemma} the proof of it.

Supposing this to be true, if we define
\begin{equation}
    E_3^n:=  \int_{T-h}^{T-\delta} \mathbb{E} \left[ \norm{Y^n_t-Y^n_{T-h}}^2\right] \rmd t + \sum_{k=0}^{T/h-2} \int_{kh}^{(k+1)h} \mathbb{E} \left[ \norm{Y^n_t-Y^n_{kh}}^2\right] \rmd t\eqsp,
    \end{equation}we obtain, as a consequence of \cite[Exercise 3.2.4]{durrett2019probability} that 
    \begin{equation}\label{lim_LHS}
        E_3\le \liminf_{n\to +\infty} E_3^n\eqsp.
    \end{equation}
    On the other side, using \Cref{PMP} and Young inequality, for $t\in [kh, T-\delta]$ and $k\in \{0,..., N-1\}$, it holds
    \begin{equation}
    \mathbb{E}\left[\norm{Y^n_t-Y^n_{kh}} ^2\right] \lesssim \mathbb{E}\left[\norm{ \int_{kh}^{t} Y^n_s \rmd s}^2\right] + \mathbb{E}\left[\norm{\int_{kh}^{t} Z^n_s \rmd B_s}^2\right]\eqsp.\label{eq:4}
  \end{equation}
Applying firstly Jensen inequality to the first term and It\^o isometry to the second one and eventually invoking \Cref{PMP}, we obtain for $t\in [kh, (k+1)h]$ and $k\in \{0, ..., N-2\}$, as well as for $t\in [kh, T-\delta]$ and $k=N-1$,
\begin{align}
\mathbb{E}\left[\norm{Y^n_t-Y^n_{kh}}^2\right] & \lesssim |t-kh| \;\mathbb{E}\left[\int_{kh}^{t}\norm{Y^n_s}^2 \rmd s\right]+ \mathbb{E}\left[\int_{kh}^{t}\norm{Z^n_s}_{\Fr}^2 \rmd s\right] \\
&\lesssim h \; \mathbb{E}\left[\int_{kh}^{t}\norm{Y^n_s}^2 \rmd s\right]+ \mathbb{E}\left[\int_{kh}^{t}\norm{Z^n_s}_{\Fr}^2 \rmd s\right] \\
\label{eq:need_ineq_later_dec_step}
& \lesssim  \mathbb{E}\left[\int_{kh}^{(k+1)h}\norm{Y^n_s}^2 + \norm{Z^n_s}_{\Fr}^2 \rmd s\right] \stackrel{\eqref{up2}}{\lesssim} g^n((k+1)h)-g^n(kh) \eqsp .
\end{align}
This bound, \eqref{lim_LHS} and \eqref{lim_RHS}
imply the thesis:
\begin{equation}
\begin{split}
E_3& \lesssim\liminf_{n\to + \infty} h\sum_{k=0}^{N-1} (\gY^n((k+1)h)-\gY^n(kh))  \lesssim \liminf_{n\to + \infty} h g^n(T)\\
&= \liminf_{n\to + \infty} h \fisher(\mustar^n|\gamma^d)=  h\fisher(\mustar|\gamma^d) \eqsp.
\end{split}
\end{equation}
\end{proof}

\end{proof}
\begin{proof}[Proof of Theorem \ref{theo_relative_OU_just_fisher}:] The proof of this Theorem is almost identical to the one of Theorem \ref{theo_absolute_OU_just_fisher}. 
The only difference is how the error term $E_2$ in \eqref{three_errors} is dealt with. In this case we have
\begin{equation}
\begin{split}
E_2&=h\sum_{k=0}^{N-1}  \mathbb{E} \left[\norm{\tilde{s}_{\thetastar}(T-kh,\baX_{kh})- 2\nabla \log \ptilde_{T-kh}(\baX_{kh})}^2\right] \\
&\stackrel{\Cref{ass:estimation-relative}}{\leq} \varepsilon^2h\sum_{k=0}^{N-1}\mathbb{E} \left[\norm{\nabla \log \ptilde_{T-kh}(\baX_{kh})}^2\right] \lesssim 
\varepsilon^2h\sum_{k=0}^{N-1}\gY(kh)\\
&\stackrel{\text{Prop.} \ref{gYup}}{\lesssim} \varepsilon^2h\fisher(\mustar|\gamma^d) \sum_{k=0}^{N-1} \rme^{-2(T-kh)} = \varepsilon^2h\fisher(\mustar|\gamma^d)  \rme^{-2T}\frac{\rme^{2T}-1}{\rme^{2h}-1}\lesssim \varepsilon^2\fisher(\mustar|\gamma^d)\eqsp.
\end{split}
\end{equation}
\end{proof}

\subsubsection{Proof of \Cref{theo_exponential_then_linear}}
\label{sec:proof-theor-refth_2}
The proof of \Cref{theo_exponential_then_linear} is postponed to \Cref{appendix_exponential_case}.

\subsection{kOU case}
\label{sec:kou-case}
As in the OU  setting, we first justify smoothness of the map 
\begin{equation}\label{mapmG}
    (0, T]\times \rset^{2d} \ni (t, x, v) \mapsto \fom_t (x,v) \in \rset_+\eqsp, 
\end{equation}
\begin{proposition}\label{mapmG_smooth}
    The map defined in \eqref{mapmG} is smooth, \ie , $\mrC^{1,2}((0,T]\times\rset^{2d})$.
\end{proposition}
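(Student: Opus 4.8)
The plan is to exploit that the kinetic Ornstein--Uhlenbeck dynamics \eqref{FEG} is a \emph{linear} stochastic differential equation, so that its transition kernel is explicitly Gaussian, and then to obtain the smoothness of $\fom_t$ by differentiating this kernel under the integral sign, exactly as in the non-degenerate case treated in \Cref{mapm_smooth}. Writing $W_t=(\foX_t,\foV_t)\in\rset^{2d}$, the system \eqref{FEG} reads $\rmd W_t = A W_t\,\rmd t + \bfSigma\,\rmd B_t$ with
\[
A=\begin{pmatrix} 0 & \IdM \\ -\IdM & -2\IdM\end{pmatrix}\eqsp,\qquad
\bfSigma=\begin{pmatrix} 0 \\ 2\IdM\end{pmatrix}\eqsp,
\]
whence $W_t=\rme^{tA}W_0+\int_0^t \rme^{(t-s)A}\bfSigma\,\rmd B_s$. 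Thus the conditional law of $W_t$ given $W_0=w_0$ is the Gaussian $\gauss(\rme^{tA}w_0,C_t)$ with $C_t=\int_0^t \rme^{sA}\bfSigma\bfSigma^{\transpose}\rme^{sA^{\transpose}}\,\rmd s$, and
\[
\fom_t(w)=\int_{\rset^{2d}} q_t(w_0,w)\,(\mustar\otimes\gamma^d)(\rmd w_0)\eqsp,\qquad
q_t(w_0,w)=\frac{\exp\!\big(-\tfrac12\,(w-\rme^{tA}w_0)^{\transpose}C_t^{-1}(w-\rme^{tA}w_0)\big)}{(2\pi)^{d}\,\sqrt{\det C_t}}\eqsp.
\]

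First I would check that $C_t$ is positive definite for every $t>0$. If $u^{\transpose}C_t u=0$ for some $u\in\rset^{2d}$, then $\bfSigma^{\transpose}\rme^{sA^{\transpose}}u=0$ for all $s\in[0,t]$ (the integrand defining $u^{\transpose} C_t u$ being continuous and non-negative), and expanding in powers of $s$ forces $\bfSigma^{\transpose}(A^{\transpose})^{j}u=0$ for all $j\ge 0$; hence $u$ is orthogonal to the range of the Kalman matrix $[\,\bfSigma\mid A\bfSigma\mid\cdots\mid A^{2d-1}\bfSigma\,]$. Since the columns of $\bfSigma$ span $\{0\}\times\rset^d$ while those of $A\bfSigma=(2\IdM,-4\IdM)^{\transpose}$ provide the complementary directions, this Kalman matrix has rank $2d$, so $u=0$ and $C_t\succ 0$. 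As $t\mapsto C_t$ and $t\mapsto\rme^{tA}$ are smooth (indeed real-analytic) and $C_t$ is invertible on $(0,\infty)$, the map $(t,w_0,w)\mapsto q_t(w_0,w)$ is $\mrC^{\infty}$ on $(0,\infty)\times\rset^{2d}\times\rset^{2d}$ and strictly positive.

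Next I would differentiate under the integral sign. Fix a compact set $K\subset\ocint{0,T}\times\rset^{2d}$. On $K$ the time variable is bounded away from $0$, so $\norm{C_t^{-1}}$ and $\norm{\rme^{tA}}$ are bounded and $C_t^{-1}\succeq c_K\IdM$ for some $c_K>0$; completing the square shows $q_t(w_0,w)\le C_K\,\rme^{-\kappa_K\norm{w_0}^2}$ uniformly on $K$, and each space--time derivative of $q_t$ produces a polynomial in $(\rme^{tA}w_0,w)$ times $q_t$, so that for every multi-index there are $C_K,m_K,\kappa_K>0$ with
\[
\Big|\partial_t^{a}\partial_{w}^{\beta}q_t(w_0,w)\Big|\le C_K\,(1+\norm{w_0})^{m_K}\,\rme^{-\kappa_K\norm{w_0}^2}\eqsp,\qquad (t,w)\in K,\ w_0\in\rset^{2d}\eqsp.
\]
The right-hand side is integrable against the probability measure $\mustar\otimes\gamma^d$ and independent of $(t,w)\in K$; hence by dominated convergence $\fom_t(w)=\int q_t(w_0,w)\,(\mustar\otimes\gamma^d)(\rmd w_0)$ is $\mrC^{\infty}$ — in particular $\mrC^{1,2}$ — on $\ocint{0,T}\times\rset^{2d}$, and it is everywhere positive there since $q_t>0$.

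The main obstacle is the positivity $C_t\succ 0$ for all $t>0$: this is precisely the controllability (equivalently, Hörmander bracket) property that compensates for the fact that the Brownian motion acts only on the velocity coordinate; once it is in hand, the rest is the routine bookkeeping of differentiating a Gaussian density under an integral, essentially verbatim as in the non-degenerate OU case of \Cref{mapm_smooth}. Alternatively, one could invoke Hörmander's hypoellipticity theorem for the Fokker--Planck operator $\partial_t-\scrL^{*}$ associated with \eqref{FEG}, but the explicit Gaussian route is self-contained and parallels the treatment of \Cref{mapm_smooth}.
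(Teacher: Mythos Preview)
Your argument is correct. Both you and the paper start from the explicit Gaussian transition kernel of the linear SDE \eqref{FEG}, but from there the routes diverge. The paper does not differentiate under the integral sign; instead it follows the template of \Cref{mapm_smooth}: it uses the kernel only to check that $\fom_{\delta}$ is continuous for each fixed $\delta>0$, and then invokes regularity results for Fokker--Planck equations (\cite[Theorems 6.6.1 and 9.4.3]{bogachev2022fokker}) to upgrade this continuous weak solution to a $\mrC^{1,2}$ classical solution. Your approach is more elementary and self-contained: you verify the Kalman rank condition to ensure $C_t\succ 0$ (a point the paper leaves implicit), and then obtain $\mrC^{\infty}$ regularity directly by dominated convergence, bypassing the Fokker--Planck machinery entirely. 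This buys you independence from external regularity theory---which is especially welcome here since the kOU operator is degenerate elliptic and one would otherwise need to check that the cited Bogachev results cover the hypoelliptic case. The paper's route is shorter to write down but leans on heavier references; yours does the work by hand.
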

The proof of the above proposition is given in \Cref{sec:proof-crefm-1}.
As a consequence of \Cref{mapmG_smooth}, a simple calculation shows that that $(\ptilde_t)_{t\in[0,T]}$ is a  classical solution of the Kolmogorov equation
\begin{equation}\label{eq:kinetic_kolmogorov}
    \partial_t \ptilde_t(x,v)-2\Delta_v\ptilde_t(x,v) -\ps{v}{\nabla_x \ptilde_t(x,v)} + \ps{(x-2v)}{\nabla_v \ptilde_t(x,v)} =0 \eqsp.
\end{equation}
In the current setting, the study of the relative score process alone does not contain enough information to provide satisfactory upper bounds for $\KL(\mustar|\gamma^d)$. Following the pathwise interpretation given in \cite{chiarini2021entropic} of ideas and concepts put forward in \cite{cedric2009hypocoercivity}, it pays off to consider the pair $(Y^v_t,Y^x_t)_{t \in[0,T])}$ defined by
  \begin{equation}
    \label{eq:def_Y_tG}
Y^v_t:=4\nabla_v \log \ptilde_{T-t}(\baX_t, \baV_t)\eqsp,\quad  Y^x_t:=4\nabla_x \log \ptilde_{T-t}(\baX_t, \baV_t)\eqsp,
\end{equation}
where  $(\baX_t, \baV_t)_{t\in[0,T]}$ is the process defined by \eqref{BEG}.
Indeed, we will show at \Cref{gup} below that the function $\g :[0,T) \to \rset_+$  defined by
\begin{equation}
  \label{eq:def_g_kinetic}
  g(t):= \mathbb{E}\left[\norm{Y^v_t}^2\right]+ \mathbb{E}\left[\norm{Y^v_t- Y^x_t}^2\right] \eqsp.
\end{equation}
plays the same role of the the function $\gY$ studied in the previous section. 
\begin{proposition}\label{PMPG}
   Let $\delta$ be an arbitrarily fixed positive constant. Then for $t\in [0,T-\delta]$ it holds 
    \begin{equation}\label{HJBG}
    \rmd Y^x_t=Y^v_t \rmd t+2Z^{vx}_t \cdot\rmd  B_t \eqsp, \quad \rmd Y^v_t=(2Y^v_t- Y^x_t) \rmd t+ 2 Z^{vv}_t \cdot \rmd B_t \eqsp,
\end{equation}
where for all $t\in[0,T)$
\begin{equation}
    Z^{vx}_t := 4\nabla_v\nabla_x\log\ptilde_{T-t}(\baX_t, \baV_t), \quad Z^{vv}_t:=4 \nabla_v ^2 \log \ptilde_{T-t}(\baX_t, \baV_t)\eqsp.
\end{equation}
If moreover $\mustar$ has finite eighth-order moment then for any $0\le s\le t\le T-\delta$ it holds
\begin{equation}\label{up2G}
   \g(t)- \g(s)\ge \int_{s}^{t}\left( \g(r)+ \mathbb{E}\left[\norm{Z^{vv}_r}_{\Fr}^2\right]\right) \rmd r \eqsp.
\end{equation}
\end{proposition}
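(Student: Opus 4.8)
The plan is to transpose, to the hypocoercive setting, the argument behind \Cref{PMP}: I would replace the single score $\nabla\log\ptilde$ by the pair $(\nabla_v\log\ptilde,\nabla_x\log\ptilde)$ and the Fisher functional $\gY$ by the twisted functional $\g$ of \eqref{eq:def_g_kinetic}, the pathwise incarnation of Villani's modified entropy, in the spirit of \cite{cedric2009hypocoercivity,chiarini2021entropic}. Throughout, \Cref{ass:hyp_on_mustar} is in force; since the relative density at time $0$ of the forward process, $\ptilde_0(x,v)=(\rmd\mustar/\rmd\gamma^d)(x)$, depends on $x$ only, it ensures $\fisher(\mustar\otimes\gamma^d|\gamma^{2d})=\fisher(\mustar|\gamma^d)<\infty$ and hence that $\g$ is finite on $[0,T)$, which is what makes the scheme run (just as finiteness of $\gY$ does in the overdamped proof).

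\textbf{Step 1 (the adjoint system).} Set $\Phi_t(x,v):=\log\ptilde_{T-t}(x,v)$; by \Cref{mapmG_smooth} it lies in $\mrC^{1,2}(\ccint{0,T-\delta}\times\rset^{2d})$ for every $\delta>0$, the kinetic analogue of \Cref{ptilde_smooth}. Dividing the Kolmogorov equation \eqref{eq:kinetic_kolmogorov} by $\ptilde_{T-t}$ and using $\Delta_v\log\ptilde_{T-t}=\Delta_v\ptilde_{T-t}/\ptilde_{T-t}-\norm{\nabla_v\log\ptilde_{T-t}}^2$, exactly as in \eqref{eq:3}, I would obtain that $\Phi$ solves a Hamilton--Jacobi--Bellman equation of the same type as \eqref{almost}, built from the velocity Laplacian $\Delta_v\Phi_t$, the velocity Hamiltonian $\norm{\nabla_v\Phi_t}^2$, and the kinetic transport terms $\ps{v}{\nabla_x\Phi_t}$ and $\ps{x-2v}{\nabla_v\Phi_t}$. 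Then I would apply It\^o's formula to $Y^v_t=4\nabla_v\Phi_t(\baX_t,\baV_t)$ and $Y^x_t=4\nabla_x\Phi_t(\baX_t,\baV_t)$ along the backward process \eqref{BEG}, and substitute the first-order PDEs obtained by differentiating the HJB equation in $v_i$, resp.\ $x_i$. As in the display chain closing the proof of \Cref{PMP}, the transport terms, the Hessian-times-drift terms generated by $-\baV_t\cdot\nabla_x$ and by the velocity drift $\baX_t-2\baV_t+Y^v_t$ of \eqref{BEG}, and the term produced by differentiating the Hamiltonian $\norm{\nabla_v\Phi_t}^2$, all cancel pairwise, leaving precisely the linear system \eqref{HJBG} with diffusion coefficients $Z^{vx}_t,Z^{vv}_t$.

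\textbf{Step 2 (the differential inequality).} From \eqref{HJBG} I would read off the decoupling $\rmd(Y^v_t-Y^x_t)=(Y^v_t-Y^x_t)\,\rmd t+2(Z^{vv}_t-Z^{vx}_t)\cdot\rmd B_t$, apply It\^o to $\norm{Y^v_t}^2$ and to $\norm{Y^v_t-Y^x_t}^2$, and add the two equations. Matching the quadratic part of the resulting drift against $\g(t)=\norm{Y^v_t}^2+\norm{Y^v_t-Y^x_t}^2$ leaves exactly the perfect square $\norm{2Y^v_t-Y^x_t}^2$, so the drift of $\g$ equals
\[
\g(t)+\norm{2Y^v_t-Y^x_t}^2+4\norm{Z^{vv}_t}_{\Fr}^2+4\norm{Z^{vv}_t-Z^{vx}_t}_{\Fr}^2\eqsp.
\]
Discarding the nonnegative terms $\norm{2Y^v_t-Y^x_t}^2$ and $4\norm{Z^{vv}_t-Z^{vx}_t}_{\Fr}^2$ and bounding $\norm{Z^{vv}_t}_{\Fr}^2\le4\norm{Z^{vv}_t}_{\Fr}^2$, then integrating between $s$ and $t$ and taking expectations, yields \eqref{up2G} --- provided the process $\bigl(\int_0^t[4(Y^v_r)^{\transpose}Z^{vv}_r+4(Y^v_r-Y^x_r)^{\transpose}(Z^{vv}_r-Z^{vx}_r)]\,\rmd B_r\bigr)_{t\in\ccintLigne{0,T-\delta}}$ is a true martingale, which is the only point requiring the finite eighth moment of $\mustar$.

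\textbf{Main obstacle.} The delicate point is thus the true-martingale property, i.e.\ the kinetic analogue of \Cref{null_E}: I must establish
\[
\PE\left[\int_0^{T-\delta}\left(\norm{(Y^v_r)^{\transpose}Z^{vv}_r}^2+\norm{(Y^v_r-Y^x_r)^{\transpose}(Z^{vv}_r-Z^{vx}_r)}^2\right)\rmd r\right]<\infty\eqsp.
\]
In contrast with the overdamped case, which rests on the isotropic Gaussian kernel \eqref{trans_dens_OU_0}, here $\fom_t$ is the image of $\mustar\otimes\gamma^d$ under the kinetic OU semigroup, whose transition kernel is Gaussian on $\rset^{2d}$ with mean $\rme^{tA}(x,v)^{\transpose}$ and covariance $\Sigma_t=\int_0^t\rme^{sA}BB^{\transpose}\rme^{sA^{\transpose}}\,\rmd s$, where $A$ and $B$ denote the drift and diffusion matrices of the forward system \eqref{FEG}. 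Although the diffusion is degenerate, Kalman's rank condition for the pair $(A,B)$ (already $[\,B\ \ AB\,]$ has rank $2d$) gives $\Sigma_t\succ0$ for every $t>0$; since $\Sigma_t$ is nondecreasing in the positive semidefinite order, $\sup_{t\in\ccintLigne{\delta,T}}\norm{\Sigma_t^{-1}}<\infty$, and therefore, for $t\ge\delta$, $\nabla_x\log\ptilde_t$, $\nabla_v\log\ptilde_t$ and their first-order derivatives grow at most polynomially in $(x,v)$, with constants depending on $\delta$. Combined with the fact that the law $(\mustar\otimes\gamma^d)P_T$, hence the backward process $(\baX_r,\baV_r)_{r\in\ccintLigne{0,T-\delta}}$, has finite eighth moment whenever $\mustar$ does, this delivers the $\rmL^2$ bound above. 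The cut-off $\delta$ is precisely what keeps us inside the smooth regime of \Cref{mapmG_smooth}; as in the overdamped proof, $\delta$ and the eighth-moment hypothesis are lifted only later, in the proof of \Cref{theo_absolute_kOU}, by the truncation $\mustar^n\propto\rme^{-\norm{\cdot}^2/n}\mustar$ together with lower semicontinuity.
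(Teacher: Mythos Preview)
Your proof is correct and follows essentially the same route as the paper's: derive the HJB equation for $\Phi_t=\log\ptilde_{T-t}$ from \eqref{eq:kinetic_kolmogorov}, apply It\^o's formula along \eqref{BEG} to obtain the adjoint system \eqref{HJBG}, compute the It\^o differential of $\norm{Y^v_t}^2+\norm{Y^v_t-Y^x_t}^2$, and invoke the kinetic analogue of \Cref{null_E} (the paper's \Cref{kinetic_eight_moment}) to justify taking expectations. Your perfect-square identity for the quadratic drift is a slightly cleaner rephrasing of the paper's Young-inequality step (the two are algebraically equivalent), and the paper handles the martingale part via the conditional-expectation representations of the score along the forward process rather than the pointwise polynomial-growth claim you sketch, but the approach is otherwise identical.
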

\begin{remark}\label{ptilde-smoothG} For any fixed $\delta>0$, $(Z^{vv}_t)_{t \in \ccint{0,T-\delta}}$ and  $(Z^{vx}_t)_{t \in \ccint{0,T-\delta}}$ (as well as $(Y^v_t)_{t\in [0, T-\delta]}$ and $(Y^x_t)_{t\in [0, T-\delta]}$) is well-defined because \Cref{mapmG_smooth}
implies that, for any $\delta>0$,  $\ptilde_{T-t}(x, v)\in \mrC^{1,2}([0, T-\delta]\times \rset^{2d})$.
\end{remark}
\begin{proof}[Proof of \Cref{PMPG}:] 
Fix arbitrarily $\delta>0$. 
Dividing by $\ptilde_{T-t}$ in \eqref{eq:kinetic_kolmogorov} and using the fact that $$\Delta_v \log \ptilde_{T-t} = \frac{\Delta_v \ptilde_{T-t}}{\ptilde_{T-t}} - \norm{\nabla_v \log \ptilde_{T-t}}^2,$$ 
we get that $(\Phi_t)_{t\in[0,T-\delta]}:= ( \log \ptilde_{T-t})_{t\in[0,T-\delta]}$ is a classical solution of the Hamilton-Jacobi-Bellman equation: for $(x,v) \in \rset^{2d}$ and $t \in \ccint{0,T-\delta}$,
\begin{equation}\label{almostG}
    \partial_t \Phi_t(x,v)-  v\cdot \nabla_x\Phi_t(x,v) + (x-2v)\cdot\nabla_v\Phi_t(x,v) +2 \Delta_v\Phi_t(x,v) +2 \norm{\nabla_v\Phi_t}^2(x,v)=0 \eqsp.
\end{equation}
Since $Y^v_t=4\nabla_v \Phi_t (\baX_t, \baV_t)$ and $Y^x_t=4\nabla_x \Phi_t (\baX_t, \baV_t)\eqsp,$ \eqref{HJBG} is now a consequence of \Cref{mapmG_smooth}, It\^o formula, \eqref{BEG} and \eqref{almostG}. 
\begin{commenta}Indeed, using the SDE solved by the backward process \eqref{BEG} we find, setting $\bbf^*(x,v)=(-v,x-2v)$ and $Y_t=(Y^x_t,Y^v_t)$, and using \eqref{almostG} 
\begin{equation}
    \begin{split}
      &\rmd Y_t\\
      &= 4 \{\partial_t \nabla\Phi_t + 2 \Delta_v \nabla \Phi_t+\nabla^2\Phi_t\cdot \bbf^* +4\nabla_v\nabla\Phi_t\cdot\nabla_v\Phi_t \}(\baX_t, \baV_t)\rmd t\\
      &+ 8\nabla_v\nabla\Phi_t (\baX_t, \baV_t)\cdot\rmd B_t\\
        &= 4 \{\nabla \Big(\partial_t  \Phi_t + 2 \Delta_v  \Phi_t+2\norm{\nabla_v \Phi_t}^2\Big) (\baX_t, \baV_t) +\nabla^2 \Phi_t\cdot \bbf^*(\baX_t,\baV_t)\}\rmd t +2 \begin{bmatrix} Z^{vx}_t\\Z^{vv}_t\end{bmatrix} \cdot\rmd B_t \\
       &\stackrel{\eqref{almostG}}{=}  4\{-\nabla(\nabla \Phi_t\cdot \bbf^*)+\nabla^2\Phi_t\cdot \bbf^*\}(\baX_t,\baV_t)\rmd t +2 \begin{bmatrix} Z^{vx}_t\\Z^{vv}_t\end{bmatrix} \cdot\rmd B_t \\
        &= -4\nabla\bbf^*\cdot\nabla\Phi_t(\baX_t,\baV_t)\rmd t +2 \begin{bmatrix} Z^{vx}_t\\Z^{vv}_t\end{bmatrix} \cdot\rmd B_t= \begin{bmatrix}
            0 & 1\\
            -1 & 2
        \end{bmatrix}\cdot Y_t\rmd t +2 \begin{bmatrix} Z^{vx}_t\\Z^{vv}_t\end{bmatrix} \cdot\rmd B_t \eqsp,
    \end{split}
\end{equation}
which proves \eqref{HJBG}.
\end{commenta}

Let us now proceed to show \eqref{up2G}.
To this aim, we combine It\^o formula with \eqref{HJBG} to obtain
\begin{equation}
        \rmd \norm{Y^v_t}^2= \{4\norm{Y^v_t}^2+ 4\norm{Z^{vv}_t}_{\Fr}^2- 2 \ps{Y^v_t}{ Y^x_t}\}\rmd t + 4 (Y^v_t)^{\transpose} Z^{vv}_t \cdot \rmd B_t\eqsp, \quad t\in [0, T-\delta]\eqsp,
\end{equation}and 
\begin{equation}
\begin{split}
    \rmd \norm{Y^v_t-Y^x_t}^2&= \left\{2 \norm{Y^v_t-Y^x_t}^2 + 4\norm{Z^{vv}_t - Z^{vx}_t}_{\Fr}^2\right\}\rmd t \\
    & \quad +4(Y^v_t-Y^x_t)^{\transpose} (Z^{vv}_t - Z^{vx}_t) \cdot \rmd B_t\eqsp, \quad t\in [0, T-\delta]\eqsp.
\end{split}
\end{equation}
Hence, setting for $t\in[0,T-\delta],$
$H_t:= 4 Y^v_t \cdot Z^{vv}_t +4(Y^v_t-Y^x_t)\cdot (Z^{vv}_t - Z^{vx}_t)\eqsp,$  we get that, for $t\in [0, T-\delta]$, 
\begin{equation}
    \begin{split}
      &\rmd \left(\norm{Y^v_t}^2 + \norm{Y^v_t-Y^x_t}^2\right)\\
      &\ge  \{4\norm{Y^v_t}^2- 2 Y^v_t\cdot Y^x_t+ 2 \norm{Y^v_t-Y^x_t}^2+4\norm{Z^{vv}_t}_{\Fr}^2\}\rmd t +  H_t\cdot \rmd B_t\eqsp\\
    & =\{2\norm{Y^v_t}^2 +2 Y^v_t\cdot(Y^v_t- Y^x_t)
    + 2 \norm{Y^v_t-Y^x_t}^2+4\norm{Z^{vv}_r}_{\Fr}^2\}\rmd t +  H_t \cdot \rmd B_t\\
    &\geq \{\norm{Y^v_t}^2 +  \norm{Y^v_t-Y^x_t}^2+4\norm{Z^{vv}_r}_{\Fr}^2\}\rmd t +  H_t \cdot \rmd B_t
    \end{split}
\end{equation}
If the process $(\int_{0}^{t}  H_s\rmd B_s)_{t\in [0,T-\delta]}$ is a true martingale, the the desired conclusion follows taking expectation on both sides in the above inequality. The next Lemma, whose proof is deferred to \Cref{sec:proof-crefgup}, takes care of this technical point.
\begin{lemma}\label{kinetic_eight_moment}
    Assume that $\mustar$ has finite eighth-order moment. Then it holds \\$\int_{0}^{T-\delta} \mathbb{E}[\norm{H_s}^2]\rmd s < \plusinfty$.  In particular, $(\int_{0}^{t} H_s \rmd B_s)_{t\in [0,T-\delta]}$ is a martingale.
\end{lemma}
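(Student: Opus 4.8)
The plan is to adapt, \emph{mutatis mutandis}, the argument used for \Cref{null_E} in the overdamped case. First I would unravel the definition of $H_t$ and use Young's inequality together with the elementary bound $\norm{\cdot}_{\mathrm{op}}\le\norm{\cdot}_{\Fr}$ to get a pointwise estimate of the form
\begin{equation}
\norm{H_t}^2\lesssim\big(\norm{Y^v_t}^2+\norm{Y^x_t}^2\big)\big(\norm{Z^{vv}_t}_{\Fr}^2+\norm{Z^{vx}_t}_{\Fr}^2\big)\eqsp,\quad t\in[0,T-\delta]\eqsp,
\end{equation}
and then Cauchy--Schwarz in expectation to reduce the claim to controlling $\PE[\norm{Y^v_t}^4]$, $\PE[\norm{Y^x_t}^4]$, $\PE[\norm{Z^{vv}_t}_{\Fr}^4]$ and $\PE[\norm{Z^{vx}_t}_{\Fr}^4]$, uniformly for $t\in[0,T-\delta]$. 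Since $(\baX_t,\baV_t)$ has the same law as $(\foX_{T-t},\foV_{T-t})$ (the forward process at time $T-t$), writing $\zeta_u:=(\foX_u,\foV_u)\in\rset^{2d}$, this amounts to bounding the fourth moments of $\nabla\log\ptilde_u(\zeta_u)$ and $\nabla^2\log\ptilde_u(\zeta_u)$ for $u:=T-t\in[\delta,T]$.

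Next I would exploit that \eqref{FEG} is a \emph{linear} SDE, so that conditionally on $\zeta_0=z_0$ the law of $\zeta_u$ is $\gauss(\rme^{Au}z_0,\Sigma_u)$, where $A$ is the drift matrix of \eqref{FEG} and $\Sigma_u$ is the associated covariance, which is positive definite for every $u>0$ by the Hörmander/Kalman condition for the kOU pair. Writing $\fom_u$ as the $\mustar\otimes\gamma^d$-average of the explicit Gaussian kernels $q_u(z_0,\cdot)$, and recognising $q_u(z_0,z)\,\mustar\otimes\gamma^d(\rmd z_0)/\fom_u(z)$ as the conditional law of $\zeta_0$ given $\zeta_u=z$, I would derive the standard Gaussian score/Hessian identities
\begin{align}
\nabla\log\fom_u(z)&=-\Sigma_u^{-1}\big(z-\rme^{Au}\PE[\zeta_0\mid\zeta_u=z]\big)\eqsp,\\
\nabla^2\log\fom_u(z)&=-\Sigma_u^{-1}+\Sigma_u^{-1}\rme^{Au}\,\mathrm{Cov}(\zeta_0\mid\zeta_u=z)\,\rme^{A^{\transpose}u}\Sigma_u^{-1}\eqsp,
\end{align}
the same formulas holding for $\ptilde_u=\fom_u/\gamma^{2d}$ up to the affine corrections coming from $-\log\gamma^{2d}$, which do not affect moment finiteness. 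Evaluating at $z=\zeta_u$, bounding $\mathrm{Cov}(\zeta_0\mid\zeta_u)\preceq\PE[\zeta_0\zeta_0^{\transpose}\mid\zeta_u]$ in the PSD order and $\norm{\cdot}_{\Fr}\le\trace(\cdot)$ for PSD matrices, and setting $c_u:=\max\{\norm{\Sigma_u^{-1}}_{\mathrm{op}},\norm{\rme^{Au}}_{\mathrm{op}},1\}$ — a quantity continuous, hence bounded, on the compact interval $[\delta,T]$ — I would obtain
\begin{equation}
\norm{\nabla\log\ptilde_u(\zeta_u)}\lesssim c_u^2\big(\norm{\zeta_u}+\PE[\norm{\zeta_0}\mid\zeta_u]\big)\eqsp,\qquad \norm{\nabla^2\log\ptilde_u(\zeta_u)}_{\Fr}\lesssim c_u^4\big(1+\PE[\norm{\zeta_0}^2\mid\zeta_u]\big)\eqsp.
\end{equation}

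Raising these to the fourth power, applying Jensen's inequality to the conditional expectations, and using that $\zeta_0\sim\mustar\otimes\gamma^d$ has finite eighth moment under the hypothesis on $\mustar$ while $\zeta_u=\rme^{Au}\zeta_0+G_u$ (with $G_u$ centred Gaussian) inherits a finite eighth moment, one gets that $\PE[\norm{\nabla\log\ptilde_u(\zeta_u)}^4]\lesssim c_u^{8}(\PE[\norm{\zeta_u}^4]+\PE[\norm{\zeta_0}^4])$ is finite already from a fourth-moment assumption, whereas $\PE[\norm{\nabla^2\log\ptilde_u(\zeta_u)}_{\Fr}^4]\lesssim c_u^{16}(1+\PE[\norm{\zeta_0}^8])$ is precisely where the eighth-moment hypothesis is needed; both quantities are finite and bounded uniformly in $u\in[\delta,T]$. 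Feeding these bounds back into the Cauchy--Schwarz estimate and integrating over $t\in[0,T-\delta]$ yields $\PE\big[\int_0^{T-\delta}\norm{H_s}^2\rmd s\big]<\plusinfty$, so that $\int_0^{T-\delta}\norm{H_s}^2\rmd s<\plusinfty$ almost surely and $(\int_0^tH_s\rmd B_s)_{t\in[0,T-\delta]}$ is an $\mathrm{L}^2$-bounded, hence genuine, martingale.

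The main obstacle I anticipate is the bookkeeping for the hypoelliptic covariance: one must verify that $\Sigma_u\succ0$ for every $u>0$ and, more delicately, that $\norm{\Sigma_u^{-1}}_{\mathrm{op}}$ — which degenerates like $u^{-3}$ as $u\downarrow0$ for the kinetic flow — remains bounded on $[\delta,T]$. This is exactly what the early-stopping parameter $\delta>0$ provides, and it is the reason the statement is confined to $[0,T-\delta]$; the remaining steps (the Gaussian identities and the moment propagation for a linear flow) are routine and mirror the OU computation behind \Cref{null_E}.
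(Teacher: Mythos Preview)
Your proposal is correct and follows essentially the same approach as the paper: the paper's own proof simply says to ``proceed as in the proof of \Cref{null_E}'' using the explicit Gaussian transition kernel \eqref{trans_dens_kOU} and the linear-flow representation $\overrightarrow{U}_s=\rme^{-As}\overrightarrow{U}_0+\sqrt{\Sigma_s}\,Z$, then invokes H\"older, Fubini and the square-integrability criterion for stochastic integrals. You have spelled out those steps in greater detail---in particular the conditional-mean/covariance formulas for the score and Hessian and the role of $\delta>0$ in keeping $\norm{\Sigma_u^{-1}}$ bounded---but the strategy is identical.
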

\end{proof}
\subsubsection{Proof of Theorem \ref{theo_absolute_kOU} and \ref{theo_relative_kOU}} 
As in the OU case, under \Cref{ass:hyp_on_mustar}, it makes sense to consider the natural extensions of \eqref{eq:def_Y_tG} and \eqref{eq:def_g}, respectively $(Y^v_t)_{t\in [0,T]},$\\$ (Y^x_t)_{t\in [0,T]}$ and $\g : [0,T] \rightarrow \rset_+\eqsp,$ to the all time interval $[0,T]$. Also, as in the OU case, we can prove a monotonicity property for $g$. Note that \Cref{gup} would be a consequence of \Cref{PMPG} if we were allowed to take $\delta=0$ in this result. It is therefore a technical result whose proof we defer to \Cref{sec:proof-crefgup}.
\begin{proposition}\label{gup}
  Assume \Cref{ass:hyp_on_mustar} and let $g$ be as in \eqref{eq:def_g}. Then for any $0\le s \le t\le T$ it holds
  \begin{equation}\label{up1G}
    g (s) \lesssim \rme^{-(t-s)/2} g(t)  \eqsp.
  \end{equation}
\end{proposition}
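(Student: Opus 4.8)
The plan is to follow the scheme of the overdamped case (\Cref{gYup}): first turn \Cref{PMPG} into a Gr\"onwall-type estimate for $g$, then remove the auxiliary eighth-moment assumption by truncation, and finally pass to the endpoint $t=T$. Assume first that $\mustar$ has a finite eighth-order moment, so that \eqref{up2G} holds. Since the integrand on its right-hand side is bounded below by $g(r)$, we get $g(t)-g(s)\ge\int_s^t g(r)\,\rmd r$ for all $0\le s\le t\le T-\delta$; comparing $g$ with the solution of $u'=u$ through $(s,g(s))$ (a routine Gr\"onwall argument that only uses this integral inequality) gives $g(s)\le\rme^{-(t-s)}g(t)$ for $0\le s\le t\le T-\delta$. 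As $\delta>0$ is arbitrary, this already yields $g(s)\le\rme^{-(t-s)}g(t)$ — in particular the claimed bound, with the weaker rate $\rme^{-(t-s)/2}$ — for all $0\le s\le t<T$.

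To dispose of the moment assumption, I would reuse the truncation from the proof of \Cref{lastmin_lemma}: replace $\mustar$ by $\mustar^n$ with density proportional to $\rme^{-\norm{x}^2/n}\fom_0$, which has finite moments of every order; writing $g^n$ for the functional \eqref{eq:def_g_kinetic} associated with $\mustar^n$, the previous step gives $g^n(s)\le\rme^{-(t-s)}g^n(t)$ for $0\le s\le t<T$. One then sends $n\to\infty$ using $g^n(r)\to g(r)$ for every $r\in[0,T)$. This pointwise convergence follows along the lines of \eqref{lim_RHS}--\eqref{lim_RHS_2} and the end of the proof of \Cref{lastmin_lemma}, namely from $\fom^n_t\to\fom_t$, $\nabla\log\fom^n_t\to\nabla\log\fom_t$ and $(\baX^n_t,\baV^n_t)\Rightarrow(\baX_t,\baV_t)$, the only changes being that the Ornstein--Uhlenbeck transition kernel is replaced by the kinetic one and \Cref{mapmG_smooth} is invoked in place of \Cref{mapm_smooth}. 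This establishes $g(s)\le\rme^{-(t-s)}g(t)$ for all $0\le s\le t<T$ under \Cref{ass:hyp_on_mustar} alone.

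It remains to reach $t=T$. By \eqref{up2G} (or its truncated version) $g$ is non-decreasing on $[0,T)$, so $\bar g:=\sup_{t<T}g(t)=\lim_{t\uparrow T}g(t)$ exists in $[0,\infty]$ and $g(s)\le\rme^{-(T-s)}\bar g$ for every $s<T$. Since $\baX_T\sim\mustar$, $\baV_T\sim\gamma^d$ and $\ptilde_0=\rmd(\mustar\otimes\gamma^d)/\rmd\gamma^{2d}=\rmd\mustar/\rmd\gamma^d$ depends on $x$ only, one has $Y^v_T=0$ and $g(T)=16\,\fisher(\mustar|\gamma^d)$, which is finite under \Cref{ass:hyp_on_mustar}. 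To control $\bar g$ by $g(T)$, express $(\baX_t,\baV_t)$ through the forward marginal $\fom_{T-t}$ and use $\norm{a}^2+\norm{a-b}^2\le 2(\norm{a}^2+\norm{b}^2)$ to obtain $g(t)\le 32\,\fisher(\fom_{T-t}|\gamma^{2d})$; since Fisher information is non-increasing along the kinetic Ornstein--Uhlenbeck semigroup and the velocity marginal of $\fom_0$ is the stationary Gaussian, $\fisher(\fom_{T-t}|\gamma^{2d})\le\fisher(\fom_0|\gamma^{2d})=\fisher(\mustar|\gamma^d)$, so $\bar g\le 32\,\fisher(\mustar|\gamma^d)=2\,g(T)$. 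Combining with the previous paragraph, $g(s)\le\rme^{-(t-s)}\bar g\lesssim\rme^{-(t-s)}g(t)\le\rme^{-(t-s)/2}g(t)$ for all $0\le s\le t\le T$, which is the claim.

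The main obstacle is precisely this endpoint analysis. For $t<T$ the relative density $\ptilde_{T-t}$ is $\mathrm{C}^{1,2}$ by \Cref{mapmG_smooth} and the dynamics of \Cref{PMPG} are available, whereas at $t=T$ the density $\ptilde_0$ is only of Sobolev regularity, so $g(T)$ cannot be reached by continuity of the paths alone; one must instead bound $\limsup_{t\uparrow T}g(t)$ by an a priori Fisher-information estimate, i.e.\ check that the (partial, velocity-weighted) Fisher information along the forward flow stays under control as the noising level vanishes. A secondary technical point is the proof of $g^n(r)\to g(r)$, which relies on transposing to the kinetic forward process the score-convergence estimates already carried out in \Cref{lastmin_lemma}.
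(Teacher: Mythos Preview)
Your approach is genuinely different from the paper's, and the endpoint step contains a circularity that breaks the argument.

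The paper does \emph{not} go through \Cref{PMPG} at all. It proves \Cref{gup} by the same direct semigroup computation as in \Cref{gYup}: using the explicit Gaussian transition kernel \eqref{trans_dens_kOU} one has the commutation $\nabla P_t f(u)=(\rme^{-\bfA t})^{\top}P_t\nabla f(u)$, and since $\bfA$ has a single eigenvalue $1$ with a Jordan block, $\norm{\rme^{-\bfA t}}\lesssim (1+t)\rme^{-t}\lesssim \rme^{-t/2}$. Combining this with Jensen's inequality for the jointly convex map $(x,\eta)\mapsto\norm{x}^2/\eta$ and the equivalence of the norms $\norm{(a,b)}^2$ and $\norm{a}^2+\norm{a-b}^2$ yields $g(s)\lesssim\rme^{-(t-s)/2}g(t)$ on all of $[0,T]$, with no truncation and no separate endpoint analysis.

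Your step~3 asserts that ``Fisher information is non-increasing along the kinetic Ornstein--Uhlenbeck semigroup'' in order to bound $\bar g=\sup_{t<T}g(t)$ by $g(T)$. But by the norm equivalence you yourself use, $g(t)\asymp\fisher(\fom_{T-t}\,|\,\gamma^{2d})$, so this monotonicity claim is (up to constants) exactly the statement of \Cref{gup}; invoking it to finish the proof is circular. Moreover, monotonicity of the \emph{full} Fisher information along a hypoelliptic semigroup is not a standard fact: the carr\'e du champ of kOU is degenerate ($\Gamma(f)=2\norm{\nabla_v f}^2$), the Bakry--\'Emery $\Gamma_2$ calculus does not directly control $\nabla_x$, and this is precisely why one introduces twisted functionals like $g$ in the first place.

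There is a secondary gap in step~2: weak convergence $(Y^{n,x}_t,Y^{n,v}_t)\Rightarrow(Y^x_t,Y^v_t)$ gives only $g(r)\le\liminf_n g^n(r)$ by Fatou, not $g^n(r)\to g(r)$; to pass the inequality $g^n(s)\le\rme^{-(t-s)}g^n(t)$ to the limit you would additionally need $\limsup_n g^n(t)\lesssim g(t)$, which is not supplied. The Gr\"onwall argument on $[0,T)$ is fine, but without an independent endpoint bound the route through \Cref{PMPG} does not close.
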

The proofs of both Theorem \ref{theo_absolute_kOU} and \ref{theo_relative_kOU} have a substantial overlap with those given for Theorem \ref{theo_absolute_OU_just_fisher} and \ref{theo_relative_OU_just_fisher}. Therefore, in order to avoid repetitions, we focus on the few relevant sections where we have to argue differently.

\begin{proof}[Proof of Theorem \ref{theo_absolute_kOU}:] Fix $\delta$ such that $0<\delta<h$.

As in Theorem \ref{theo_absolute_OU_just_fisher} that, if we denote by $\baP_{[0,T-\delta]}$ and $\sbp_{[0,T-\delta]}$ respectively the laws of $(\baX_t, \baV_t)_{t\in[0,T-\delta]}$ and $(\app_t, \appl_t)_{t\in[0,T-\delta]}$, then we have
\begin{equation}
   \KL(\baP_{[0, T-\delta]}|\sbp_{[0, T-\delta]})\lesssim E_1+E_2 +E_3
\end{equation}
where 
$
E_1 \lesssim \KL(\mustar P_{T}|\gamma^{2d})\eqsp,
$
$E_2\lesssim \varepsilon^2 T$ thanks to \Cref{ass:estimation-absolute-G}, and 
\begin{equation}
E_3\lesssim \sum_{k=0}^{N-2} \int_{kh}^{(k+1)h} \mathbb{E} \left[ \norm{Y^v_t-Y^v_{kh}}^2\right] \rmd t + \int_{T-h}^{T-\delta} \mathbb{E} \left[ \norm{Y^v_t-Y^v_{T-h}}^2\right] \rmd t \eqsp.
\end{equation}
To bound $E_1$, we use the logarithmic Sobolev inequality for $\gamma^{2d}$ and then \Cref{gup} to obtain 
\begin{equation}
E_1 \lesssim \fisher(\mustar P_T|\gamma^{2d}) =\mathbb{E}[\norm{Y^x_0}^2+\norm{Y^v_0}^2]\lesssim g(0)\leq \rme^{-T/2}g(T)=\rme^{-T/2}\fisher(\mustar|\gamma^d) \eqsp,
\end{equation}
where to obtain the last inequality we used that $Y^v_T\equiv 0$ by construction. Let us now bound $E_3$ under the extra assumption that $\mustar$ admits a moment of order eight. Arguing as in Lemma \ref{lastmin_lemma}, we obtain that for all $k$ and all $t\in[kh,(k+1)h]$
\begin{equation}\label{eq:E3bound_kOU}
\begin{split}
\bbE[\norm{Y^v_t-Y^v_{kh}}^2] &\stackrel{\eqref{HJBG}}{\lesssim} \int_{kh}^{(k+1)h} \mathbb{E}[ \norm{Y^x_t}^2+\norm{Y^v_t}^2+ 2\norm{Z^{vv}_t}^2_{\Fr}] \rmd t \\
&\stackrel{\eqref{up2G}}{\lesssim} g((k+1)h)-g(kh) \eqsp.
\end{split}
\end{equation}
The validity \eqref{eq:E3bound_kOU} in the case when $\mustar$ does not admit a moment of order eight is shown with an approximation procedure exactly as in Lemma \ref{lastmin_lemma}.
But then, following closely the proof \Cref{lastmin_lemma}, since $Y^v_T\equiv 0$ by construction, we find
\begin{equation}
E_{3} \lesssim g(T)h \lesssim \mathbb{E}[\norm{Y^x_T}^2]h\lesssim\fisher(\mustar|\gamma^d)h.
\end{equation}

The proof then concludes as the one of Theorem \ref{theo_absolute_OU_just_fisher}.
\end{proof}
\begin{proof}[Proof of Theorem \ref{theo_relative_kOU}:] As in the OU case, the proof of this Theorem is almost identical to the one of Theorem \ref{theo_absolute_kOU}. The unique passage in the proof of Theorem \ref{theo_absolute_kOU} to be reconsidered is the one concerning the bound on the term $E_2$. Here, it takes the form
\begin{equation}
E_2=h\sum_{k=0}^{N-1}  \mathbb{E} \left[\norm{\tilde{s}_\theta(T-kh,\baX_{kh}, \baV_{kh})- 4\nabla_v \log \ptilde_{T-kh}(\baX_{kh}, \baV_{kh})}^2\right] \eqsp.
\end{equation}
Thanks to \Cref{ass:estimation-relative-G}, we immediately find 
\begin{equation}
\begin{split}
E_2&\lesssim \varepsilon^2h\sum_{k=0}^{N-1} \mathbb{E} \left[\norm{Y_{kh}}^2\right] \lesssim  \varepsilon^2h\sum_{k=0}^{N-1} g(kh) \stackrel{\text{Prop.} \ref{gup}}{\lesssim }\varepsilon^2h\fisher(\mustar|\gamma^d)\sum_{k=0}^{N-1} \rme^{-(T-kh)}.
\end{split}
\end{equation}
At this point, the proof can be completed in the same way as the proof of Theorem \ref{theo_relative_OU_just_fisher}.
\end{proof}
\appendix
\section{Technical results}

\subsection{Proof of \Cref{mapm_smooth}}
\label{sec:proof-crefm11}
  Recall that given $t\in [0,T]$ and $x,y \in \rset^d$ the transition density $q_t(x,y)$ associated to the Ornstein-Uhlenbeck semigroup of \eqref{FE} is given by
\begin{equation}\label{trans_dens_OU}
q_t(x,y)=\frac{1}{(2\pi (1-\rme^{-2t}))^{d/2}}\exp\parenthese{-\frac{\norm{y-\mathrm{e}^{-t}x}^2}{2(1-\rme^{-2t})}}\eqsp.
\end{equation}
  But then, if we fix $\delta>0$, $\fom_\delta(y)=\int \fom_0 (x) q_\delta (x,y) \rmd x$ is a continuous function of $\rset^d$. Indeed, $y \mapsto \fom_0(x)  q_\delta (x,y)$ is almost everywhere continuous and upper bounded, up to a constant, by the integrable function $\fom_0$, being $\fom_0(x)  q_\delta (x,y) \le \fom_0(x)/(2\pi (1-\rme^{-2\delta}))^{d/2}$. 
Now, recall that
\cite[Theorem 6.6.1]{bogachev2022fokker} and \cite[Theorem 9.4.3]{bogachev2022fokker} imply that the Fokker-Planck equation 
    \begin{equation}\label{kolmo_fom}
        \partial_t p_t(x) -\div (x  p_t) -\Delta p_t(x) =0 \quad \text{ for } \quad (t,x) \in  [0,T]\times\rset^d\eqsp,
    \end{equation}
    with initial condition $p_0 \in \rmC(\rset^d)$, has a unique solution $p : [0,T] \times \rset^d$ such that $p_t\in\mrC^{1,2}((0,T]\times \rset^d)$.
    However, by \cite[Lemma 2.4]{figalli2008existence} and \cite[Theorem 8.1.1]{stroock1997multidimensional},
$(\fom_t)_{t \in [\delta,T]}$ is the (unique) weak solution to \eqref{kolmo_fom}  with initial condition $\fom_{\delta}$ which have shown to be continuous. The proof is therefore complete.

\subsection{Proof of \Cref{gYup}}
\label{sec:proof-crefgyup}
    Recall that the transition density $q_t(x,y)$ associated to the Ornstein-Uhlenbeck semigroup of \eqref{FE} is given by
\eqref{trans_dens_OU}. It then follows from the dominated convergence theorem that if $(P_t)_t$ is the semigroup associated to \eqref{FE}, then for any $f\in \rml^1(\gamma^d)$ and $t\in [0,T]$ it holds
\begin{equation}\label{inequality_P}
\begin{split}
\norm{\nabla P_t f(x)} &= \norm{\nabla\mathbb{E}[f(x\rme^{-t} + \sqrt{1-\rme^{-2t}}Z)]}= \rme^{-t}\norm{\mathbb{E}[\nabla f(x\rme^{-t} + \sqrt{1-\rme^{-2t}}Z)]}\\
&= \rme^{-t} \norm{P_t \nabla f}\le  \rme^{-t} P_t \norm{\nabla f}\eqsp.
\end{split}
\end{equation}Moreover, being $(P_t)_t$ self-adjoint, for any $\mu, \nu \in \mathcal{P}(\rset^d)$ with $\mu \ll \nu$ it holds 
\\$\rmd \mu P_t/ \rmd \nu= P_t \rmd \mu/\rmd \nu\eqsp.$
Using such properties of $(P_t)_t$ plus Jensen inequality applied to $\psi : \rset^d \times \rset_+ \rightarrow \rset_+\eqsp,  (x, \eta) \mapsto \norm{x}^2/\eta$, which is convex (see e.g.,  \cite[Proposition 2.3]{combettes2017perspective}), plus the fact that $\gamma^d P_t= \gamma^d$ for any $t\in [0,T]$, we infer the thesis: for any $0\le s\le t\le T-\delta$ it holds
\begin{equation}
    \begin{split}
        \gY(s)&= \int \norm{\nabla \log \frac{ \rmd \mustar P_{T-s}}{\rmd \gamma^d}}^2 \rmd \mustar P_{T-s}= \int \norm{\nabla \frac{ \rmd \mustar P_{T-s}}{\rmd \gamma^d}}^2 \frac{1}{ \rmd \mustar P_{T-s}/\rmd \gamma^d} \rmd \gamma^d\\
        &= \int \norm{\nabla P_{t-s} \frac{ \rmd \mustar P_{T-t}}{\rmd \gamma^d}}^2 \frac{1}{ P_{t-s}(\rmd \mustar P_{T-t}/\rmd \gamma^d)} \rmd \gamma^d\\
        &=\rme^{-2(t-s)} \int \norm{ P_{t-s} \nabla \frac{ \rmd \mustar P_{T-t}}{\rmd \gamma^d}}^2 \frac{1}{ P_{t-s}(\rmd \mustar P_{T-t}/\rmd \gamma^d)} \rmd \gamma^d\\
        &= \rme^{-2(t-s)} \int \psi \left( P_{t-s} \nabla \frac{ \rmd \mustar P_{T-t}}{\rmd \gamma^d}, P_{t-s} \frac{ \rmd \mustar P_{T-t}}{\rmd \gamma^d}\right) \rmd \gamma^d\\
        &\le \rme^{-2(t-s)} \int \psi \left( \nabla \frac{ \rmd \mustar P_{T-t}}{\rmd \gamma^d},  \frac{ \rmd \mustar P_{T-t}}{\rmd \gamma^d}\right) \rmd  \gamma^d P_{t-s} \\
        &= \rme^{-2(t-s)} \int \psi \left( \nabla \frac{ \rmd \mustar P_{T-t}}{\rmd \gamma^d},  \frac{ \rmd \mustar P_{T-t}}{\rmd \gamma^d}\right) \rmd  \gamma^d \\
        &=\rme^{-2(t-s)} \int \norm{\nabla \frac{ \rmd \mustar P_{T-t}}{\rmd \gamma^d}}^2 \frac{\rmd \gamma^d}{ \rmd \mustar P_{T-t}/\rmd \gamma^d} \\
        &= \rme^{-2(t-s)} \int \norm{\nabla \log \frac{ \rmd \mustar P_{T-t}}{\rmd \gamma^d}}^2 \rmd \mustar P_{T-t}= \rme^{-2(t-s)}\gY(t)\eqsp.
    \end{split}
\end{equation}

\subsection{Proof of \Cref{null_E}}
\label{sec:proof-crefnull_e}
If we show that for any $s\in [0, T-\delta],$ it holds\\
$
    \;\mathbb{E}[ \norm{Y_s \cdot Z_s}^2 ] <C, 
    $ for some constant $C>0$ independent of time, then, by  Fubini's theorem and \cite[Theorem 7.3]{baldi},
we are done. 

To do so, by the Cauchy-Schwarz inequality, we just need to show that $\mathbb{E}[ \norm{Y_s}^4 ]\eqsp$ and $ \mathbb{E}[ \norm{Z_s}^4 ]$ are bounded from above by constants which are independent of time $s \in \ccint{0,T-\delta}$. To this aim, recall that if $(\foX_t)_t$ is a strong solution of \eqref{FE}, then for any $s\in [0,T]$ the following equality holds in law
\begin{equation}
  \label{eq:OU_prob}
    \foX_s=\foX_0 \rme^{-s} + \sqrt{1-\rme^{-2t}} Z\eqsp, \quad \text{with} \quad Z \sim \gamma^d\eqsp.
\end{equation}Consequently, if $\mustar$ has bounded eighth-order moment, for any $s\in [0,T]$ the variable $\foX_s$ has eighth-order (hence fourth-order) moment uniformly bounded in time. Also, recall that the transition density $q_t(x,y)$ associated to the Ornstein-Uhlenbeck semigroup of \eqref{FE} is given by \eqref{trans_dens_OU}. But then, because of the dominated convergence theorem,  for any $s\in [\delta,T]$ it holds 
\begin{align}
  &  \nabla \log \fom_s(x)= \frac{1}{\fom_s(x)} \int \fom_0(y) \nabla_x q_s(y,x) \rmd y =  \frac{1}{\fom_s(x)} \int \fom_0(y) \frac{x-\rme^{-s}y}{\sigma_t^2} q_s(y,x)\rmd y\\
\label{eq:nabla_log_cond_esp}  
    & \nabla \log \fom_s(\foX_s) = \sigma_s^{-2}\mathbb{E}[\foX_s-\rme^{-s}\foX_0| \foX_s]\eqsp,
    \end{align}
  where $\sigma_s^2 = 1-\rme^{-2s}$.
Putting these results together and using Jensen inequality, we obtain that for any $t\in [0,T-\delta]$ it holds 
\begin{align}
  \norm{\nabla \log \fom_{T-t}(\foX_{T-t})}^4&= \sigma_{T-t}^{-8} \norm{\mathbb{E}[\foX_{T-t}-\rme^{-(T-t)}\foX_0| \foX_{T-t}]}^4\\
  &\le  \sigma_{T-t}^{-8} \mathbb{E}\left[\norm{\foX_{T-s}-\rme^{-(T-t)}\foX_0}^4\Big| \foX_{T-t}\right]\\
       &\lesssim \sigma_{T-t}^{-8}  \mathbb{E}\left[\norm{\foX_{T-t}}^4+ \rme^{-(T-t)}\norm{\foX_0}^4\Big| \foX_{T-t}\right]\eqsp.
    \end{align}
    From which it follows that  $\sup_{t\in[0,T-\delta]} \mathbb{E}[ \norm{Y_t}^4 ]< \plusinfty$.

    Similarly, for any $t\in [0,T-\delta]$ it holds

\begin{align}
    \nabla^2 \log \fom_s(x)&= \frac{1}{\fom_s(x)}\int \fom_0(y) \frac{(x-\rme^{-s}y)\cdot (x-\rme^{-s}y)}{(1-\rme^{-2s})^2} q_s(y,x)\rmd y \\
    &\quad + \frac{1}{\fom_s(x)} \int \fom_0(y) \frac{1}{1-\rme^{-2s}} q_s(y,x)\rmd y  \\
    &\quad - \frac{\nabla \fom_s(x)}{\fom_s(x)^2}\int \fom_0(y) \frac{x-\rme^{-s}y}{(1-\rme^{-2s})^2} q_s(y,x)\rmd y \\
        \nabla^2 \log \fom_s(\foX_s) &= \sigma_s^{-4} \mathbb{E}[(\foX_s-\rme^{-s}\foX_0)\cdot (\foX_s-\rme^{-s}\foX_0) | \foX_s]+1 \\
    &\quad - \sigma_s^{-4}\mathbb{E}[\foX_s-\rme^{-s}\foX_0| \foX_s] \cdot \mathbb{E}[\foX_s-\rme^{-s}\foX_0| \foX_s]\eqsp.
\end{align}
Proceeding as before, we therefore get that $\sup_{t\in[0,T-\delta]} \mathbb{E}[ \norm{Z_t}^4 ]< \plusinfty$ and the proof is completed.

\subsection{Proof of \Cref{theo_exponential_then_linear}}\label{appendix_exponential_case}We preface this proof by the following result, whose proof is postponed to \Cref{sec:proof-crefcd0n}.
\begin{figure}
\centering
\includegraphics[width=0.5\textwidth]{test}
\caption{Example of sequences $(h_k)_{k\in\{1,\ldots,N\}}$ and $(t_k)_{k\in\{1,\ldots,N\}}$ for $T= 4+ 2c$, $c=0.15$ and $a = 1/3$.}
\label{fig:exmapl_stepsize}
\end{figure}

\begin{lemma}\label{CD0N}
It holds
\begin{equation}
\mathbb{E}[\norm{Y_t}^2] \lesssim \frac{d}{1-\rme^{-2(T-t)}} +\Mtt^2_2 +d \eqsp.
\end{equation}
\end{lemma}
We start by giving a more explicit expression for the sequence of step-sizes.
For fixed $T,a,c$, we first define an explicit sequence $({h}_k)_{k\in\{1,\ldots,N\}}$ (and as a result, $({t}_k)_{k\in\{1,\ldots,N\}}$) satisfying for $k < N$, $h_{k+1}= c \min\{ \max\{T-t_k,a\},1\}$.
\begin{enumerate}[label=$\bullet$]
\item Set $k_0=\max\{k \geq 0 \, : \, T- {t}_k \geq 1\}$ and for $k \in \{1,\ldots,k_0\}$ define ${h}_k = c$ and
  ${t}_k = ck$.
\item Set  $k_1=\max\{k \geq 0 \, : \,T-{t}_{k+k_0} \geq a\}$ and for $k \in\{k_0+1,\ldots,k_0+k_1\}$ define
  $h_{k+1} = c(T-t_k)$ and $t_{k+1} = t_k+h_{k+1}$ .
\item Set $k_2 = \max \{ k \geq 0 \, :\, T - {t}_{k+k_0+k_1} \geq 0 \}$ and for $k \in \{k_0+k_1+1,\ldots,k_0+k_1+k_2\}$ define   $h_{k+1} = ca $ and $t_{k+1} = t_k+h_{k+1}$.
\item Finally, set   $N = k_0+k_1+k_2+1$ and define $h_N = T-t_{N-1}$ so that $t_N = T$.  
\end{enumerate}
To summarize
\begin{equation}\label{step_size_dec}
  h_{k+1} = \begin{cases}
T-t_{N-1} \quad & \mbox{$k = N-1$}\\
    c a  \quad & \mbox{$k_0+k_1 \le k \le k_0+k_1+k_2-1$}\\
                    c(T-t_k)  & \mbox{$k_0\leq k\leq k_0+k_1-1$}\\
                    c         & \mbox{$0\leq k\leq k_0-1$} \eqsp.
\end{cases}
\end{equation}
An example of such sequences are provided in \Cref{fig:exmapl_stepsize}.
We show next that $k_0,k_1,k_2$ and $N$ are well-defined but we can already notice that if so, then since $T \geq 1+2c$, distinguishing the four cases, we have  that for $k < N$, $h_{k+1}= c \min\{ \max\{T-t_k,a\},1\}$.
We now show that
\begin{equation}
  \label{eq:bound_k_0}
\begin{aligned}
  &k_0 =  \floor{c^{-1}(T-1)} \eqsp, \quad k_1 = \floor{\log{(a/(T-t_{k_0}))}/\log(1-c)} \lesssim \log(1/a)/c \eqsp, \\
   &  N-k_0-k_1 = k_2+1 \lesssim 1/ c \eqsp.
\end{aligned}
\end{equation}
By definition, $t_k= ck$ for $k \in \{0,\ldots,k_0\}$, and therefore $k_0 = \floorLigne{c^{-1}(T-1)} \geq 2$ since $T \geq 1+2c$, and $T-t_{k_0} \leq 1+c \leq 2$. In addition, for $k \in \{0,\ldots,k_1-1\}$, we have
\begin{equation}
  \label{eq:step_size_dec_2}
  h_{k_0+k+1} = c(T-t_{k_0+k})\eqsp,
\end{equation}
and therefore a simple recursion shows that $h_{k_0+k+1} = c(1-c)^{k}(T-t_{k_0})$ which implies that $k_1 =\max\{k \in \{0,\ldots,N-k_0\}\, : \,T-t_{k+k_0} \ge a\} = \max\{k \in \{0,\ldots,N-k_0\}\, : \, c^{-1}h_{k+k_0+1} \ge a\} = \floor{\log{(a/(T-t_{k_0}))}/\log(1-c)}$. It remains to show the last inequality in \eqref{eq:bound_k_0} which easily follows from \eqref{eq:bound_k_0}-\eqref{eq:step_size_dec_2}, $T-t_{k_0+k_1} \leq a/(1-c) \lesssim a$ and 
\begin{equation}
  \label{eq:6}
  k_0 c + \sum_{k=1}^{k_1} h_{k_0+k} +k_2c a = t_{k_0+k_1} + k_2 c a   \leq T \eqsp.
\end{equation}

The proof follows the same scheme as the proof of \Cref{theo_absolute_OU_just_fisher}, the only difference being the way we handle the error term $E_3$. In this context, this term rewrites as 
\begin{equation}
E_3 \lesssim \sum_{k=0}^{N-1}\int_{t_k}^{t_{k+1}}\mathbb{E}[\norm{Y_t-Y_{t_k}}^2]\rmd t \eqsp.
\end{equation}
Using the same approximation technique as in \Cref{lastmin_lemma}, it is sufficient to consider the case where $\mustar$ admits eighth-order moments.

Arguing on the basis of It\^o's formula exactly as we did in the proof of \Cref{lastmin_lemma} (see \eqref{eq:need_ineq_later_dec_step}) we find that for all $t\in[t_k,t_{k+1}]$
\begin{equation}
\mathbb{E}[\norm{Y_t-Y_{t_k}}^2] \lesssim g(t_{k+1})-g(t_k)
\end{equation}
implying
\begin{equation}\label{E3_exponential}
E_3\lesssim \sum_{k=0}^{N-1}h_{k+1} \{g(t_{k+1})-g(t_k)  \}\eqsp.
\end{equation}
The sum on the RHS can be rewritten as
\begin{align}
  \sum_{k=0}^{N-1}h_{k+1} (g(t_{k+1})-g(t_k)) &= g(T)h_N -g(0)h_1+\sum_{k=1}^{N-1}g(t_k)(h_k-h_{k+1}) \\
  & \qquad \lesssim g(T)h_N+\sum_{k=1}^{N-1}g(t_k)(h_k-h_{k+1}) \eqsp.
\end{align}
Using \eqref{step_size_dec}, $g$ is increasing by \Cref{PMP}, $h_{k_0+k}-h_{k_0+k+1} = c h_{k_0+k}$ by \eqref{eq:step_size_dec_2} for $k \in\{1,\ldots,k_1\}$, we get

\begin{equation}
\begin{split}
g(T)h_N+\sum_{k=1}^{N-1}g(t_k)(h_k-h_{k+1}) &= (c-c(T-t_{k_0+1}))g(t_{k_0})+ c\sum_{k=1}^{k_1}g(t_{k_0+k})h_{k_0+k}\\
&+cg(t_{k_0+k_1})(T-t_{k_0+k_1}-a)+ g(T) h_N\\
&+ g(t_{N-1})(h_{N-1}-h_N)\\
&\lesssim (c-c(T-t_{k_0+1}))g(t_{k_0})+ c\sum_{k=1}^{k_1}g(t_{k_0+k})h_{k_0+k}\\
&+cg(t_{k_0+k_1})(T-t_{k_0+k_1}-a)+ g(T) ca\eqsp.
\end{split}
\end{equation}
Let us now bound all the four terms on the RHS one-by-one. We begin with
\begin{equation}\label{eq:first_term}
\begin{split}
    (c-c(T-t_{k_0+1}))g(t_{k_0}) &\leq cg(t_{k_0})\\
    &\lesssim c g(t_{k_0}) \stackrel{\text{\Cref{CD0N}}}{\lesssim}  c \parenthese{\frac{d}{T-t_{k_0}}+\Mtt^2_2+d } \leq c(d+\Mtt^2_2)\eqsp.
\end{split}
\end{equation}
For the second term, we get by \eqref{step_size_dec} and \eqref{eq:bound_k_0},
\begin{equation}\label{eq:second_term}
\begin{split}
c\sum_{k=1}^{k_1}g(t_{k_0+k})h_{k_0+k} & \stackrel{\text{\Cref{CD0N}}}{\leq} c\sum_{k=1}^{k_1}\parenthese{\frac{d}{T-t_{k_0+k}}+\Mtt^2_2}h_{k_0+k} 
\lesssim c^2(d+\Mtt^2_2)k_1\\
&\lesssim c (d+\Mtt^2_2)\log(1/a)\eqsp .
\end{split}
\end{equation}
Recall that $ T-t_{k_0+k_1}\lesssim a$. As a result, we get 
\begin{equation}\label{eq:third_term}
\begin{split}
cg(t_{k_0+k_1})((T-t_{k_0+k_1})-a) 
&  \stackrel{\text{\Cref{CD0N}}}{\lesssim} c \Big(\frac{d}{T-t_{k_0+k_1}}+\Mtt^2_2+d\Big) (T- t_{k_0+k_1})\\
& \lesssim c(d+\Mtt^2_2)\eqsp.
\end{split}
\end{equation}
Finally, for the last term, we have by \eqref{eq:def_g} and definition of $\Ltt = g(T)/d$,
\begin{equation}\label{eq:fourth_term}
 c a g(T) = ca d\Ltt \eqsp.
\end{equation}
Plugging the bounds \eqref{eq:first_term}-\eqref{eq:second_term}-\eqref{eq:third_term}-\eqref{eq:fourth_term} back into \eqref{E3_exponential} gives \eqref{i-exponential_step}.

To prove the complexity bounds \eqref{TandNfinitefishabserr_exponential_step} it suffices to observe that choosing $T$ as in \eqref{TandNfinitefishabserr_exponential_step} and $$c=\frac{\varepsilon^2}{(d+\Mtt^2_2)\log \Ltt}$$ makes \eqref{i-exponential_step} of order $\tilde{O}(\varepsilon^2)$. To conclude it suffices to observe using \eqref{eq:bound_k_0} that the number of iterations $N$ is given by 

\begin{equation}
N=k_0 +k_1+(N-k_0-k_1) \lesssim \frac{T-1}{c}+\frac{\log(1/a)}{c}+\frac{1}{c} \lesssim \frac{\log(1/a)+T}{c}\eqsp.
\end{equation}

\subsection{Proof of \Cref{CD0N}}
\label{sec:proof-crefcd0n}
By \eqref{eq:nabla_log_cond_esp}, we have 
\begin{equation}
  \nabla \log \fom_s(\foX_s)=\frac{1}{1-e^{-2t}}\mathbb{E}[\foX_s-\rme^{-s}\foX_0| \foX_s]\eqsp,
\end{equation}
whence 
\begin{equation}
\nabla \log \ptilde_s(\foX_s)=\frac{1}{1-\rme^{-2t}}\mathbb{E}[\foX_s-\rme^{-s}\foX_0| \foX_s]+\foX_s \eqsp,
\end{equation}
and by \eqref{eq:OU_prob}
\begin{equation}
\begin{split}
\mathbb{E}[\norm{Y_{t}}^2]&\lesssim \frac{1}{(1-\rme^{-2(T-t)})^2}\mathbb{E}\parentheseDeux{\norm{\mathbb{E}[\foX_{T-t}-\rme^{-(T-t)}\foX_0| \foX_{T-t}]}^2}+2\mathbb{E}[\norm{\foX_{T-t}}^2]\\
&\lesssim \frac{1}{(1-\rme^{-2(T-t)})^2}\mathbb{E}\parentheseDeux{\norm{\foX_{T-t}-\rme^{-(T-t)}\foX_0}^2}+2\Mtt^2_2+2d \\
&=\frac{d}{(1-\rme^{-2(T-t)})} +2\Mtt^2_2 +2d \eqsp.
\end{split}
\end{equation}

\subsection{Proof of \Cref{technical_lemma}}\label{app_technical_lemma}
We start with \eqref{lim_RHS}. It trivially holds for $\Leb$-almost every $x$, $\rme^{-\norm{x}^2/n}\fom_0(x)\to \fom_0(x)$ as $n \to \plusinfty$ and, by the Lebesgue dominated convergence, it holds $\mfrZ_n \to 1$ too. It then follows that $\fom_0^n(x) \to \fom_0(x)$ $\Leb$-almost everywhere
      since
      \begin{equation}
        \label{eq:bound_diif_fom_0}
        \abs{\fom_0^n(x) - \fom_0(x)} \leq \{1-\rme^{-\norm{x}^2/n}\} \fom_0(x) + \abs{1-\mfrZ_n^{-1}} \fom_0(x) \eqsp.
      \end{equation}
       Moreover, since for  $\Leb$-almost every $x$, $\log (       \fom_0^n (x)) = \log\fom_0(x) - \norm{x}^2/n + C\eqsp,$ for some constant $C$, it holds $\nabla \log \left(\rme^{-\norm{x}^2/n}\fom_0(x)\right) = \nabla\log\fom_0(x) -2x/n\eqsp.$ As $2x/n \to 0$ $\Leb$-almost everywhere and in $\rml^2(\gamma^d)$, then $\nabla \log\fom_0^n \to \nabla\log\fom_0$ $\Leb$-almost everywhere and in $\rml^2(\gamma^d)$, hence \eqref{lim_RHS}  holds true.

      We proceed with \eqref{eq:conv_Y_n_Y}. To this end, we show first that: as $n\to +\infty$ we have 
      that for any $t \in (0,T]$ and $x \in\rset^d$, 
      \begin{equation}
        \label{lim_RHS_2}
\fom_t^n(x) \to \fom_t(x) \eqsp, \quad \nabla \log \fom_t^n(x) \to  \nabla \log \fom_t(x)  \eqsp.
      \end{equation}The proof of \eqref{lim_RHS_2} follows the same lines using the Lebesgue dominated convergence theorem, \eqref{eq:bound_diif_fom_0} and that
      \begin{equation}
        \label{eq:5}
        \fom_t^n(x) = \int q_t(y,x) \fom_0^n(y) \rmd y \eqsp, \quad         \fom_t(x) = \int q_t(y,x) \fom_0(y) \rmd y  \eqsp,
      \end{equation}
      and
      \begin{align}
        \label{eq:nabla_log_fom_n}
        \nabla \log   \fom_t^n(x) &= (1/\fom_t^n(x))\int \nabla_x \log q_t(y,x) \fom_0^n(y) q_t(y,x) \rmd y \eqsp,         \\
                \label{eq:nabla_log_fom}
        \nabla \log \fom_t(x) &=  (1/\fom_t(x))\int \nabla_x \log q_t(y,x) \fom_0(y) q_t(y,x) \rmd y \eqsp,
      \end{align}
      where $q_t$  is the transition density associated to the Ornstein-Uhlenbeck semigroup of \eqref{FE}  given by
\begin{equation}\label{trans_dens_OU_0}
q_t(x,y)=\frac{1}{(2\pi (1-\rme^{-2t}))^{d/2}}\exp\parenthese{-\frac{\norm{y-\mathrm{e}^{-t}x}^2}{2(1-\rme^{-2t})}}\eqsp.
\end{equation}

It follows directly from \cite[Theorem 11.3.4]{stroock1997multidimensional} and \eqref{lim_RHS_2} that $(\baX^n_t)_{t \in \ccint{0,T-\delta}} \Rightarrow (\baX_t)_{t \in \ccint{0,T-\delta}}$ as $n \to \plusinfty$ where $\Rightarrow$ denotes the convergence in distribution. We can now conclude the proof of \eqref{eq:conv_Y_n_Y}.

Note that to show \eqref{eq:conv_Y_n_Y}, it is sufficient to show that for any $t \in \ccint{0,T-\delta}$,
\begin{equation}
  \label{eq:conv_Y_n_Y_2}
\lim_{n \to \plusinfty}  \PE[\norm{Y_t^n - Y_t}\wedge 1] = 0 \eqsp.
\end{equation}
First by the triangle inequality, we have
\begin{align}
  \label{eq:8}
 & \PE[\norm{Y_t^n - Y_t}\wedge 1] \lesssim D_1^n + D_2^n \eqsp, \\
  & D_1^n= \expe{\norm{\nabla \log \fom^n_t(\baX_t^n) - \nabla \log \fom_t(\baX_t^n)}\wedge 1} \eqsp,\\
  &D_2^n = \expe{\norm{\nabla \log \fom_t(\baX_t^n) - \nabla \log \fom_t(\baX_t)} \wedge 1} \eqsp.
\end{align}
Using that $(\baX^n_t)_{t \in \ccint{0,T-\delta}} \Rightarrow (\baX_t)_{t \in \ccint{0,T-\delta}}$ as $n \to \plusinfty$ and \Cref{mapm_smooth}, we get that $\lim_{n \to \plusinfty} D_2^n = 0$. Finally, using the Lebesgue dominated convergence theorem, \eqref{lim_RHS}-\eqref{eq:nabla_log_fom_n}-\eqref{eq:nabla_log_fom}, we get  $\lim_{n \to \plusinfty} D_1^n = 0$ which completes the proof of \eqref{eq:conv_Y_n_Y_2}.

\subsection{Proof of \Cref{mapmG_smooth}}
\label{sec:proof-crefm-1}
Recall that given $t\in [0,T]$ and $u_1=(x_1, v_1), u_2=(x_2, v_2) \in \rset^{2d}$ the transition density $q_t(u_1,u_2)$ associated to the kinetic Ornstein-Uhlenbeck semigroup of \eqref{FEG} is given by
\begin{equation}\label{trans_dens_kOU}
q_t(u_1,u_2)=\frac{1}{(2\pi)^{d/2} \det(\Sigma_s)^{1/2}}\exp\parenthese{-\frac{\Sigma_s^{-1/2}(u_2-u_1 \rme^{-\bfA t}) \cdot (u_2-u_1 \rme^{-\bfA t})}{2}}\eqsp,
\end{equation} where 
\begin{equation}\label{obj_kOU}
    \bfA:=\begin{pmatrix}
0 & -\Id_d  \\
\Id_d & 2\Id_d
\end{pmatrix}\eqsp, \quad
 \Sigma_s=  \int_{0}^{s} \rme^{-(s-r)\bfA} \bfSigma \bfSigma^{\transpose} \rme^{-(s-r)\bfA^{\transpose}} \rmd r\eqsp,
\quad \bfSigma=\begin{pmatrix}
0  \\
2\Id_d
\end{pmatrix}\eqsp.
\end{equation}
But then, the proof is almost identical to that of \Cref{mapm_smooth} (just use the smoothness of the coefficients appearing in the Fokker-Planck equation satisfied weakly by $\fom_t(x,v)$, \eqref{trans_dens_kOU}, \cite[Theorem 6.6.1]{bogachev2022fokker} and \cite[Theorem 9.4.3]{bogachev2022fokker}).

\subsection{Proof of \Cref{kinetic_eight_moment}}
Exploiting \eqref{trans_dens_kOU} plus the fact that if \\$(\overrightarrow{U}_t)_t:=(\foX_t, \foV_t)_t$ is a strong solution of \eqref{FEG}, then for any $s\in [0,T]$ the following equality holds in law
\begin{equation}
    \overrightarrow{U}_s=\foX_0 \rme^{-As} + \sqrt{\Sigma_s} \; Z\eqsp, \quad \text{with} \quad Z \sim \gamma^{2d}\eqsp,
\end{equation}with $A$ and $\Sigma_s$ defined in \eqref{obj_kOU}, we can proceed as in the proof of \Cref{null_E} and obtain that for $t\in [0, T-\delta]$, $\mathbb{E}[\nabla_{(x,v)}\log \fom_{T-t}(\foX_{T-t}, \foV_{T-t})]$ and \\$\mathbb{E}[ \nabla^2_{(x,v)}\log \fom_{T-t}(\foX_{T-t}, \foV_{T-t})]$ are uniformly bounded in time. The thesis then follows from Holder inequality, Fubini's theorem and \cite[Theorem 7.3]{baldi}.

\subsection{Proof of \Cref{gup}}
\label{sec:proof-crefgup}
Recall that the transition density\\ $q_t(u_1,u_2)$ associated to the kinetic Ornstein-Uhlenbeck semigroup of \eqref{FEG} is given by
\eqref{trans_dens_kOU}. It then follows from the dominated convergence theorem that if $(P_t)_t$ is the semigroup associated to \eqref{FEG}, then for any $f\in \rml^1(\gamma^{2d})$ and $t\in [0,T]$ it holds
\begin{equation}\label{inequality_PG}
\begin{split}
\norm{\nabla P_t f(u)} &= \norm{\nabla\mathbb{E}[f(u\rme^{-\bfA t} + \sqrt{\Sigma_s}Z)]}= \norm{ \rme^{-\bfA t}}\norm{\mathbb{E}[\nabla f(u\rme^{-\bfA t} + \sqrt{\Sigma_s}Z)]}\\
&\lesssim (t+1) \rme^{-t} \norm{\mathbb{E}[\nabla f(u\rme^{-\bfA t} + \sqrt{\Sigma_s}Z)]}= (t+1) \rme^{-t} \norm{P_t \nabla f(u)}\\
&\lesssim \rme^{-t/4} P_t \norm{\nabla f(u)}\eqsp,
\end{split}
\end{equation}
where we used the fact that $\norm{\bfA \rme^{-\bfA t}}\lesssim (t+1) \rme^{-t} \lesssim \rme^{-t/4}$. But then, using the equivalence between the two norms $(x,v) \mapsto (\norm{x}^2 +\norm{v}^2)^{1/2}$ and $(x,v) \mapsto (\norm{x-v}^2 +\norm{v}^2)^{1/2}$, the proof follows the same lines as the proof of \Cref{gYup} and get the thesis. 
\section{Illustration of \Cref{ass:estimation-relative}} \label{Mean_of_H3}

Let $m\in \rset^d$ be unknown and assume $\mustar$ is the Gaussian distribution with mean $m$ and covariance matrix $\Id$. By taking
\begin{equation}
  \label{eq:7}
  \{s_\theta\}_{\theta\in \rset^{d\times N}}:= \defEns{(t,x) \mapsto -2\sum_{k=1}^{N}(x-\theta_k) \1_{\ocint{t_{k-1},t_{k}}}(t)}_{\theta = (\theta_1,\ldots,\theta_N)\in \rset^{d \times N}}\eqsp,
\end{equation}
we show here that \Cref{ass:estimation-relative} holds with high probability.
Consider $(\foX_{i,t_k})_{i=1,...,N_s}$ \iid \; samples of $\foX_{T-t_k}$, then the empirical risk associated with these samples and with the map
\begin{equation}
  \label{eq:9}
  \theta \mapsto  \mathbb{E} \left[\norm{s_{\theta}(T-t_k,\foX_{t_k})- 2\nabla \log \ptilde_{T-t_{k}}(\foX_{t_k})}^2\right] \eqsp,
\end{equation}
for $k \in 0, ...,N-1$, 
is simply
\begin{equation}
  \label{eq:10}
  \theta \mapsto \frac{4}{N} \sum_{i=1}^{N} \norm{\theta_k - \foX_{i,t_k}}^2 \eqsp.
\end{equation}
As a result, the minimizer is given by the empirical mean
$\theta^\star_{k}:= \sum_{i=1}^{N} \foX_{i,t_k}/N$
and we therefore set $\theta^{\star} = (\theta_1^\star,\ldots,\theta_N^{\star})$.
Recall that if $\{Y_i\}_{i=1}^{N_s}$ are \iid~Gaussian random variables with mean $m$ and covariance matrix $\Id$, with probability $1-\eta$, it holds
\begin{equation}
  \label{eq:11}
  4\norm{\frac{1}{N} \sum_{i=1}^{N} Y_i - m }^2 \leq \varepsilon_\eta^2\eqsp,
\end{equation}where 
\begin{equation}\label{eps_eta}
    \varepsilon_\eta^2:=\frac{8}{N}\log(d/\eta)\eqsp.
\end{equation}
Indeed, denoting by $\{\mathbf{e}_i\}_{i=1}^d$ the canonical basis of $\rset^d$, for any $\lambda>0$, because of Markov inequality, it holds
\begin{equation}
\begin{split}
    &\PP\Big( \norm{\frac{1}{N} \sum_{i=1}^{N} Y_i - m }^2 > \frac{\varepsilon_\eta^2}{4}\Big) \\
    &= \PP\Big( \norm{\frac{1}{N} \sum_{i=1}^{N} Y_i - m } > \frac{\varepsilon_\eta}{2}\Big)\leq \PP \Big(\max_{j=1, ..., d}\Big\{ \mathbf{e}_j^{\transpose} \frac{1}{N} \sum_{i=1}^{N} Y_i - m \Big\}>\frac{\varepsilon_\eta}{2}\Big)\\
    &\leq \sum_{j=1}^{d}\PP \Big(\mathbf{e}_j^{\transpose} \frac{1}{N} \sum_{i=1}^{N} Y_i - m >\frac{\varepsilon_\eta}{2}\Big)\leq \sum_{j=1}^{d} \rme^{-\lambda \frac{\varepsilon_\eta}{2}}\PE\Big[\rme^{\lambda \mathbf{e}_j^{\transpose} \frac{1}{N} \sum_{i=1}^{N}Y_i-m}\Big]\\
    &\leq d \rme^{-\lambda \varepsilon_\eta/2} \rme^{\lambda^2/2N}\eqsp.
\end{split}
\end{equation}

By choosing $\lambda=N\varepsilon_\eta/2  $, one gets \eqref{eq:11}.
This result and the fact that \\$ \mathbb{E} \left[\norm{2\nabla \log \ptilde_{T-t_k}(\foX_{T-t_k})}^2 \middle| (\foX_{i,t_k})_{i=1,...,N} \right] =1$, imply that with probability $1-\eta$, for any $k \in 0, ..., N-1$ it holds
\begin{multline}
  \label{eq:9}
  \mathbb{E} \left[\norm{s_{\thetas}(T-t_k,\foX_{T-t_k})- 2\nabla \log \ptilde_{T-t_{k}}(\foX_{T-t_k})}^2 \middle| (\foX_{i,t_k})_{i=1,...,N} \right] \\
  \leq \varepsilon^2_\eta  \mathbb{E} \left[\norm{2\nabla \log \ptilde_{T-t_k}(\foX_{T-t_k})}^2 \middle| (\foX_{i,t_k})_{i=1,...,N} \right] \eqsp,
\end{multline}with $\varepsilon_\eta^2$ defined in \eqref{eps_eta}.

\bibliographystyle{plain}
\bibliography{mybib.bib}


\hypersetup{
    colorlinks = false,
    linkbordercolor = {white},
    linkcolor = {white},
  }
  \textcolor{white}{\ref{eq:first_term}-\ref{eq:second_term}-\ref{eq:third_term}-\ref{eq:fourth_term} back into \ref{E3_exponential} gives \ref{i-exponential_step}.
    To prove the complexity bounds \eqref{TandNfinitefishabserr_exponential_step} it suffices to observe that choosing $T$ as in \eqref{TandNfinitefishabserr_exponential_step} and       We proceed with \eqref{eq:conv_Y_n_Y}, \eqref{eq:need_ineq_later_dec_step}
}

\end{document}